\documentclass[11pt,reqno]{amsart}

\usepackage[a4paper,margin=1.02in]{geometry}
\usepackage[T1]{fontenc}
\usepackage{lmodern}
\usepackage{amsmath,amssymb,amsthm,mathtools}
\usepackage{enumitem}
\usepackage{microtype}
\usepackage{xcolor,comment}
\usepackage[hidelinks]{hyperref}
\usepackage[nameinlink,noabbrev]{cleveref}

\allowdisplaybreaks
\setlist[enumerate]{leftmargin=2.4em,itemsep=0.25em,topsep=0.35em}
\setlist[itemize]{leftmargin=2.1em,itemsep=0.25em,topsep=0.35em}
\numberwithin{equation}{section}
\newtheorem{theorem}{Theorem}[section]
\newtheorem{proposition}[theorem]{Proposition}
\newtheorem{corollary}[theorem]{Corollary}
\newtheorem{lemma}[theorem]{Lemma}
\newtheorem{assumption}{Assumption}[section]
\theoremstyle{definition}
\newtheorem{definition}{Definition}[section]
\newtheorem{remark}{Remark}[section]

\crefname{assumption}{Assumption}{Assumptions}
\Crefname{assumption}{Assumption}{Assumptions}

\newcommand{\R}{\mathbb R}
\newcommand{\E}{\mathbb E}
\newcommand{\Pp}{\mathbb P}
\newcommand{\1}{\mathbf 1}
\newcommand{\cL}{\mathcal L}
\newcommand{\dd}{\,\mathrm d}
\newcommand{\e}{\mathrm e}
\newcommand{\supp}{\operatorname{supp}}
\newcommand{\doi}[1]{\href{https://doi.org/#1}{doi:#1}}

\def\beqlb{\begin{eqnarray}}\def\eeqlb{\end{eqnarray}}
\def\beqnn{\begin{eqnarray*}}\def\eeqnn{\end{eqnarray*}}
\def\ar{\!\!\!&}

\title[Invariant measures for interacting populations]
{{Invariant Measures for Interacting\\
Continuous-State Population Models}}

\author{Lina Ji}
\address{Faculty of Computational Mathematics and Cybernetics, Shenzhen MSU-BIT University, Shenzhen, China.}
\email{jiln@smbu.edu.cn}

\author{Jie Xiong}
\address{Department of Mathematics and Shenzhen International Center for Mathematics,
	Southern University of Science and Technology, Shenzhen 518055, China.}
\email{xiongj@sustech.edu.cn}

\author{Wen Xu }
\address{School of Mathematical Sciences, Peking University, Beijing 100871, China.}
\email{xuwen@math.pku.edu.cn}

\author{Xu Yang}
\address{School of Mathematics and Information Science, North Minzu University, Yinchuan, China.}
\email{xuyang@mail.bnu.edu.cn}

\thanks{This work was supported by
	the National Key R\&D Program of China (2022YFA1006102),
	the National Natural Science Foundation of China (12471418, 12595294, 12231002, 12301167, 12471135), the New Cornerstone Science Foundation (NCI202501), Guangdong Basic and Applied Basic Research Foundation  (2022A1515110986), and Shenzhen National Science Foundation  (20231128093607001).}
\date{August 8, 2026}

\hypersetup{
  pdftitle={Invariant Measures for Interacting Continuous-State Population Models},
  pdfauthor={Lina Ji, Jie Xiong, Wen Xu and Xu Yang}
}

\begin{document}

\begin{abstract}
	We study invariant probabilities for nonlinear two-population
	branching systems with competition, cooperation, or predator--prey
	interaction, driven by Brownian noise and spectrally positive
	stable jumps. Without immigration, every invariant probability
	of a conservative competitive or predator--prey system is
	concentrated on the boundary under either boundary convention.
	With nontrivial axis dynamics, coordinatewise continuation
	admits only the point mass at the origin.
	For cooperative systems, we give criteria for boundary
	concentration and interior stationarity, including a sharp
	criterion for bilinear cooperation with a common relative noise
	exponent. With immigration, explicit parameter conditions yield
	existence and stationary moment bounds.
	For square-root diffusions with positive immigration, these
	conditions ensure interior concentration and full support on
	the closed quadrant. Further lower bounds on immigration rates
	give uniqueness and convergence of time-averaged transition laws.
 
\end{abstract}

\subjclass[2020]{Primary 60J60, 60J80; Secondary 60J75, 60G51, 60J25}
\keywords{{nonlinear continuous-state branching process;
		competition; cooperation; predator--prey interaction;
		invariant probability; immigration; stationary coexistence;
		stable jumps}}

\maketitle

\section{Introduction and main results}

Continuous-state branching processes, introduced by
Ji\v{r}ina~\cite{Jirina1958}, describe the evolution of population
mass under independent reproduction and arise as scaling limits
of discrete branching processes; see Lamperti~\cite{Lamperti1967}.
In the subcritical regime, a continuous-state branching process converges to zero almost surely.
Continuous-state branching processes with immigration
were first studied by Kawazu and Watanabe~\cite{KawazuWatanabe1971}.
Dawson and Li~\cite{DawsonLi2006} provided constructions of continuous-state branching processes with immigration via stochastic equations as pathwise unique strong solutions, thereby establishing a framework for incorporating nonlinear rates and interactions.
The stationary behavior of continuous-state branching processes with immigration was investigated by
Pinsky~\cite{Pinsky1972} and further developed by Keller-Ressel
and Mijatovi\'c~\cite{KellerResselMijatovic2012}.
In the strictly subcritical case, the existence of a finite logarithmic moment of the immigration Lévy measure ensures convergence in distribution to a unique invariant probability; see Keller-Ressel and Mijatovi\'c \cite{KellerResselMijatovic2012}.

Stationary distributions have also been studied for one-dimensional
nonlinear branching models, which generally lack the classical
branching property.
Under suitable dissipativity and noise assumptions, Li and
Wang~\cite{LiWang2020} established existence and uniqueness of
an invariant probability and exponential convergence in the
\(L^1\)-Wasserstein distance and the total variation norm.
For continuous-state branching processes with immigration and
competition, Li et al.~\cite{LiLiWangZhou2025} proved exponential
ergodicity in a weighted total variation distance under
appropriate conditions. 
Chen et al.~\cite{ChenFangJiWang2026} established exponential
ergodicity in a weighted total variation distance for models
with immigration in L\'evy random environments, competition,
and catastrophes, under conditions balancing the branching,
environmental, and regulatory terms.

 Related results are available for stochastic
	Lotka--Volterra systems driven by environmental noise.
	Nguyen and Yin~\cite{NguyenYin2017} established conditions
	for a unique interior invariant probability and
	convergence in total variation for competitive systems.
	For mutualistic systems, Ji and
	Jiang~\cite{JiJiang2012} proved the existence of an
	ergodic stationary distribution under suitable conditions.
	For predator--prey models with a Beddington--DeAngelis
	functional response, Du et al.~\cite{DuNguyenYin2016}
	obtained conditions for a unique interior invariant
	probability and convergence in total variation.
	In the broader setting of stochastic Kolmogorov systems,
	Hening and Nguyen~\cite{HeningNguyen2018} developed
	criteria for coexistence and extinction, including
	sufficient conditions for a unique interior invariant
	probability and exponential convergence in total variation. Bao and Shao~\cite{BaoShao2016} established conditions
	for extinction, permanence, and the existence of stationary
	distributions in regime-switching predator--prey models.
We refer the reader to Bena\"{\i}m and Lobry \cite{BenaimLobry2016} and Hening et al. \cite{HeningNguyenNguyenWatts2023}
for related studies of stochastic
Lotka--Volterra systems
under random environmental switching
	and Feng, et al.~\cite{FengGrigorescuQuastel2004}
	for spatial predator--prey interactions.

In this paper, we study stationary behavior in two-dimensional
nonlinear branching systems with Lotka--Volterra-type interactions, both with and
without immigration. Depending on the signs of the
interaction coefficients, these systems describe competition,
cooperation, or predator--prey interaction.
Let \((\Omega,\mathcal F,\Pp)\) be a complete probability space. For $i = 1, 2$, let
\(B_i=(B_i(t))_{t\ge0}\) be Brownian motions and let
\(N_i(\dd s,\dd z,\dd u)\) be Poisson random measures on
\(\R_+\times(0,\infty)^2\) with intensities
\(\dd s\,\mu_i(\dd z)\,\dd u\), where
\[
\mu_i(\dd z)=
\frac{\alpha_i(\alpha_i-1)}
{\Gamma(\alpha_i)\Gamma(2-\alpha_i)}
z^{-1-\alpha_i}\1_{\{z>0\}}\dd z, \qquad \alpha_i\in(1,2).
\]
Let \(\widetilde N_i\) be the compensated measures of $N_i$.  Assume that
\(B_1,B_2,N_1,N_2\) are mutually independent, and let
\((\mathcal F_t)_{t\ge0}\) be the usual augmentation of the filtration
generated by these noises. 
We consider the following system:
\begin{equation}
	\left\{
	\begin{aligned}
		X_t={}&{x_0}+
		\int_0^t\bigl(\iota_1+a_1 X_s^{\theta_1}Y_s^{\kappa_1}-b_{10}X_s^{r_{10}}\bigr)\dd s
		+\int_0^t\sqrt{2b_{11}X_s^{r_{11}}}\dd B_1(s)
		\\
		&+\int_0^t\int_0^\infty\int_0^{b_{12}X_{s-}^{r_{12}}}
		z\,\widetilde N_1(\dd s,\dd z,\dd u),\\
		Y_t={}&{y_0}+
		\int_0^t\bigl(\iota_2+a_2 Y_s^{\theta_2}X_s^{\kappa_2}-b_{20}Y_s^{r_{20}}\bigr)\dd s
		+\int_0^t\sqrt{2b_{21}Y_s^{r_{21}}}\dd B_2(s)
		\\
		&+\int_0^t\int_0^\infty\int_0^{b_{22}Y_{s-}^{r_{22}}}
		z\,\widetilde N_2(\dd s,\dd z,\dd u).
	\end{aligned}
	\right.
	\label{eq:model}
\end{equation}
Here \((x_0,y_0)\in (0, \infty)^2\), \(a_i\in\R\), \(\kappa_i>0\), and
\(\iota_i,\theta_i,r_{ij},b_{ij}\ge0\), for \(i=1,2\) and
\(j=0,1,2\). The cases {\(a_1>0>a_2\)}, {\(a_1,a_2<0\)}, and
{\(a_1,a_2>0\)} describe predator-prey interaction, competition,
and cooperation, respectively. The constants \(\iota_i\) are
the immigration rates. If \(a_i=0\), the corresponding interaction
term is absent; whenever \(b_{ij}=0\), the corresponding drift
or noise term is omitted. A broader formulation allows interaction terms \(a_i f_i(x,y)\),
with nonnegative continuous \(f_i\), nonnegative coefficient
functions \(\gamma_{i,j}\) in the branching dynamics, and general
L\'evy measures for the jumps, under suitable well-posedness
assumptions. To simplify the presentation and obtain explicit
parameter conditions, we consider power coefficients and stable
L\'evy densities in the intensities of the driving Poisson random
measures.

 Extinction and boundary behavior
of related nonlinear branching systems were studied by
Ren et al.~\cite{RenXiongYangZhou2022},
Xiong et al.~\cite{XiongYangZhouCompeting,XiongYangZhouEnhancing},
and Ji et al.~\cite{JiXiongXuYang2026}.
For competitive or weakly cooperative Lotka--Volterra
diffusions, Cattiaux and M\'el\'eard~\cite{CattiauxMeleard2010}
established, under a balance condition, a unique
quasi-stationary distribution for the process killed when
either population becomes extinct.
Chen et al.~\cite{ChenLiWang2026} proved existence
and uniqueness of an invariant probability and uniform
exponential convergence in total variation for a branching
system with positive immigration and unidirectional
interactions, under
suitable conditions. 
Our main results concern invariant probabilities for bidirectionally
interacting systems with nonlinear branching rates, including
the distinction between boundary and interior stationarity
in the absence of immigration.

Write \(E=[0,\infty)^2\), \(E^\circ=(0,\infty)^2\), and
\(\partial E=E\setminus E^\circ\). Let \(B_b(E)\) and \(C_b(E)\) denote the spaces of bounded
Borel measurable and bounded continuous real-valued
functions on \(E\), respectively.
We write \(Z_t=(X_t,Y_t)\) for the population state.
For each Markov realization considered below, write
\(\Pp_{x_0,y_0}\) and \(\E_{x_0,y_0}\) for its law and
expectation when started from \((x_0,y_0)\in E\).
For an initial distribution \(\pi\), the corresponding
law and expectation are denoted by \(\Pp_\pi\) and
\(\E_\pi\). Functions on \(E\) are extended by zero
at the cemetery point whenever the process is killed.

We now summarize the main results.
Without immigration, we distinguish between freezing the
system at its first boundary hit and allowing the surviving
population to continue along its axis
(see \Cref{def:absorbed,def:coordinatewise}).
For conservative competitive and predator--prey systems,
every invariant probability is concentrated on \(\partial E\)
(see \Cref{thm:noimm-empirical,thm:competitive-noimm}).
Under coordinatewise continuation and nontrivial axis
dynamics, \(\delta_{(0,0)}\) is the unique invariant probability.
For cooperative systems, we give sufficient conditions
for boundary concentration and for the existence of a
nondegenerate invariant probability
(see \Cref{thm:cooperative-noimm-boundary,thm:cooperative-interior}). A highlight of the paper is \Cref{thm:cooperative-sharp},
which gives a necessary and sufficient criterion for
interior stationarity under bilinear cooperation with
a common relative noise exponent. This criterion includes the critical case, where the
interaction strengths and branching noise parameters
determine whether stationary coexistence is possible.

For models with immigration, we give explicit parameter
conditions ensuring conservativeness, existence of invariant
probabilities, and stationary moment bounds in all three
interaction regimes
(see \Cref{thm:imm-existence}).
Under additional restrictions on the noise exponents,
we obtain exact stationary balance identities
(see \Cref{cor:stationary-balance}).
For square-root diffusions satisfying these conditions,
positive immigration ensures that every invariant
probability is concentrated on \(E^\circ\) and has full
support on \(E\).
Further lower bounds on the immigration rates yield
uniqueness and weak convergence of the time-averaged
transition laws
(see \Cref{cor:sqrt-main}).

The proofs combine test functions, martingale arguments,
and boundary-hitting estimates.
Without immigration, occupation estimates follow from
powers chosen to cancel the interaction terms in
predator--prey systems and from the supermartingale
property of the total population in competitive systems.
For cooperative systems, we construct Lyapunov functions
combining negative powers near the boundary with a
weighted linear term at infinity.
With immigration, weighted linear Lyapunov functions
provide occupation estimates and stationary moment bounds.
The existence proofs combine the resulting tightness
estimates with the weak Feller property, established
through localization.
The critical criterion for bilinear cooperation further
uses stationary identities justified by concave truncation.
For square-root diffusions, the analysis also uses
localized comparison with Cox--Ingersoll--Ross diffusions,
interior ellipticity, and the support theorem to establish
boundary behavior, full support, and uniqueness.

\paragraph{\textbf{Notation.}}
{ For \(a,b\in\R\), write \(a\wedge b:=\min\{a,b\}\),
\(a\vee b:=\max\{a,b\}\), and \(a^+:=a\vee0\).
For an open set \(U\subset\R^2\), let \(C^2(U)\) denote the space
of twice continuously differentiable real-valued functions on \(U\).
Its subspace \(C_b^2(U)\) consists of functions that, together with
their first- and second-order partial derivatives, are bounded.
Similarly, \(C_b^\infty(U)\) denotes the space of smooth functions
whose derivatives of every order are bounded.
For vector- or matrix-valued functions, these conventions apply
componentwise.}
For sets \(A,D\subset\R^2\), the notation \(A\Subset D\) means that
\(\overline A\) is a compact subset of \(D\).  We use \(|\cdot|\) for
the Euclidean norm (and for absolute value on \(\R\)), write
{\(B(q,r):=\{w\in\R^2:|w-q|<r\}\)} for the open Euclidean ball with center \(q\) and radius
\(r\), and set
\[
 \operatorname{dist}(A,D):=\inf\{|a-d|:a\in A,\ d\in D\}.
\]
For a Borel measure \(\nu\), \(\supp\nu\) denotes its support relative
to the underlying state space. For probability measures \(\nu_n\) and \(\nu\) on \(E\),
we write \(\nu_n\Rightarrow\nu\) to denote weak convergence.

\subsection{Models without immigration}
\label{sec:noimm-results}

Throughout this subsection, set \(\iota_1=\iota_2=0\) in \eqref{eq:model}.  All passage times below are taken over the interval on
which the solution is defined, with
\(\inf\varnothing:=\infty\).
For \(R>0\), let
\[
\tau_R^-:=\inf\{t\ge0:X_t\wedge Y_t\le R\},
\qquad
\tau_R^+:=\inf\{t\ge0:X_t\vee Y_t\ge R\},
\]
and define the boundary and explosion times by
\[
\tau_0:=\lim_{R\downarrow0}\tau_R^-,
\qquad
\tau_\infty:=\lim_{R\to\infty}\tau_R^+,
\]
respectively.
For every \((x_0,y_0)\in E^\circ\), \eqref{eq:model}
admits a pathwise unique strong c\`adl\`ag solution on
\([0,\tau_0\wedge\tau_\infty)\), as follows from the
localization argument in the proof of
\Cref{prop:axis-realization}.
Boundary initial states and continuation on
\(\{\tau_0<\tau_\infty\}\) are specified below.

The model
\eqref{eq:model} is called \emph{source-conservative}
if
\begin{equation*}
 {\Pp_{x_0,y_0}(\tau_\infty<\infty,\ \tau_\infty\le\tau_0)=0,
 \qquad (x_0,y_0)\in E^\circ.}
\end{equation*}
On \(\{\tau_0<\tau_\infty\}\), define the boundary value by
\[
 Z_{\tau_0}:=\lim_{t\uparrow\tau_0}Z_t\in\partial E.
\]
Now we consider the following two boundary conventions,
which may lead to different classes of invariant probabilities.

The coordinatewise continuation requires the coefficients attached to an
extinct coordinate to vanish at that coordinate.  For the active coefficients we impose
\begin{equation}
 \theta_1,\theta_2>0,
 \qquad
 r_{ij}>0\quad\text{whenever }b_{ij}>0,
 \quad i=1,2,\quad j=0,1,2.
 \label{eq:boundary-compatible}
\end{equation}
Thus every active coefficient attached to an extinct coordinate vanishes at
zero, and no convention for \(0^0\) is needed. By \Cref{prop:axis-realization}, the following coordinatewise
	construction is well defined and yields
	a strong Markov process.

\begin{definition}
	\label{def:coordinatewise}
	Assume \eqref{eq:boundary-compatible} holds.
	We call \(Z=(Z_t)_{t\ge0}\), with \(Z_t=(X_t,Y_t)\),
	a \emph{coordinatewise process} (CWP)
	if it satisfies the following:
	Before \(\tau_0\wedge\tau_\infty\), it follows
	\eqref{eq:model}. On \(\{\tau_0<\tau_\infty\}\),
	if \(Y\) reaches zero first, then \(Y\) remains zero and
	the surviving coordinate solves
	\[
	\begin{aligned}
		X_t={}&X_{\tau_0}
		-\int_{\tau_0}^t b_{10}X_s^{r_{10}}\dd s
		+\int_{\tau_0}^t\sqrt{2b_{11}X_s^{r_{11}}}\dd B_1(s)\\
		&+\int_{\tau_0}^t\int_0^\infty
		\int_0^{b_{12}X_{s-}^{r_{12}}}
		z\,\widetilde N_1(\dd s,\dd z,\dd u).
	\end{aligned}
	\]
	If \(X\) reaches zero first, then \(X\) remains zero and
	the surviving coordinate solves
	\[
	\begin{aligned}
		Y_t={}&Y_{\tau_0}
		-\int_{\tau_0}^t b_{20}Y_s^{r_{20}}\dd s
		+\int_{\tau_0}^t\sqrt{2b_{21}Y_s^{r_{21}}}\dd B_2(s)\\
		&+\int_{\tau_0}^t\int_0^\infty
		\int_0^{b_{22}Y_{s-}^{r_{22}}}
		z\,\widetilde N_2(\dd s,\dd z,\dd u).
	\end{aligned}
	\]
	If both coordinates are zero, the process remains at the origin.
	On \(\{\tau_\infty\le\tau_0\}\), it is killed at
	\(\tau_\infty\). For a boundary initial state, set
	\(\tau_0=0\) and start the corresponding axis dynamics
	at time zero.
\end{definition}

\begin{definition}
	\label{def:absorbed}
We call \(Z^{\tau_0}=(Z_t^{\tau_0})_{t\ge0}\)
a \emph{source-stopped process} (SSP)
if it satisfies the following: for an initial state in
\(E^\circ\), it follows \eqref{eq:model} up to
\(\tau_0\wedge\tau_\infty\), remains at \(Z_{\tau_0}\)
after \(\tau_0\) on \(\{\tau_0<\tau_\infty\}\), and is
killed at \(\tau_\infty\) on \(\{\tau_\infty\le\tau_0\}\).
For an initial state \((x_0,y_0)\in\partial E\), set
\(\tau_0=0\) and \(Z_t^{\tau_0}=(x_0,y_0)\) for all
\(t\ge0\).
	\(Z_t^{\tau_0}=(x_0,y_0)\) for all \(t\ge0\).
\end{definition}

{ \begin{remark}
		Definition \ref{def:absorbed} does not require
	\eqref{eq:boundary-compatible}. When \eqref{eq:boundary-compatible} holds,
	\[
	Z_t^{\tau_0}=Z_{t\wedge\tau_0},
	\qquad t\ge0,
	\]
	where the CWP \(Z\) is the continuation in \Cref{def:coordinatewise},
	with the same killing convention.
\end{remark}
}

Denote the transition semigroups of the CWP \(Z\) and the SSP \(Z^{\tau_0}\)
by \((P_t)_{t\ge0}\) and \((P_t^{\tau_0})_{t\ge0}\),
respectively. Thus, for \((x_0,y_0)\in E\) and \(f\in B_b(E)\),
\[
 P_tf(x_0,y_0)=\E_{x_0,y_0}[f(Z_t)],
 \qquad
 P_t^{\tau_0}f(x_0,y_0)=\E_{x_0,y_0}[f(Z_t^{\tau_0})].
\]
In this subsection, \(P_t\) is used only when the CWP is defined. For \(T\ge1\), define the
corresponding time-averaged transition laws by
\[
 \Pi_T^{(x_0,y_0)}
 :=\frac1T\int_0^T P_t((x_0,y_0),\cdot)\dd t,
 \qquad
 \Pi_T^{\tau_0,(x_0,y_0)}
 :=\frac1T\int_0^T P_t^{\tau_0}((x_0,y_0),\cdot)\dd t.
\]

For any transition semigroup \((Q_t)_{t\ge0}\) considered
in this paper, its realization is called
\emph{conservative} if \(Q_t1=1\) for every \(t\ge0\),
equivalently, if the process is nonexplosive from every
initial state. The semigroup is called \emph{weak Feller}
if \(Q_tf\in C_b(E)\) for every \(t\ge0\) and \(f\in C_b(E)\).
A probability measure \(\pi\) on \(E\) is \emph{invariant}
if
\[
 \int_E Q_tf(x,y)\,\pi(\dd x,\dd y)
 =\int_E f(x,y)\,\pi(\dd x,\dd y),
 \qquad t\ge0,\quad f\in B_b(E).
\]
An invariant probability \(\pi\) is called
\emph{nondegenerate} if \(\pi(E^\circ)=1\).

Set
\beqlb\label{eqq}
 q_1:=1+\kappa_2-\theta_1,
 \qquad
 q_2:=1+\kappa_1-\theta_2.
\eeqlb

\begin{assumption}
\label{ass:dissipative}
{Assume that {\(a_1>0>a_2\)}.}
{For each \(i=1,2\), assume that \(0<q_i<\alpha_i\),
\(b_{i0}>0\), \(r_{i0}>1\), and}
\begin{equation}
	{
		{\left\{
		\begin{aligned}
			r_{i0}-1&>r_{i1}-2 &&\text{if }b_{i1}>0,\\
			r_{i0}-1&>r_{i2}-\alpha_i &&\text{if }b_{i2}>0.
		\end{aligned}
		\right.}
	}
	\label{eq:dissipative-dominance}
\end{equation}
If \(q_i>1\), assume in addition that
\beqnn
 q_i+r_{i1}-2\ge0\quad\text{if }b_{i1}>0,
 \qquad
 q_i+r_{i2}-\alpha_i\ge0\quad\text{if }b_{i2}>0.
\eeqnn 
\end{assumption}

\Cref{ass:dissipative} concerns the predator--prey regime
{\(a_1>0>a_2\)}. Its hypotheses coincide with those of
\cite[Theorem~1.2]{JiXiongXuYang2026} when the positive
suppression coefficient in that paper is replaced by \(-a_2>0\).
We use that result only for the SSP in the predator--prey regime. The conclusion of CWP also requires the axis
estimate below.

\begin{theorem}
\label{thm:noimm-empirical}
Assume that \(\iota_1=\iota_2=0\) and
\Cref{ass:dissipative} holds.
\begin{enumerate}[label=\textup{(\roman*)}]
\item A probability \(\pi\) is invariant for the SSP \(Z^{\tau_0}\) if and only if \(\pi(\partial E)=1\).
\item If \eqref{eq:boundary-compatible} holds, then
\(\delta_{(0,0)}\) is the unique invariant probability for
the CWP \(Z\). If, in addition,
\((P_t)_{t\ge0}\) is weak Feller, then for every $(x_0,y_0)\in E$,
\[
 \Pi_T^{(x_0,y_0)}\Rightarrow\delta_{(0,0)}
 \qquad\text{as }T\to\infty.
\]
\end{enumerate}
\end{theorem}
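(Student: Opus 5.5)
The argument rests on two ingredients. The first is the boundary-hitting result \cite[Theorem~1.2]{JiXiongXuYang2026}, whose hypotheses are exactly \Cref{ass:dissipative} once the suppression coefficient there is replaced by $-a_2$. For every $(x_0,y_0)\in E^\circ$ it gives $\Pp_{x_0,y_0}(\tau_0<\infty)=1$ with no explosion before $\tau_0$, so the model is source-conservative. The second is a one-dimensional axis estimate: under $b_{i0}>0$, $r_{i0}>1$ and the dominance condition \eqref{eq:dissipative-dominance}, each axis diffusion hits zero in finite time almost surely and does not explode.

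For (i), sufficiency is immediate, since every point of $\partial E$ is a fixed point of the SSP. For necessity, let $\pi$ be invariant and take $f=\1_{E^\circ}$. Invariance gives
\[
\pi(E^\circ)=\int_E \Pp_{x,y}\bigl(Z^{\tau_0}_t\in E^\circ\bigr)\,\pi(\dd x,\dd y)=\int_{E^\circ}\Pp_{x,y}(\tau_0>t)\,\pi(\dd x,\dd y),
\]
where the second equality holds because boundary states stay on the boundary and, by source-conservativeness, killing cannot occur before $\tau_0$. Letting $t\to\infty$ and using dominated convergence together with $\tau_0<\infty$ a.s. shows $\pi(E^\circ)=0$. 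Conservativeness of the SSP follows from the same fact, so the invariant measure is a genuine probability on $\partial E$.

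For the first claim of (ii), I would prove the axis estimate using the test function $V(x)=x^{p}$ with a small $p\in(0,1)$, or $\log(1+x)$. With this choice the drift $-b_{10}x^{r_{10}}$ dominates the noise contributions $x^{r_{11}-2+p}$ and $x^{r_{12}-\alpha_1+p}$ at infinity, which gives nonexplosion. Near zero I would use the negative drift and the vanishing coefficients (which hold by \eqref{eq:boundary-compatible}), together with a Feller-type comparison. An alternative near zero is that $x$ is a supermartingale and, since $r_{10}>1$, a strictly positive drift lower bound on $\{x\ge\varepsilon\}$ forces the process to reach $[0,\varepsilon)$; it must then be absorbed at $0$ with positive probability uniformly, and an iteration gives $\Pp(\text{hit }0)=1$. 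I expect this near-zero absorption argument to be the main obstacle: when $r_{10}$ is large the drift is negligible near $0$, so absorption must come from the noise, and the exponents $r_{11},r_{12}$ need not make $0$ accessible. If that approach fails, I would try the Lyapunov function $V(x)=x^{q}$ with $q<1$ adapted to the exponents, possibly borrowing the corresponding one-dimensional estimate from \cite{JiXiongXuYang2026}.

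Granting the axis estimate, the strong Markov property from \Cref{prop:axis-realization} shows that from every state in $E$ the CWP reaches $(0,0)$ in finite time almost surely and stays there. Running the argument of (i) with $f=\1_{E\setminus\{(0,0)\}}$ then gives $\pi(\{(0,0)\})=1$. Conversely, $\delta_{(0,0)}$ is invariant because the origin is absorbing.

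For the convergence statement in (ii), let $f\in C_b(E)$. Then $P_tf(x_0,y_0)=\E[f(Z_t)]\to f(0,0)$ by bounded convergence, because $Z_t=(0,0)$ for all $t$ beyond an almost surely finite time. The Ces\`aro averages therefore converge as well, which gives $\Pi_T^{(x_0,y_0)}\Rightarrow\delta_{(0,0)}$. This direct route does not actually use the weak Feller property. If one prefers a route that does, tightness of $\{\Pi_T\}$ follows from the almost sure absorption, the Krylov--Bogoliubov argument with the weak Feller property shows that every limit point is invariant, and uniqueness of the invariant probability then identifies the limit as $\delta_{(0,0)}$.
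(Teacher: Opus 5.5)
Your argument depends on finite-time absorption: $\Pp_{x_0,y_0}(\tau_0<\infty)=1$ in the interior, and finite-time hitting of $0$ on each axis. Under \Cref{ass:dissipative} neither holds in general. The paper draws only conservativeness of the SSP and the occupation bound \eqref{eq:source-occupation} from the cited result.

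Here is a counterexample. Take $\theta_i=\kappa_i=1$, $r_{10}=r_{20}=2$ and $b_{i1}=b_{i2}=0$. Then $q_1=q_2=1$, and \Cref{ass:dissipative} and \eqref{eq:boundary-compatible} both hold. The system becomes $\dot x=x(a_1y-b_{10}x)$, $\dot y=y(a_2x-b_{20}y)$. Since $\dot x/x$ and $\dot y/y$ are bounded on bounded sets, neither coordinate reaches $0$ in finite time. On an axis, $\dot x=-b_{10}x^{r_{10}}$ with $r_{10}>1$ never reaches $0$ either. The same happens with multiplicative noise $r_{11}=r_{21}=2$, since $\log X$ and $\log Y$ then have locally bounded drift.

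In such cases $\Pp_{x,y}(\tau_0>t)=1$ for all $t$. Your identity $\pi(E^\circ)=\int_{E^\circ}\Pp_{x,y}(\tau_0>t)\,\pi(\dd x,\dd y)$ then gives no information, and the near-zero ``iteration'' cannot work. Both routes to $\Pi_T^{(x_0,y_0)}\Rightarrow\delta_{(0,0)}$ also collapse: the direct one (via $Z_t=(0,0)$ after a finite time) and the one that gets tightness from absorption. The obstruction you flag on the axis is real, and it already defeats part (i).

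What is missing is a way to exclude interior (and axis) mass without hitting. The paper uses $f(x,y)=\e^{-y}$. Because $a_2<0$,
\[
\cL f(x,y)=\e^{-y}\Bigl[-a_2y^{\theta_2}x^{\kappa_2}+b_{20}y^{r_{20}}+b_{21}y^{r_{21}}+\tfrac{b_{22}}{\Gamma(\alpha_2)}y^{r_{22}}\Bigr]>0\qquad\text{on }E^\circ .
\]
By localization, $Q_tf\ge f$ on $E$ and $Q_tf>f$ on $E^\circ$ for $t>0$. Then $\int(Q_tf-f)\,\dd\pi=0$ forces $\pi(E^\circ)=0$, whether or not the boundary is ever reached.

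For the CWP, $\cL f(0,y)>0$ on the open $y$-axis (since $b_{20}>0$), which removes mass there. Applying $g=\e^{-x}$ on the $x$-axis then leaves $\delta_{(0,0)}$.

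For the time averages, tightness comes from the Foster--Lyapunov bound $\E H(Z_t)+c\,\E\int_0^tW(Z_s)\dd s\le H(x_0,y_0)+Ct$. This bound is extended past $\tau_0$ by computing $\cL H$ on the axes, which is where the extra conditions for $q_i>1$ enter. The weak Feller hypothesis is then genuinely needed to identify limit points as invariant. Uniqueness of the invariant probability identifies the limit as $\delta_{(0,0)}$.
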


For a related competitive model, nonexplosion was established
in \cite[Lemma~1(i)]{XiongYangZhouCompeting} under the restrictions
\(r_{i0}\ge1\), \(r_{i1}\ge2\), and \(r_{i2}\ge\alpha_i\)
with \(b_{i1}+b_{i2}>0\) for
\(i=1,2\).
Under \eqref{eq:boundary-compatible}, the following theorem
establishes nonexplosion and tightness of the time-averaged
transition laws under both boundary conventions, without the above restrictions on \(r_{ij}\) or the assumption
\(b_{i1}+b_{i2}>0\).

\begin{theorem}
\label{thm:competitive-noimm}
Assume that \(\iota_1=\iota_2=0\), \(a_1,a_2<0\), and
\eqref{eq:boundary-compatible} holds.
A probability \(\pi\) is invariant for the SSP \(Z^{\tau_0}\) if and only if \(\pi(\partial E)=1\), and every
invariant probability for the CWP \(Z\) is
concentrated on \(\partial E\).
If, in addition,
\begin{equation}
 b_{10}+b_{11}+b_{12}>0,
 \qquad b_{20}+b_{21}+b_{22}>0,
 \label{eq:axis-nontrivial}
\end{equation}
then \(\delta_{(0,0)}\) is the unique invariant probability
for the CWP \(Z\). If \((P_t)_{t\ge0}\) is also
weak Feller, then for $(x_0,y_0)\in E$,
\[
 \Pi_T^{(x_0,y_0)}\Rightarrow\delta_{(0,0)}
 \qquad\text{as }T\to\infty.
\]
\end{theorem}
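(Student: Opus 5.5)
The plan is to use the total mass $S_t:=X_t+Y_t$ as a nonnegative supermartingale. This gives nonexplosion and an occupation estimate showing the process cannot spend a positive fraction of time in compact subsets of $E^\circ$. On the axes, the one-dimensional dynamics are then used to reduce everything to the origin.

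\textbf{Step 1 (nonexplosion).} With $\iota_i=0$ and $a_i<0$, the drift of $S$ before $\tau_0\wedge\tau_\infty$ is $a_1X^{\theta_1}Y^{\kappa_1}+a_2Y^{\theta_2}X^{\kappa_2}-b_{10}X^{r_{10}}-b_{20}Y^{r_{20}}\le0$. The martingale parts (Brownian and compensated jump integrals) are local martingales. Localizing at $\tau_R^+\wedge\tau_\epsilon^-$, we obtain $\E[S_{t\wedge\tau_R^+\wedge\tau_\epsilon^-}]\le x_0+y_0$. Since at $\tau_R^+$ we have $S\ge R$, this gives $\Pp(\tau_R^+\le t\wedge\tau_\epsilon^-)\le (x_0+y_0)/R$. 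Letting $\epsilon\downarrow0$ and then $R\to\infty$ yields source-conservativeness. After $\tau_0$, the axis dynamics also have nonpositive drift, so the same argument (or \Cref{prop:axis-realization}) gives conservativeness of both the SSP and the CWP. In particular, $S$ is a nonnegative càdlàg supermartingale for both processes.

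\textbf{Step 2 (no mass in $E^\circ$).} Fix a compact $K\Subset E^\circ$. The function $g:=-(a_1x^{\theta_1}y^{\kappa_1}+a_2y^{\theta_2}x^{\kappa_2})$ satisfies $g\ge c_K>0$ on $K$. Before $\tau_0$, the compensator of $S$ dominates $\int g(Z_s)\dd s$, so
\[
\E\Bigl[\int_0^{t\wedge\tau_0}g(Z_s)\dd s\Bigr]\le x_0+y_0 ,
\]
by localization and Fatou. For the CWP, $Z$ stays in $\partial E$ after $\tau_0$, and for the SSP it is frozen there. Hence in both cases $\int_0^\infty\1_K(Z_s)\dd s$ has expectation at most $(x_0+y_0)/c_K$.

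For an invariant $\pi$, integrate $\1_K$ against $\pi$ on $[0,T]$ and divide by $T$. This needs a bounded initial mass, so truncate: restrict $\pi$ to $\{x+y\le M\}$ and use $P_t\1_K\le1$ on the rest. This gives $\pi(K)\le M/(c_KT)+\pi(x+y>M)$. Letting $T\to\infty$ and then $M\to\infty$ gives $\pi(K)=0$, hence $\pi(E^\circ)=0$. For the SSP, the converse holds because every point of $\partial E$ is fixed, so any $\pi$ with $\pi(\partial E)=1$ is invariant.

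\textbf{Step 3 (axis dynamics and the origin).} Assume \eqref{eq:axis-nontrivial}. Then $\pi$ restricted to $\{y=0\}$ is invariant for the one-dimensional axis process $X$, which is a nonnegative supermartingale with nonzero dynamics. I expect the main obstacle here: showing that such a one-dimensional process spends asymptotically zero time in each $[\epsilon,M]$, since degenerate cases occur (for example $b_{10}=0$ with pure noise).

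\begin{itemize}
\item If $b_{10}>0$, the drift gives the occupation bound as in Step 2.
\item If only noise is present, use a strictly concave test function such as $h(x)=\sqrt{x}$ or $x\wedge1$ smoothed. Its generator is $\le -c<0$ on $[\epsilon,M]$: for the diffusion part $b_{11}x^{r_{11}}h''<0$, and for the stable jumps the integral $\int (h(x+z)-h(x)-zh'(x))\mu(\dd z)<0$ by strict concavity, with $r_{ij}>0$ keeping the rate positive on $[\epsilon,M]$.
\end{itemize}

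Dynkin's formula then bounds $\E\int_0^\infty\1_{[\epsilon,M]}(X_s)\dd s$ by $h(x_0)$ plus a constant, and the same averaging as in Step 2 forces $\pi$ to charge only $\{0\}$ on each axis. Hence $\pi=\delta_{(0,0)}$, which is invariant because the origin is absorbing.

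\textbf{Step 4 (convergence of time averages).} Under weak Feller, the occupation bounds of Steps 1–3 together with $\E S_t\le x_0+y_0$ (Markov inequality) give tightness of $\{\Pi_T^{(x_0,y_0)}\}$. By the Krylov–Bogoliubov argument, every limit point is invariant, so it equals $\delta_{(0,0)}$. Therefore the whole family converges: $\Pi_T^{(x_0,y_0)}\Rightarrow\delta_{(0,0)}$ as $T\to\infty$.
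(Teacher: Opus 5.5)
Your argument is correct, but for the classification (your Steps 2--3) it follows a different route from the paper. The paper uses no occupation times there. It takes the bounded function $f(x,y)=e^{-y}$, for which $\cL f>0$ on $E^\circ$ as soon as $a_2<0$: the interaction contributes $-a_2y^{\theta_2}x^{\kappa_2}e^{-y}>0$, and the self-regulation and noise terms are nonnegative because $e^{-y}$ is decreasing and convex. Hence $Q_tf\ge f$ on $E$ and $Q_tf>f$ on $E^\circ$, and $\int_E(Q_tf-f)\dd\pi=0$ removes interior mass at once, with no truncation of $\pi$. On the axes the same function (and $e^{-x}$) has strictly positive generator under \eqref{eq:axis-nontrivial}, so drift-only and noise-only axis dynamics are treated together; your $\sqrt{x}$ plays this role with the opposite convexity. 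Since the paper's device needs only $a_2<0$, it also serves the predator--prey regime, where $S$ is not a supermartingale and your Step~2 would not apply.

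Your route, in exchange, is quantitative: it bounds the expected total time spent in compacts of $E^\circ$ and in axis intervals away from the origin. With a single test function such as $\sqrt{x}+\sqrt{y}$, whose generator is at most $-c<0$ on $\{\epsilon\le x+y\le M\}$ when $a_1,a_2<0$ and \eqref{eq:axis-nontrivial} holds, it should also give $\Pi_T^{(x_0,y_0)}\Rightarrow\delta_{(0,0)}$ without the weak Feller hypothesis.

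Two small slips. First, a smoothed $x\wedge1$ is not strictly concave: its generator vanishes on the flat part, and for pure diffusion also on the linear part. Keep $\sqrt{x}$. Second, you do not need $\pi$ restricted to the $x$-axis to be invariant, since your averaging bound applies directly to $\pi$.
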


Cooperative systems may admit invariant probabilities
concentrated on \(E^\circ\).
Their nonexplosion requires a separate argument.
For the cooperative results below, we assume that
\begin{equation}
	b_{i0}>0,\qquad
	\theta_i,\kappa_i\ge1,\qquad
	r_{ij}\ge1\ \text{if }b_{ij}>0,
	\quad i,j=1,2.
	\label{eq:cooperative-regularity}
\end{equation}
\begin{assumption}
	\label{ass:cooperative-dissipation}
	The interaction coefficients satisfy \(a_1,a_2>0\), and
	\begin{equation}
		\frac{\theta_1}{r_{10}}+\frac{\kappa_1}{r_{20}}\le1,
		\qquad
		\frac{\kappa_2}{r_{10}}+\frac{\theta_2}{r_{20}}\le1.
		\label{eq:cooperative-powers}
	\end{equation}
	If both equalities hold in \eqref{eq:cooperative-powers}, we further assume that
	\begin{equation}
		\left(\frac{a_1}{b_{10}}\right)^{\kappa_2/r_{10}}
		\left(\frac{a_2}{b_{20}}\right)^{\kappa_1/r_{20}}<1.
		\label{eq:cooperative-critical}
	\end{equation}
\end{assumption}

The following three theorems describe the stationary behavior
of cooperative systems without immigration.
\Cref{thm:cooperative-noimm-boundary} gives sufficient conditions
under which every invariant probability is concentrated on
the boundary.
\Cref{thm:cooperative-interior} gives sufficient conditions
for the existence of an invariant probability concentrated
on \(E^\circ\), together with stationary moment bounds.
For bilinear cooperation with a common relative noise exponent,
\Cref{thm:cooperative-sharp} gives a necessary and sufficient
criterion for the existence of such an interior invariant
probability, including the critical case.

\begin{theorem}
\label{thm:cooperative-noimm-boundary}
Assume \eqref{eq:cooperative-regularity} and
\Cref{ass:cooperative-dissipation} hold with
\(\iota_1=\iota_2=0\). 
Suppose that one of the following conditions holds:
\begin{enumerate}[label=\textup{(\roman*)}]
\item The interaction powers satisfy
\begin{equation}
 \frac{\theta_1}{r_{10}}+\frac{\kappa_1}{r_{20}}
 =\frac{\kappa_2}{r_{10}}+\frac{\theta_2}{r_{20}}=1.
 \label{eq:cooperative-critical-powers}
\end{equation}
\item \(b_{11}+b_{12}>0\), \(b_{21}+b_{22}>0\), and, for
at least one \(i\in\{1,2\}\),
\begin{equation}
 (r_{i1}<2\ \text{if}\ b_{i1}>0)
 \quad\text{or}\quad
 (r_{i2}<\alpha_i\ \text{if}\ b_{i2}>0).
 \label{eq:cooperative-accessible-noise}
\end{equation}
\item \(b_{11}+b_{12}>0\), \(b_{21}+b_{22}>0\), and
\begin{equation*}
 \max_{i=1,2}
 \min\bigl\{r_{i1}-2\ \text{if}\ b_{i1}>0,\;
             r_{i2}-\alpha_i\ \text{if}\ b_{i2}>0\bigr\}
 <(\theta_1+\kappa_1)\wedge(\theta_2+\kappa_2)-1.
\end{equation*}
\end{enumerate}
Then a probability \(\pi\) is invariant for the SSP \(Z^{\tau_0}\) if and only if \(\pi(\partial E)=1\). For the
CWP \(Z\), \(\delta_{(0,0)}\) is the unique
invariant probability.
\end{theorem}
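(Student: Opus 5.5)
Without immigration, the boundary \(\partial E\) is reached and absorbing in the appropriate sense. So the plan is to show that the interior carries vanishing time-averaged mass, \(\frac1T\int_0^T\Pp_{x_0,y_0}(Z_t\in K)\dd t\to0\) for every compact \(K\Subset E^\circ\), together with nonexplosion.

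\textbf{Reduction to boundary hitting.} The reduction from this occupation estimate to the statement is standard.
\begin{itemize}
\item For the SSP, every point of \(\partial E\) is fixed, so any \(\pi\) with \(\pi(\partial E)=1\) is invariant.
\item Conversely, suppose \(\pi\) is invariant and \(\Pp_{x_0,y_0}(\tau_0<\infty)=1\) for all \((x_0,y_0)\in E^\circ\). Then \(\pi(E^\circ)=\int P_t^{\tau_0}\1_{E^\circ}\dd\pi=\int\Pp(\tau_0>t)\dd\pi\to0\) by dominated convergence.
\item For the CWP, \(\pi\) restricted to the axes must be invariant for the one-dimensional axis dynamics. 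Each such dynamics is a nonlinear CSBP with \(b_{i0}>0\), \(r_{i0}\ge1\) and no immigration, so it hits \(0\) (or converges to \(0\) with vanishing occupation of \([\varepsilon,\infty)\)). This forces \(\pi=\delta_{(0,0)}\). I would take this axis estimate from the one-dimensional argument used in \Cref{thm:competitive-noimm}.
\end{itemize}
Conservativeness I would obtain from a Lyapunov function \(V(x,y)=c_1x+c_2y\) (or \(c_1x^{p}+c_2y^{p}\) with suitable \(p\)). \Cref{ass:cooperative-dissipation} and Young's inequality give
\[
a_1x^{\theta_1}y^{\kappa_1}\le \varepsilon x^{r_{10}}+C y^{r_{20}}
\]
whenever \(\theta_1/r_{10}+\kappa_1/r_{20}\le1\) (plus constants). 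In the critical equality case, \eqref{eq:cooperative-critical} supplies weights \(c_i\) with \(\cL V\le C-\delta(x^{r_{10}}+y^{r_{20}})\) near infinity. This is the ``weighted linear term at infinity'' announced in the introduction.

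\textbf{Key step: \(\tau_0<\infty\) a.s., case by case.} Here I would use a function \(F(x,y)=V(x,y)+\lambda(x^{-p}+y^{-p})\) combined with a supermartingale argument. It suffices to find \(F\ge0\), with \(F\to\infty\) at infinity, such that \(\cL F\le -\delta<0\) on \(E^\circ\) outside a neighbourhood of \(\partial E\). Then \(F(Z_{t\wedge\tau_\varepsilon^-})+\delta(t\wedge\tau_\varepsilon^-)\) is a supermartingale, so \(\tau^-_\varepsilon<\infty\) a.s. A second estimate then shows \(\tau_0<\infty\). This second estimate uses that near the boundary, \(g(x,y)=x^{p}\wedge\)-type functions with small \(p\in(0,1)\) have \(\cL g\le0\), and that the boundary is accessible.
\begin{itemize}
\item \emph{Case (i).} With all equalities in \eqref{eq:cooperative-powers}, the dominant terms at infinity are homogeneous along curves \(x^{r_{10}}\sim y^{r_{20}}\). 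The product condition \eqref{eq:cooperative-critical} makes the deterministic flow \(\dot x=a_1x^{\theta_1}y^{\kappa_1}-b_{10}x^{r_{10}}\), \(\dot y=\dots\) strictly dissipative. Near zero, both interaction powers exceed the death powers along these curves, so the drift alone pushes to the origin in finite or infinite time. I would use \(F=(c_1x+c_2y)^{\beta}\) with \(\beta\in(0,1)\) to get \(\cL F\le -\delta F^{\gamma}\) with \(\gamma<1\), giving finite-time convergence of \(F\) to \(0\). I expect a weaker statement is needed here: occupation of compacts of \(E^\circ\) tends to zero, which suffices for invariance via the time-averaged argument.
\item \emph{Case (ii).} Some noise component has \(r_{i1}<2\) or \(r_{i2}<\alpha_i\), so zero is accessible for that coordinate. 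I would use the test function \(x^{p}\) with small \(p>0\). Its generator contains the term \(-p(1-p)b_{11}x^{p+r_{11}-2}\) (respectively the stable analogue \(-c_pb_{12}x^{p+r_{12}-\alpha_1}\)). This term dominates the drift \(p\,a_1x^{p-1+\theta_1}y^{\kappa_1}\) as \(x\to0\) with \(y\) bounded, since \(\theta_1\ge1>r_{11}-1\). That yields \(\Pp(\tau_0<\infty)=1\) after localization.
\item \emph{Case (iii).} The noise exponent condition compares the noise power with the interaction homogeneity \(\theta_i+\kappa_i\). Near the origin the noise dominates the cooperative drift, so \(V^{p}\) with small \(p\) is a supermartingale near \(0\) with a strictly negative drift. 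Far away we use dissipation as before.
\end{itemize}

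\textbf{Main obstacle.} The main difficulty is case (i) in the critical regime, together with gluing the estimates near \(\partial E\) to those at infinity into a single Lyapunov function on all of \(E^\circ\). The stochastic terms there do not favour the boundary, so the argument must rely on the strict deterministic inequality \eqref{eq:cooperative-critical} through a carefully weighted power. Once \(\Pp_{x_0,y_0}(\tau_0<\infty)=1\), or at least vanishing interior occupation, is established in each case, both claims follow by the reduction in the first step.
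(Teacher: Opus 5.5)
You have the easy SSP direction, conservativeness and the CWP axis argument right. The key step, however, aims at a target that is false in general in cases (i) and (iii): $\Pp_{x_0,y_0}(\tau_0<\infty)=1$.

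\textbf{Cases (i) and (iii): $\partial E$ need not be hit.} Case (i) assumes nothing about the noise. If $b_{i1}=b_{i2}=0$, then $\dot x\ge-b_{10}x^{r_{10}}$ with $r_{10}>1$, so $x$ never vanishes. In case (iii), take $\theta_i=\kappa_i=1$, $b_{i2}=0$ and $r_{i1}\in[2,3)$. Then $\log X$ has coefficients bounded on $\{X\vee Y\le N\}$, so again $\tau_0=\infty$ a.s.

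\textbf{Case (i): what is missing.} Your finite-time bound $\cL F\le-\delta F^\gamma$, $\gamma<1$, for $F=(c_1x+c_2y)^\beta$ cannot hold. Without noise $\cL F=o(F)$ at the origin, because every drift term of $c_1x+c_2y$ has degree $>1$. You also misread the geometry: under \eqref{eq:cooperative-critical-powers} the interaction terms are not of higher order near zero. They are exactly homogeneous with the death terms along $x^{r_{10}}\sim y^{r_{20}}$, and this homogeneity is the missing idea. With the weight supplied by \eqref{eq:cooperative-critical}, one gets $\cL S\le-cW_0$ on all of $E$ with \emph{no} additive constant ($C_0=0$ in \Cref{lem:parameter-drift}), not merely $\cL S\le C-\delta W_0$ near infinity. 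Then $S(Z_t)$ is a nonnegative supermartingale with $\E_{x_0,y_0}\int_0^\infty W_0(Z_s)\dd s\le S(x_0,y_0)/c$. This forces $Z_t\to(0,0)$ a.s.\ and in $L^1$ (\Cref{lem:cooperative-critical-extinction}). Hence every invariant $\pi$ of either realization integrates $xy/[(1+x)(1+y)]$ to zero.

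\textbf{Case (iii): the gluing is the real content.} Your near-origin estimate for $S^p$ matches the paper, but you postpone the gluing. On the shell $\{\varepsilon\le S\le R\}$, which reaches the axes away from the origin, neither the noise-versus-interaction comparison nor the dissipation at infinity gives a sign. The paper composes with $f_\lambda(s)=1-\e^{-\lambda s^p}$.
\begin{itemize}
\item Concavity keeps the drift negative near $0$ and near infinity.
\item For large $\lambda$, the strong concavity $f_\lambda''/f_\lambda'\le-\lambda\delta$ lets the noise dominate the bounded drift on the shell. The noise is nondegenerate there precisely because $b_{11}+b_{12}>0$ and $b_{21}+b_{22}>0$; your sketch never uses these hypotheses.
\end{itemize}
The result is a bounded function with $\cL(f_\lambda\circ S)<0$ on all of $E^\circ$. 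If $\pi(E^\circ)>0$, the normalized restriction of $\pi$ to $E^\circ$ is stationary and never leaves $E^\circ$. Yet $\E f_\lambda(S(Z_t))$ would strictly decrease, a contradiction that needs no boundary hitting.

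\textbf{Case (ii): probability one is not needed.} Localization only gives $\Pp_z(\tau_0<\infty)>0$, not $=1$, but that suffices. Since $\partial E$ is absorbing, invariance yields $\int_{E^\circ}\Pp_z(\tau_0\le n)\,\pi(\dd z)=0$ for every $n$. Hence $\pi(E^\circ)=0$ once $\Pp_z(\tau_0<\infty)>0$ on $E^\circ$, which the paper takes from \cite[Theorem~1.7]{XiongYangZhouEnhancing}.
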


When \(r_{10}=r_{20}=2\), \(\theta_i=\kappa_i=1\) for \(i=1,2\), and \(a_1,a_2>0\),
\Cref{ass:cooperative-dissipation} reduces to
\(a_1a_2<b_{10}b_{20}\), and condition \textup{(i)}
of \Cref{thm:cooperative-noimm-boundary} holds.
Condition \textup{(ii)} of  \Cref{thm:cooperative-noimm-boundary} includes
square-root branching noise.

\begin{theorem}
\label{thm:cooperative-interior}
Assume \eqref{eq:cooperative-regularity} and
\Cref{ass:cooperative-dissipation} hold with
\(\iota_1=\iota_2=0\). For \(i=1,2\), set
\[
 r_i:=\min\bigl\{r_{i0}-1,\;
          r_{i1}-2\ \text{if }b_{i1}>0,\;
          r_{i2}-\alpha_i\ \text{if }b_{i2}>0\bigr\}.
\]
Suppose that \(r_i>\theta_i-1\) for \(i=1,2\) and that either
\begin{enumerate}[label=\textup{(\roman*)}]
\item  \((r_1-\theta_1+1)(r_2-\theta_2+1) >\kappa_1\kappa_2\); or
\item \( (r_1-\theta_1+1)(r_2-\theta_2+1) =\kappa_1\kappa_2\) and
\[
 \left(\frac{a_1}{{b_1}}\right)^{r_2-\theta_2+1}
 \left(\frac{a_2}{{b_2}}\right)^{\kappa_1}>1,
\]
where, for \(i=1,2\),
\beqnn
 {b_i}:= b_{i0}\1_{\{r_{i0}-1= r_i \}}
 +(r_i+1)b_{i1}\1_{\{r_{i1}-2= r_i\}}+\frac{\Gamma(r_i+\alpha_i)}
 {\Gamma(r_i+1)\Gamma(\alpha_i)}
 b_{i2}\1_{\{r_{i2}-\alpha_i=r_i\}}.
\eeqnn
\end{enumerate}
Then the CWP and the SSP have a common nondegenerate
invariant probability \(\pi_\circ\) satisfying, for some
\(\eta,\zeta>0\),
\[
 \int_{E^\circ}
 \bigl(x^{r_{10}}+y^{r_{20}}+x^{-\eta}+y^{-\zeta}\bigr)
 \,\pi_\circ(\dd x,\dd y)<\infty.
\]
\end{theorem}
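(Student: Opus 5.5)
The plan is a Lyapunov / Krylov--Bogoliubov argument on the open quadrant, using a function that blows up both at the boundary and at infinity. Let \(\mathcal L\) denote the generator of \eqref{eq:model} with \(\iota_1=\iota_2=0\), acting on \(C^2(E^\circ)\) functions. Set
\[
V(x,y)=c_1x+c_2y+x^{-\eta}+\lambda y^{-\zeta},
\]
with \(c_1,c_2>0\), \(\lambda>0\) and small \(\eta,\zeta>0\) to be chosen. The aim is a drift inequality
\[
\mathcal LV\le C-\varepsilon\bigl(x^{r_{10}}+y^{r_{20}}+x^{-\eta}+y^{-\zeta}\bigr)
\quad\text{on }E^\circ .
\]

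\textbf{Large values.} The linear part gives
\[
\mathcal L(c_1x+c_2y)=c_1a_1x^{\theta_1}y^{\kappa_1}+c_2a_2y^{\theta_2}x^{\kappa_2}-c_1b_{10}x^{r_{10}}-c_2b_{20}y^{r_{20}} .
\]
By Young's inequality with exponents \(r_{10}/\theta_1\), \(r_{20}/\kappa_1\), and using \eqref{eq:cooperative-powers}, each cross term is bounded by a small multiple of \(x^{r_{10}}+y^{r_{20}}\) plus a constant. In the doubly critical case the weights \(c_1,c_2\) are chosen so that \eqref{eq:cooperative-critical} leaves a strict margin. This yields \(-\varepsilon(x^{r_{10}}+y^{r_{20}})+C\), presumably as in the nonexplosion lemma for cooperative systems. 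The contributions of the negative powers at infinity are lower order and can be absorbed.

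\textbf{Near the boundary.} For \(x^{-\eta}\), direct computation gives a drift term \(-\eta a_1x^{\theta_1-1-\eta}y^{\kappa_1}\) (favourable) and \(+\eta b_{10}x^{r_{10}-1-\eta}\). The diffusion contributes \(\eta(\eta+1)b_{11}x^{r_{11}-2-\eta}\). The stable jumps contribute \(b_{12}x^{r_{12}-\alpha_1-\eta}\,\frac{\Gamma(\eta+\alpha_1)}{\Gamma(\eta)\Gamma(\alpha_1)}\)-type constants, which are \(O(\eta)\). So, up to lower-order terms as \(x\to0\),
\[
\mathcal L x^{-\eta}\approx \eta\,x^{-\eta}\bigl(b_1(\eta)\,x^{r_1}-a_1x^{\theta_1-1}y^{\kappa_1}\bigr),
\]
where \(b_1(\eta)\to b_1\) as \(\eta\to0\). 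The first term is a positive cost and the second is the cooperation gain. A pure power function cannot control the region where both coordinates are small, since \(x^{\theta_1-1}y^{\kappa_1}\) vanishes there. I would therefore replace \(x^{-\eta}+\lambda y^{-\zeta}\) by a product-type function
\[
W(x,y)=x^{-\eta}y^{-\zeta}\quad\text{or}\quad x^{-\eta}+\lambda y^{-\zeta}+x^{-\eta}y^{-\zeta},
\]
with the ratio \(\zeta/\eta\) tuned. For \(W=x^{-\eta}y^{-\zeta}\),
\[
\mathcal L W\approx W\bigl(\eta b_1x^{r_1}+\zeta b_2y^{r_2}-\eta a_1x^{\theta_1-1}y^{\kappa_1}-\zeta a_2y^{\theta_2-1}x^{\kappa_2}\bigr).
\]
One needs \(-\eta a_1x^{\theta_1-1}y^{\kappa_1}-\zeta a_2y^{\theta_2-1}x^{\kappa_2}\) to dominate \(\eta b_1x^{r_1}+\zeta b_2y^{r_2}\) near the origin, along every curve \(y=x^{s}\). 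Comparing exponents along \(y=x^s\), the gain term dominates the cost term \(x^{r_1}\) when
\[
\theta_1-1+s\kappa_1<r_1 ,
\]
and the gain term dominates \(y^{r_2}\) when
\[
s(\theta_2-1)+\kappa_2<s\,r_2 .
\]
For every \(s\), at least one gain must beat the cost in its coordinate, and the other cost must then be absorbed as well. Feasibility over all \(s\in(0,\infty)\) reduces to an interval condition. This condition is exactly
\[
(r_1-\theta_1+1)(r_2-\theta_2+1)>\kappa_1\kappa_2 ,
\]
which is case (i). In the equality case (ii), there is one critical slope \(s^*=(r_1-\theta_1+1)/\kappa_1\), along which the powers coincide. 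A weighted comparison of the coefficients, with \(\zeta/\eta\) chosen via AM--GM, then requires precisely
\[
(a_1/b_1)^{r_2-\theta_2+1}(a_2/b_2)^{\kappa_1}>1 .
\]
This scaling analysis, made rigorous via Young's inequality uniformly on a neighbourhood of \(\partial E\) (including the axes far from the origin, where single powers suffice), is the main obstacle. I would first do the case where all \(r_{ij}\) attain \(r_i\) only in one term, then handle the others.

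\textbf{Conclusion.} Given the Lyapunov inequality, apply Itô's formula up to the localizing times \(\tau_R^-\wedge\tau_R^+\) and let \(R\) tend to its limit. This shows that \(\tau_0=\tau_\infty=\infty\) a.s. from \(E^\circ\), so the CWP and the SSP coincide on \(E^\circ\). It also gives
\[
\frac1T\int_0^T\E\bigl[x^{r_{10}}+y^{r_{20}}+X_t^{-\eta}+Y_t^{-\zeta}\bigr]\dd t\le C(1+V(x_0,y_0))/T+C/\varepsilon .
\]
Hence \(\{\Pi_T^{(x_0,y_0)}\}\) is tight in \(E^\circ\). The weak Feller property on \(E^\circ\) (by localization and continuous dependence) and Krylov--Bogoliubov give a limit point \(\pi_\circ\) with \(\pi_\circ(E^\circ)=1\), which is invariant for both processes. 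The moment bound follows from Fatou's lemma.
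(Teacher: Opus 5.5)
Your outer structure is the paper's. The weighted linear part is handled as in \Cref{lem:parameter-drift}, and localization, tightness on \(E^\circ\), the weak Feller property and Krylov--Bogoliubov are exactly \Cref{lem:interior-lyapunov}. The gap is the boundary part you call ``the main obstacle'': the route you sketch there fails, because small exponents in \(V\) give no coercivity at the corner.

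\textbf{Why small exponents fail.} Every favourable term of \(\cL x^{-\eta}\), \(\cL y^{-\zeta}\) or \(\cL(x^{-\eta}y^{-\zeta})\) is the function itself times a monomial of total degree \(\theta_i+\kappa_i-1\ge1\). Examples are \(\eta a_1x^{\theta_1-1-\eta}y^{\kappa_1}\) and \(x^{-\eta}y^{-\zeta}(\eta a_1x^{\theta_1-1}y^{\kappa_1}+\zeta a_2x^{\kappa_2}y^{\theta_2-1})\). Along \(x=y=t\downarrow0\) all of them are \(O(t^{1-\eta-\zeta})\to0\) when \(\eta+\zeta<1\). So \(\cL V\le C-\varepsilon(x^{-\eta}+y^{-\zeta})\) cannot hold near \((0,0)\), and no negative part with compact sublevel sets in \(E^\circ\) is produced. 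If \(\theta_1>1\) and \(\eta<\theta_1-1\), this fails already on the open \(y\)-axis, contrary to ``single powers suffice''.

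\textbf{The slope analysis is also wrong.} Your requirement that ``the other cost must then be absorbed'' is not implied by the interval condition. Take \(\theta_i=\kappa_i=1\), \(b_{i1}=b_{i2}=0\), \(r_{10}=4\), \(r_{20}=3/2\). Then \Cref{ass:cooperative-dissipation} holds and \(r_1r_2=3/2>1\). Yet along \(y=x^s\) with \(0<s<2\), the bracket of \(x^{-\eta}y^{-\zeta}\) is dominated by \(+\zeta b_{20}y^{1/2}\), so \(\cL(x^{-\eta}y^{-\zeta})\to+\infty\) for small \(s\). The gains of \(x^{-\eta}+\lambda y^{-\zeta}\) are of lower order there and do not compensate.

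\textbf{The constant does not match either.} The cost coefficient of \(x^{-\eta}\) is \(b_{10}\1_{\{r_{10}-1=r_1\}}+(\eta+1)b_{11}\1_{\{r_{11}-2=r_1\}}+\frac{\Gamma(\eta+\alpha_1)}{\Gamma(\eta+1)\Gamma(\alpha_1)}b_{12}\1_{\{r_{12}-\alpha_1=r_1\}}\). As \(\eta\to0\) it does not tend to \(b_1\), which is this expression evaluated at \(\eta=r_1\). The constant in (ii) already signals that the exponent in \(V\) must be near \(r_1\), not near \(0\).

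\textbf{The missing idea} is to decouple the exponents in \(V\) from the exponents \(\eta,\zeta\) of the coercive function. Contrary to your remark, a sum of pure powers does control the corner if the powers are large.

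The paper uses \(\Psi=\chi(x)x^{-p}+\chi(y)y^{-q}\). Its cooperation gains are \(pa_1x^{-A}y^{\kappa_1}\) and \(qa_2x^{\kappa_2}y^{-B}\), with \(A=p+1-\theta_1\) and \(B=q+1-\theta_2\). They jointly blow up at the corner once \(AB>\kappa_1\kappa_2\). Young's inequality then gives \(x^{-A}y^{\kappa_1}+x^{\kappa_2}y^{-B}\ge x^{-\eta}\vee y^{-\zeta}\), with \(\eta=(AB-\kappa_1\kappa_2)/(B+\kappa_1)\) and \(\zeta=(AB-\kappa_1\kappa_2)/(A+\kappa_2)\). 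The costs \(x^{r_1-p}\) and \(y^{r_2-q}\), now unbounded, are absorbed when \(p-\eta<r_1\) and \(q-\zeta<r_2\). For \(p=\rho q\) and \(q\to\infty\) this reduces to \(\kappa_2/(r_2-\theta_2+1)<\rho<(r_1-\theta_1+1)/\kappa_1\), which is condition (i).

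In case (ii), the paper takes \(p=r_1+\eta\) and \(q=r_2+\varepsilon\) with \(\eta=(r_1-\theta_1+1)\varepsilon/\kappa_1\), and sets \(R=y^{\kappa_1}/x^{r_1-\theta_1+1}\). Then \(y^{-\varepsilon}=x^{-\eta}R^{-\varepsilon/\kappa_1}\), so both coordinate brackets sit at the scale \(x^{-\eta}\) as functions of \(R\). Because \(\Psi\) is additive, it admits weights \(\lambda_1,\lambda_2\) with \(\lambda_1r_1(b_1-a_1R)+\lambda_2r_2(b_2-a_2R^{-\sigma})<0\) for all \(R>0\), where \(\sigma=\kappa_2/(r_1-\theta_1+1)\). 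Such weights exist precisely under the inequality in (ii), and this is where the constants \(b_i\), evaluated at exponent \(r_i\), enter.

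A single product \(x^{-\eta}y^{-\zeta}\) cannot do this cross-coordinate weighting. Along the critical curve \(y\propto x^{(r_1-\theta_1+1)/\kappa_1}\), its costs \(x^{r_1}\) and \(y^{r_2}\) are in general of different orders, so an AM--GM choice of \(\zeta/\eta\) does not yield (ii).
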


When \(\theta_i=\kappa_i=1\) for \(i=1,2\),
\Cref{thm:cooperative-interior}\textup{(i)} applies provided that
\beqlb\label{eq1005} 
 r_{i0}>2,\qquad
 r_{i1}>3\ \text{if }b_{i1}>0,\qquad
 r_{i2}>\alpha_i+1\ \text{if }b_{i2}>0,
 \qquad i=1,2.
\eeqlb
 Under \eqref{eq1005}, both inequalities in
\eqref{eq:cooperative-powers} hold strictly. And \eqref{eq1005} ensure the existence of an interior invariant
probability. When the two coordinates share a common relative
noise exponent, the following theorem gives a necessary and
sufficient condition.

\begin{theorem}
\label{thm:cooperative-sharp}
Assume \eqref{eq:cooperative-regularity} holds with
\(\iota_1=\iota_2=0\), \(a_1,a_2>0\),
\(\theta_i=\kappa_i=1\), \(r_{i0}>2\), and
\(b_{i1}+b_{i2}>0\), \(i=1,2\).
Suppose that, for some constant \(\beta\in\R\),
\[
 r_{i1}=2+\beta\quad\text{if }b_{i1}>0,
 \qquad
 r_{i2}=\alpha_i+\beta\quad\text{if }b_{i2}>0,
 \qquad i=1,2.
\]
When \(\beta=1\), define
\[
 \Psi_i(p):=b_{i1}p(p+1)
   +b_{i2}\frac{\Gamma(p+\alpha_i)}
                    {\Gamma(p)\Gamma(\alpha_i)},
 \qquad p>0,\quad i=1,2,
\]
and let \(\widehat q_1,\widehat q_2\in(0,1)\) be the unique solutions of
\beqlb\label{q1q2} 
 \Psi_1(\widehat q_1)=a_2(1-\widehat q_1),
 \qquad
 \Psi_2(\widehat q_2)=a_1(1-\widehat q_2).
\eeqlb
For both the CWP \(Z\) and the SSP \(Z^{\tau_0}\),
a nondegenerate invariant probability exists if and only if
\[
 \beta>1
 \quad\text{or}\quad
 \bigl(\beta=1\ \text{and}\ \widehat q_1+\widehat q_2>1\bigr).
\]
Otherwise, the invariant probabilities of the
SSP \(Z^{\tau_0}\) are exactly the probabilities concentrated on
\(\partial E\), whereas \(\delta_{(0,0)}\) is the unique
invariant probability of the CWP \(Z\).
\end{theorem}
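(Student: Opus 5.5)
The plan is to split according to the sign of \(\beta-1\). The two off-critical cases should follow from \Cref{thm:cooperative-noimm-boundary,thm:cooperative-interior}, so the real work is the critical case \(\beta=1\).

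\emph{Checking the standing hypotheses.} Here \(\theta_i=\kappa_i=1\) and \(r_{i0}>2\). Hence both sums in \eqref{eq:cooperative-powers} equal \(1/r_{10}+1/r_{20}<1\). So \Cref{ass:cooperative-dissipation} holds with strict inequalities, and \eqref{eq:cooperative-critical} is not needed.

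\emph{Case \(\beta<1\).} Since \(b_{i1}+b_{i2}>0\), the minimum appearing in \Cref{thm:cooperative-noimm-boundary}\textup{(iii)} equals \(\beta\) for each \(i\). The right-hand side there is \((\theta_1+\kappa_1)\wedge(\theta_2+\kappa_2)-1=1\). So condition (iii) reads \(\beta<1\), and that theorem gives both the boundary concentration for the SSP and the uniqueness of \(\delta_{(0,0)}\) for the CWP.

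\emph{Case \(\beta>1\).} We have \(r_i=\min\{r_{i0}-1,\beta\}>1=\theta_i\), since \(r_{i0}-1>1\). Hence \((r_1-\theta_1+1)(r_2-\theta_2+1)=r_1r_2>1=\kappa_1\kappa_2\), and \Cref{thm:cooperative-interior}\textup{(i)} yields a common nondegenerate invariant probability.

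\emph{Case \(\beta=1\), basic computation.} Now \(r_1=r_2=1\) and the product equals \(\kappa_1\kappa_2\), so we are exactly at the threshold of \Cref{thm:cooperative-interior}\textup{(ii)}. I would redo that proof with sharper test functions. The basic computation is for \(f(x,y)=x^{-p}y^{-q}\) and for its positive-power analogues. Write \(\mathcal{L}\) for the generator of \eqref{eq:model} with \(\iota_i=0\); away from the boundary it acts on smooth functions of \((x,y)\). The relative-noise structure with \(\beta=1\) makes the diffusion and stable-jump contributions combine into exactly \(\Psi_1(p)\,x\) and \(\Psi_2(q)\,y\), so
\[
\frac{\mathcal{L}f}{f}=x\bigl(\Psi_1(p)-a_2q\bigr)+y\bigl(\Psi_2(q)-a_1p\bigr)+p\,b_{10}x^{r_{10}-1}+q\,b_{20}y^{r_{20}-1}.
\]
The last two terms are of higher order near the origin, because \(r_{i0}-1>1\).

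\emph{Case \(\beta=1\), sufficiency.} I would look for \(p,q>0\) making both brackets negative, that is, \(\Psi_1(p)<a_2q\) and \(\Psi_2(q)<a_1p\). Each \(\Psi_i\) is increasing and convex. Using the definitions \eqref{q1q2}, I expect that choosing \(p\) slightly below \(\widehat q_1\) and \(q\) slightly below \(\widehat q_2\), adjusted along the curves \(p+q=1\), makes both inequalities solvable exactly when \(\widehat q_1+\widehat q_2>1\). I have not checked this equivalence, and its exact form is part of the main obstacle below.

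Given such \(p,q\), I would combine \(x^{-p}y^{-q}\) (possibly together with \(x^{-p}+y^{-q}\) to control the axes) with the weighted linear term at infinity used in \Cref{thm:cooperative-interior}. This gives a Lyapunov function \(V\) with \(\mathcal{L}V\le C-c\,(\text{coercive term})\). The same localization, tightness and weak Feller argument as before then produces an invariant probability with finite negative moments, hence charging only \(E^\circ\). Because the SSP and the CWP coincide before \(\tau_0\), this measure is invariant for both.

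\emph{Case \(\beta=1\), necessity.} Suppose \(\widehat q_1+\widehat q_2\le1\) and let \(\pi\) be nondegenerate invariant. I would test stationarity against \(g=x^{s}y^{t}\) with \(s,t\in(0,1)\), using the concave truncation mentioned in the introduction to justify the identity \(\int \mathcal{L} g\,\dd\pi=0\); the positive powers are integrable by the stationary moment bounds. For positive powers the bracket becomes \(x\bigl(a_2t-\Psi_1(-s)\text{-type term}\bigr)+y(\dots)\) plus the negative dissipative terms. I would choose \((s,t)\) on the curve determined by \(\widehat q_1,\widehat q_2\) so that \(\mathcal{L}g\le0\), with strict inequality on a set of positive \(\pi\)-measure, contradicting the identity. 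In the borderline \(\widehat q_1+\widehat q_2=1\) the only strict contribution comes from the \(b_{i0}\)-terms, which is enough because \(\pi\) charges \(E^\circ\). Once no nondegenerate invariant law exists, the argument of \Cref{thm:cooperative-noimm-boundary}, via occupation estimates showing that the time-averaged mass in \(E^\circ\) vanishes, gives the stated boundary description for the SSP and the uniqueness of \(\delta_{(0,0)}\) for the CWP.

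\emph{Main obstacle.} The hard part is matching sufficiency and necessity exactly at \(\widehat q_1+\widehat q_2=1\). This requires verifying that the two-parameter inequality system is solvable iff \(\widehat q_1+\widehat q_2>1\); that equivalence rests on the convexity of \(\Psi_i\), and I have not verified it. It also requires that the truncated stationary identities survive the limit in the degenerate case. I would attempt the algebraic equivalence first, by parametrizing \(q=1-p\) and studying the sign changes of \(\Psi_1(p)-a_2(1-p)\) and \(\Psi_2(1-p)-a_1p\).
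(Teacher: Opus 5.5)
Your treatment of $\beta\ne1$, your check of \Cref{ass:cooperative-dissipation}, and your formula for $\cL(x^{-p}y^{-q})/(x^{-p}y^{-q})$ agree with the paper. The genuine gap is the necessity direction at $\beta=1$: test functions with positive powers cannot reach the sharp threshold. For $g=x^sy^t$ with $s,t\in(0,1)$, \Cref{lem:cooperative-power-calculus} gives
\[
 \frac{\cL g}{g}=x\bigl[a_2t-\Lambda_1(s)\bigr]+y\bigl[a_1s-\Lambda_2(t)\bigr]-sb_{10}x^{r_{10}-1}-tb_{20}y^{r_{20}-1},
 \qquad
 \Lambda_i(s):=s(1-s)\Bigl[b_{i1}+\frac{b_{i2}\Gamma(\alpha_i-s)}{\Gamma(\alpha_i)\Gamma(2-s)}\Bigr].
\]
The $b_{i0}$-terms are of higher order at the origin, so $\cL g\le0$ on $E^\circ$ forces $a_2t\le\Lambda_1(s)$ and $a_1s\le\Lambda_2(t)$. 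With pure diffusion noise, multiplying these gives $a_1a_2\le b_{11}b_{21}(1-s)(1-t)<b_{11}b_{21}$. Now take $b_{12}=b_{22}=0$, $b_{11}=b_{21}=b$ and $a_1=a_2=a$. Then $\widehat q_1=\widehat q_2\le\frac12$ iff $\Psi_1(\frac12)\ge\frac a2$, i.e.\ iff $a\le\frac32b$. For $b\le a\le\frac32b$ the theorem says no nondegenerate invariant probability exists. Yet no $x^sy^t$ with $s,t>0$ has $\cL g$ of one sign, so your contradiction cannot be produced. Also, the algebraic equivalence you list as the main obstacle is not the difficulty. It follows at once from the strict monotonicity of $p\mapsto\Psi_1(p)-a_2(1-p)$ and $q\mapsto\Psi_2(q)-a_1(1-q)$.

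The paper instead tests with negative powers on the line $p+q=1$. Choose $p\in[\widehat q_1,1-\widehat q_2]$, which is possible exactly when $\widehat q_1+\widehat q_2\le1$, and set $q=1-p$. Monotonicity gives $\Psi_1(p)\ge a_2q$ and $\Psi_2(q)\ge a_1p$. Your formula then yields $\cL(x^{-p}y^{-q})>0$ on $E^\circ$, strictly because of the $b_{i0}$-terms, even when $\widehat q_1+\widehat q_2=1$.

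The real work, in \Cref{lem:sharp-stationary-tests}, is justifying $\int\cL f\,\dd\pi=0$ without any a priori negative moments of $\pi$. A concave truncation $\phi_n\circ R$ of $R=x^{-1}+y^{-1}$, whose generator contains $-a_1y/x-a_2x/y$, first gives $x/y+y/x\in L^1(\pi)$. Because $p+q=1$, this controls $xf=(x/y)^q$ and $yf=(y/x)^p$, which bound the truncation error of $\phi_n\circ f$; this is why the exponents are taken on $p+q=1$. Before this, one reduces to $\pi(E^\circ)=1$ by normalizing $\pi(\cdot\cap E^\circ)$, which is invariant because $\partial E$ is absorbing. No occupation estimates are needed.

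In the sufficiency part, by contrast, $p+q=1$ is the wrong choice. There $(x+y)x^{-p}y^{-q}$ is homogeneous of degree zero, so a coercive function of the form $W_0+S\,x^{-p}y^{-q}$ has no compact sublevel sets near the origin. Take $(p,q)=(\widehat q_1,\widehat q_2)$ instead, for which $\Psi_1(\widehat q_1)=a_2(1-\widehat q_1)<a_2\widehat q_2$ directly. The negative powers must also be cut off, because the term $pb_{10}x^{r_{10}-1}x^{-p}y^{-q}$ is uncontrolled for large $x$ and small $y$. The paper uses $F=g_1(x)g_2(y)$ with $g_i(u)=1+\varepsilon_i\chi(u)u^{-\widehat q_i}$. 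This is exactly your suggested combination of $x^{-p}y^{-q}$ with $x^{-p}+y^{-q}$, with cutoffs and small weights added.
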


\subsection{Models with immigration}
\label{sec:imm-results}

{For the results in this subsection, we allow nonnegative
immigration rates and assume}
\begin{equation}
	\iota_i\ge0,\quad b_{i0}>0,\quad r_{i0}>1,\quad
	\theta_i,\kappa_i\ge1,\quad
	r_{ij}\ge1\ \text{if }b_{ij}>0,\quad i,j=1,2.
	\label{eq:imm-regularity}
\end{equation}
{The following assumption controls the drift at infinity.}
\begin{assumption}
	\label{ass:weighted-dissipation}
	One of the
	following conditions holds:
	\begin{enumerate}[label=\textup{(\roman*)}]
		\item \(a_1,a_2\le0\).
		
		\item \(a_1>0\), \(a_2\le0\), and
		\begin{equation}
			\frac{\theta_1}{r_{10}}+\frac{\kappa_1}{r_{20}}
			-\1_{\{a_2<0\}}
			\left(
			\frac{\kappa_2}{r_{10}}+\frac{\theta_2}{r_{20}}-1
			\right)^+
		 \left(\frac{\theta_1}{\kappa_2}
             \wedge\frac{\kappa_1}{\theta_2}\right)
			\le1.
			\label{eq:mixed-parameter-condition}
		\end{equation}
		\item \Cref{ass:cooperative-dissipation} holds.
	\end{enumerate}
\end{assumption}
{By \Cref{prop:imm-wellposed}, equation \eqref{eq:model}
admits a pathwise unique nonnegative strong solution
\(Z=(X,Y)\) from every initial state in \(E\), for arbitrary
signs of \(a_1,a_2\).} When \(\iota_1=\iota_2=0\), it is the
CWP in \Cref{def:coordinatewise}.
The results below concern this solution, including at zero
immigration. We retain the notation \(P_t\) for its transition
semigroup, with killing at \(\tau_\infty\), and
\(\Pi_T^{(x_0,y_0)}\) for its time-averaged transition laws,
defined as in \Cref{sec:noimm-results}.

We first establish existence and moment bounds for invariant probabilities.
\begin{theorem}
\label{thm:imm-existence}
Assume \eqref{eq:imm-regularity} and
\Cref{ass:weighted-dissipation} hold. Then the transition semigroup $(P_t)_{t \ge 0}$
of \eqref{eq:model} admits an invariant probability.
Moreover, there exist constants \(c, c_1>0\) and \(C_0\ge0\),
depending only on the parameters of \eqref{eq:model},
such that every invariant probability \(\pi\) satisfies
\begin{equation}
	\left\{
	\begin{aligned}
		&\int_E\bigl(b_{10}x^{r_{10}}+c_1b_{20}y^{r_{20}}
		-a_1x^{\theta_1}y^{\kappa_1}
		-c_1a_2x^{\kappa_2}y^{\theta_2}\bigr)
		\pi(\dd x,\dd y)
		\le\iota_1+c_1\iota_2,\\
		&\int_E\bigl(x^{r_{10}}+y^{r_{20}}\bigr)
		\pi(\dd x,\dd y)
		\le\frac{\iota_1+c_1\iota_2+C_0}{c}<\infty.
	\end{aligned}
	\right.
	\label{eq:imm-moment}
\end{equation}
\end{theorem}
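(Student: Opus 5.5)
The plan is a Krylov--Bogoliubov argument driven by a weighted linear Lyapunov function
\[
V(x,y):=x+c_1y .
\]
The weight \(c_1>0\) will be chosen according to the case of \Cref{ass:weighted-dissipation}.

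\textbf{Generator of \(V\).} Since \(V\) is linear and the noise integrals are compensated, the generator acting on \(V\) sees only the drift:
\[
\cL V=\iota_1+c_1\iota_2+a_1x^{\theta_1}y^{\kappa_1}+c_1a_2x^{\kappa_2}y^{\theta_2}-b_{10}x^{r_{10}}-c_1b_{20}y^{r_{20}}.
\]
The key analytic step is to find \(c,c_1>0\) and \(C_0\ge0\) such that
\begin{equation*}
b_{10}x^{r_{10}}+c_1b_{20}y^{r_{20}}-a_1x^{\theta_1}y^{\kappa_1}-c_1a_2x^{\kappa_2}y^{\theta_2}\ge c\,(x^{r_{10}}+y^{r_{20}})-C_0,\qquad (x,y)\in E.
\end{equation*}
The three cases are handled as follows.
\begin{itemize}
\item Case (i), \(a_1,a_2\le0\): the interaction terms are nonnegative on the left, so the bound holds with \(c_1=1\) and \(C_0=0\).
\item Case (iii), cooperative: apply weighted Young's inequality in the form
\[
x^{\theta_1}y^{\kappa_1}\le \varepsilon x^{r_{10}}+C_\varepsilon y^{r_{20}}
\]
when \(\theta_1/r_{10}+\kappa_1/r_{20}<1\). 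Here the exponents are sub-homogeneous, and a constant absorbs the gap. Do the same for the second interaction term. In the critical equality case, use the exact Young identity
\[
x^{\theta}y^{\kappa}\le \tfrac{\theta}{r_{10}}\lambda^{r_{10}/\theta}x^{r_{10}}+\tfrac{\kappa}{r_{20}}\lambda^{-r_{20}/\kappa}y^{r_{20}}.
\]
Then optimize \(\lambda\) and \(c_1\). Condition \eqref{eq:cooperative-critical} is exactly the solvability of the resulting pair of strict inequalities for the \(x^{r_{10}}\) and \(y^{r_{20}}\) coefficients.
\item Case (ii), mixed: the term \(-c_1a_2x^{\kappa_2}y^{\theta_2}\ge0\) may be used to absorb part of \(a_1x^{\theta_1}y^{\kappa_1}\) by taking \(c_1\) large. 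Interpolating between \(x^{r_{10}}\), \(y^{r_{20}}\) and \(x^{\kappa_2}y^{\theta_2}\) via a three-term Young inequality should reproduce \eqref{eq:mixed-parameter-condition}.
\end{itemize}
I expect this case analysis, especially case (ii) and the critical case, to be the main computational obstacle.

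\textbf{Occupation estimate and tightness.} Apply It\^o's formula to \(V(Z_{t\wedge\tau_R^+})\), where the martingale parts are true martingales after localization. Taking expectations gives
\[
\E\,V(Z_{t\wedge\tau_R^+})+c\,\E\int_0^{t\wedge\tau_R^+}(X_s^{r_{10}}+Y_s^{r_{20}})\dd s\le V(x_0,y_0)+(\iota_1+c_1\iota_2+C_0)t .
\]
Since \(V\to\infty\) at infinity, letting \(R\to\infty\) yields nonexplosion, so \(P_t\) is conservative. It also gives the bound
\[
\frac1T\int_0^T\E(X_t^{r_{10}}+Y_t^{r_{20}})\dd t\le (V(x_0,y_0)+(\iota_1+c_1\iota_2+C_0)T)/(cT),
\]
so the family \(\{\Pi_T^{(x_0,y_0)}\}_{T\ge1}\) is tight.

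\textbf{Existence of an invariant probability.} Use the weak Feller property of \((P_t)\), obtained from pathwise uniqueness via \Cref{prop:imm-wellposed} and localization: solutions started from nearby points stay close up to exit times, and the exit times are controlled by the estimate above. Then any weak limit point of \(\Pi_T^{(x_0,y_0)}\) is invariant by the standard Krylov--Bogoliubov argument.

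\textbf{Moment bounds for every invariant \(\pi\).} Start \(Z\) from \(\pi\) itself, first truncating the initial law by \(V\le M\) if needed. Write the drift lower bound in the form \(\Phi:=b_{10}x^{r_{10}}+\dots\ge c(x^{r_{10}}+y^{r_{20}})-C_0\). Integrating the Itô inequality in time, using stationarity, and replacing \(V\) by the concave truncation \(V\wedge M\) (or \(\log(1+V)\)-type functions, so that \(\int V\dd\pi<\infty\) is not presupposed), one gets
\[
\int \Phi\wedge K\,\dd\pi\le \iota_1+c_1\iota_2
\]
for every \(K\). Monotone convergence, applied to the positive part of \(\Phi\), then gives the first inequality of \eqref{eq:imm-moment}, and combining it with the lower bound on \(\Phi\) gives the second. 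Justifying this stationary identity without a priori integrability is the delicate point. I would first try approximating \(V\) by \(V_\varepsilon=V/(1+\varepsilon V)\), whose generator is bounded above by \(\cL V/(1+\varepsilon V)^2\) plus nonpositive curvature terms, and then let \(\varepsilon\downarrow0\) by Fatou.
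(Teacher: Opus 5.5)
Your proposal is correct and follows the paper's proof essentially step by step. The paper uses the same weighted linear Lyapunov function $x+c_1y$ with a case-by-case drift bound (Lemma~\ref{lem:parameter-drift}); in the critical cooperative case it works with the convex one-variable function $F(t)=b_{10}t+c_1b_{20}-a_1t^p-c_1a_2t^r$ and an explicit $c_1$, which is an equivalent tangent-line computation to your optimized Young inequalities. It then proceeds, as you do, through nonexplosion and tightness, weak Feller via localized $L^1$-stability, Krylov--Bogoliubov, and a concave truncation plus Fatou for the stationary bounds.

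The one technical difference is the choice of truncation. The paper uses $n(1-e^{-V_0/n})$, whose generator is bounded, so stationarity follows by bounded convergence. By contrast, $\cL\bigl(V/(1+\varepsilon V)\bigr)$ is only bounded above: for instance, $-b_{10}x^{r_{10}}/(1+\varepsilon V)^2$ is unbounded below when $r_{10}>2$. You therefore need the one-sided argument: localized Dynkin from each starting point, monotone convergence for the negative part, then Tonelli against $\pi$. Do not truncate the initial law, since that destroys stationarity.
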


The stationary drift inequality becomes an equality under the following noise-moment condition.
\begin{corollary}
 \label{cor:stationary-balance}
 Under the assumptions of \Cref{thm:imm-existence}, suppose in addition that
 \begin{equation}
  r_{ij}\le r_{i0}\quad\text{if }b_{ij}>0,
  \qquad i,j=1,2.
  \label{eq:noise-moment-order}
 \end{equation}
 Then 
 \begin{enumerate}[label=\textup{(\roman*)}]
 \item The constants \(c,c_1,C_0\) in \Cref{thm:imm-existence} can be chosen
 so that every invariant probability \(\pi\) satisfies
 \begin{equation}
  \int_E\bigl(b_{10}x^{r_{10}}+c_1b_{20}y^{r_{20}}
  -a_1x^{\theta_1}y^{\kappa_1}-c_1a_2x^{\kappa_2}y^{\theta_2}\bigr)
  \,\pi(\dd x,\dd y)=\iota_1+c_1\iota_2.
  \label{eq:stationary-balance}
 \end{equation}
 \item If \(a_1>0>a_2\) and
 \begin{equation}
  \theta_1=\kappa_2,\qquad \theta_2=\kappa_1,
  \label{eq:balanced-interactions}
 \end{equation}
 then \Cref{ass:weighted-dissipation}\textup{(ii)} holds automatically,
one may take \(c_1=a_1/(-a_2)\), and every invariant probability \(\pi\) satisfies
 \[
  b_{10}\int_E x^{r_{10}}\pi(\dd x,\dd y)
  +\frac{a_1}{-a_2}b_{20}\int_E y^{r_{20}}\pi(\dd x,\dd y)
  =\iota_1+\frac{a_1}{-a_2}\iota_2.
 \]
 \end{enumerate}
\end{corollary}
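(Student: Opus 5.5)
The plan is to apply the generator to the weighted linear Lyapunov function \(V(x,y)=x+c_1y\) used in \Cref{thm:imm-existence}. Under an invariant law we then compute \(\E_\pi[V(Z_t)]-\E_\pi[V(Z_0)]\) exactly, instead of only bounding it. Since \(\pi\) is invariant and \(\int V\,\dd\pi<\infty\) by \eqref{eq:imm-moment}, this difference is zero. Itô's formula gives
\[
V(Z_t)=V(Z_0)+\int_0^t\bigl(\iota_1+c_1\iota_2+a_1X_s^{\theta_1}Y_s^{\kappa_1}+c_1a_2Y_s^{\theta_2}X_s^{\kappa_2}-b_{10}X_s^{r_{10}}-c_1b_{20}Y_s^{r_{20}}\bigr)\dd s+M_t ,
\]
where \(M\) collects the Brownian and compensated Poisson integrals. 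The inequality in \eqref{eq:imm-moment} was presumably obtained by localization at \(\tau_R^+\) and Fatou, which loses equality. The task is therefore to show that \(M\) is a true martingale under \(\Pp_\pi\), and that the drift is \(\Pp_\pi\otimes\dd s\)-integrable. Once both hold, taking expectations and using stationarity, so that the \(s\)-integrand has the same law for every \(s\), yields \eqref{eq:stationary-balance}.

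Integrability of the drift comes first. The terms \(x^{r_{10}}\) and \(y^{r_{20}}\) are \(\pi\)-integrable by \eqref{eq:imm-moment}. For the interaction terms, in regime (iii) \eqref{eq:cooperative-powers} and Young's inequality give \(x^{\theta_1}y^{\kappa_1}\le C(1+x^{r_{10}}+y^{r_{20}})\), and likewise for the second interaction term. In regimes (i) and (ii) the negative interaction terms have a sign, so the first line of \eqref{eq:imm-moment} already bounds their integral. The positive term \(a_1x^{\theta_1}y^{\kappa_1}\) is controlled by the same Young argument with the exponents in \eqref{eq:mixed-parameter-condition}; if needed I would re-derive \(c_1\) so that the interaction integral is finite.

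The martingale property is the main obstacle, and it is where \eqref{eq:noise-moment-order} enters. The Brownian part has quadratic variation \(\int_0^t 2b_{11}X_s^{r_{11}}\dd s\) plus the analogous \(Y\)-term. Since \(r_{11}\le r_{10}\), we have \(x^{r_{11}}\le 1+x^{r_{10}}\), so under stationarity its expectation is finite and the stochastic integral is an \(L^2\)-martingale. For the jump part, \(\mu_1\) has no second moment, so I would split the jumps at \(z=1\). Small jumps give an \(L^2\) martingale with compensator \(\int_0^t b_{12}X_s^{r_{12}}\int_0^1z^2\mu_1(\dd z)\dd s\). Large jumps give an \(L^1\) martingale, because \(\E\int_0^t\int_1^\infty z\,b_{12}X_s^{r_{12}}\mu_1(\dd z)\dd s<\infty\) again by \(r_{12}\le r_{10}\) and \(\alpha_1>1\). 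Both bounds use stationarity to replace \(\E_\pi X_s^{r_{12}}\) by \(\int x^{r_{12}}\dd\pi\). Conservativeness, already part of \Cref{thm:imm-existence}, ensures that no killing term appears. Taking expectations and dividing by \(t\) gives (i) with the same constants \(c,c_1,C_0\), possibly adjusted as noted above.

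For (ii), substitute \eqref{eq:balanced-interactions}. With \(\theta_1=\kappa_2\) and \(\kappa_1=\theta_2\) the two interaction monomials coincide: \(x^{\theta_1}y^{\kappa_1}=x^{\kappa_2}y^{\theta_2}\). The choice \(c_1=a_1/(-a_2)\) then makes \(-a_1x^{\theta_1}y^{\kappa_1}-c_1a_2x^{\kappa_2}y^{\theta_2}\) vanish identically, and (i) reduces to the displayed identity. It remains to check that \Cref{ass:weighted-dissipation}(ii) holds automatically. Writing \(s=\theta_1/r_{10}+\kappa_1/r_{20}\), the balance condition makes the second bracket equal to \(s\) as well, and the ratio \(\theta_1/\kappa_2\wedge\kappa_1/\theta_2\) equals \(1\). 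The left side of \eqref{eq:mixed-parameter-condition} is therefore \(s-(s-1)^+\le1\). We must also make sure that this \(c_1\) is admissible in the proof of \Cref{thm:imm-existence}. This is automatic, since the interaction cancels exactly, and the Lyapunov drift is then \(\iota_1+c_1\iota_2-b_{10}x^{r_{10}}-c_1b_{20}y^{r_{20}}\), which is coercive.
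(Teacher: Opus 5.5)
Your proposal is correct and follows the paper's proof. It uses the same three ingredients: It\^o's formula for the weighted linear function, the martingale property of \(M\) under \(\Pp_\pi\) obtained by splitting the jumps at \(z=1\) and using \eqref{eq:noise-moment-order} with stationarity, and pointwise cancellation with \(c_1=a_1/(-a_2)\) for (ii).

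The only difference is drift integrability. You argue it term by term. In regime (ii) that step is circular as written, because the first line of \eqref{eq:imm-moment} also contains \(-a_1x^{\theta_1}y^{\kappa_1}\). It closes only if you take \(c_1|a_2|\) strictly larger than the constant \(K\) from the Young bound in \Cref{lem:parameter-drift}. The paper avoids this: it gets \(\cL V_0\in L^1(\pi)\) at once, since \(\iota_1+c_1\iota_2+C_0-\cL V_0\ge0\) has finite \(\pi\)-integral by \Cref{lem:common-stationary-moments}.
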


For interaction terms with the same sign, the two coordinate equations give separate stationary balance identities.
\begin{corollary}
\label{cor:individual-interaction-balances}
Assume \eqref{eq:imm-regularity}, \eqref{eq:noise-moment-order} and
\Cref{ass:weighted-dissipation} hold. If \(a_1a_2>0\), then with all integrals finite, every
invariant probability \(\pi\) satisfies the following:
\begin{equation}
 \left\{
 \begin{aligned}
 b_{10}\int_E x^{r_{10}}\pi(\dd x, \dd y)
 &=\iota_1+a_1\int_E x^{\theta_1}y^{\kappa_1}\pi(\dd x, \dd y),\\
 b_{20}\int_E y^{r_{20}}\pi(\dd x, \dd y)
 &=\iota_2+a_2\int_E x^{\kappa_2}y^{\theta_2}\pi(\dd x, \dd y).
 \end{aligned}
 \right.
 \label{eq:individual-interaction-balances}
\end{equation}
\end{corollary}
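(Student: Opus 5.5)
The plan is to obtain each identity in \eqref{eq:individual-interaction-balances} as a stationary balance for a single coordinate. For the first identity we apply the generator to the linear test function \(f(x,y)=x\) and integrate against \(\pi\); the second uses \(f(x,y)=y\) in the same way. Because \(x\) is unbounded, we work with a concave truncation, for instance \(f_n(x,y)=n\phi(x/n)\) with \(\phi\in C^2\) concave, nondecreasing, \(\phi(u)=u\) for \(u\le1\), and \(\phi\) constant for \(u\ge2\). Then \(f_n\in C_b^2\), and invariance gives
\[
\int_E \cL f_n\,\dd\pi=0,
\]
where \(\cL\) is the generator of \eqref{eq:model}. This identity follows from It\^o's formula, the fact that the martingale parts are genuine martingales for bounded \(f_n\) with bounded derivatives, integration in \(\pi\), and Fubini. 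We need conservativeness here, and it is provided by \Cref{thm:imm-existence} under \Cref{ass:weighted-dissipation}.

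Next we pass to the limit \(n\to\infty\). We have
\[
\cL f_n=\phi'(x/n)\bigl(\iota_1+a_1x^{\theta_1}y^{\kappa_1}-b_{10}x^{r_{10}}\bigr)+\frac{b_{11}}{n}\phi''(x/n)x^{r_{11}}+J_n,
\]
where \(J_n\) is the jump term:
\[
J_n=b_{12}x^{r_{12}}\int_0^\infty\bigl[f_n(x+z)-f_n(x)-\phi'(x/n)z\bigr]\mu_1(\dd z)\le0
\]
by concavity.

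Integrability comes from \Cref{thm:imm-existence}, which gives \(\int(x^{r_{10}}+y^{r_{20}})\,\dd\pi<\infty\). Since \(a_1a_2>0\), the first inequality in \eqref{eq:imm-moment} needs care. In the competitive case \(a_i<0\), the interaction moments are bounded directly by that inequality, because all its terms are then nonnegative. In the cooperative case we instead use Young's inequality together with \eqref{eq:cooperative-powers}:
\[
x^{\theta_1}y^{\kappa_1}\le C\bigl(1+x^{r_{10}}+y^{r_{20}}\bigr).
\]
Thus \(x^{\theta_1}y^{\kappa_1}\) and \(x^{\kappa_2}y^{\theta_2}\) are \(\pi\)-integrable, which is the finiteness claim.

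By dominated convergence, the drift term converges to \(\int(\iota_1+a_1x^{\theta_1}y^{\kappa_1}-b_{10}x^{r_{10}})\,\dd\pi\).

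The diffusion term is supported on \(\{n\le x\le2n\}\) and is bounded by \(C x^{r_{11}-1}\1_{\{x\ge n\}}\). Since \(r_{11}\le r_{10}\) by \eqref{eq:noise-moment-order}, this is dominated by \(Cx^{r_{10}}\), which is integrable. It therefore tends to zero by dominated convergence, since for each fixed \(x\) the term vanishes once \(n>x\).

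The jump term is the main obstacle, since the stable tail gives only moments of order below \(\alpha_1\). Split the integral at \(z=n\). For \(z\le n\), use the Taylor bound \(|\phi''|z^2/n\), which gives
\[
\frac{C}{n}\int_0^n z^{2}\,\mu_1(\dd z)\le Cn^{1-\alpha_1}.
\]
For \(z>n\), use \(|f_n(x+z)-f_n(x)|\le z\), \(\phi'\le1\), and \(\int_n^\infty z\,\mu_1(\dd z)\le Cn^{1-\alpha_1}\). Hence
\[
|J_n|\le Cb_{12}x^{r_{12}}n^{1-\alpha_1}.
\]
For \(x\ge n\) this is at most \(Cx^{r_{12}+1-\alpha_1}\le C(1+x^{r_{10}})\), using \(r_{12}\le r_{10}\) and \(\alpha_1>1\). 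For \(x<n/2\) the full second-order remainder vanishes for \(z<n/2\), and the bound above still tends to zero pointwise. So dominated convergence gives \(\int J_n\,\dd\pi\to0\).

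This yields the first equation of \eqref{eq:individual-interaction-balances}; the second follows by symmetry. An alternative would be to write each identity as the difference of \eqref{eq:stationary-balance} and the other equation, but the direct limit argument above avoids needing a particular \(c_1\).
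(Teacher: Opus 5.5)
Your argument is correct, but it takes a different route from the paper's. The paper does not truncate. It takes $\Pp_\pi$-expectations directly in each coordinate equation of \eqref{eq:model}, relying on \Cref{lem:stationary-noise-martingale}, where \eqref{eq:noise-moment-order} and the stationary moment bound make the Brownian, small-jump and large-jump integrals true martingales. For integrability of the interaction terms, it uses $\cL V_0\in L^1(\pi)$ from the proof of \Cref{cor:stationary-balance}, which gives $a_1x^{\theta_1}y^{\kappa_1}+c_1a_2x^{\kappa_2}y^{\theta_2}\in L^1(\pi)$, and then splits this sum using the common sign.

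You instead use the concave truncation $n\phi(x/n)$, as in \Cref{lem:sharp-stationary-tests}. The martingale property is then automatic for each $n$, and \eqref{eq:noise-moment-order} enters only to dominate the second-order corrections, of order $x^{r_{11}-1}\mathbf 1_{\{x\ge n\}}$ and $x^{r_{12}}n^{1-\alpha_1}$, in the limit. For integrability you use the sign directly in the competitive case and the pointwise Young bound from \eqref{eq:cooperative-powers} in the cooperative case, which avoids going through $\cL V_0$. The paper's route is shorter because the martingale lemma is already available; yours is self-contained and shows explicitly why the heavy stable tail causes no trouble.

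Three small points:
\begin{enumerate}
\item $\cL f_n$ is not bounded on $E$, since $\phi'(x/n)\,a_1x^{\theta_1}y^{\kappa_1}$ is unbounded in $y$. The identity $\int_E\cL f_n\,\dd\pi=0$ must therefore come after your integrability step, which is what makes the Fubini argument legitimate.
\item Your case split for $J_n$ is unnecessary. For $n\ge1$, the bound $|J_n|\le Cb_{12}x^{r_{12}}n^{1-\alpha_1}\le Cb_{12}x^{r_{12}}\le C(1+x^{r_{10}})$ already gives a uniform dominating function.
\item Conservativeness comes from \Cref{lem:imm-lyapunov}, not from \Cref{thm:imm-existence}.
\end{enumerate}
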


\subsection{Stationary coexistence for square-root diffusions}
\label{sec:sqrt-results}

Set \(b_{12}=b_{22}=0\), \(r_{11}=r_{21}=1\), and
\(b_{11},b_{21}>0\). Equation \eqref{eq:model} becomes
\begin{equation}
 \left\{
 \begin{aligned}
 X_t={}&x_0+\int_0^t
 (\iota_1+a_1X_s^{\theta_1}Y_s^{\kappa_1}
          -b_{10}X_s^{r_{10}})\dd s
 +\int_0^t\sqrt{2b_{11}X_s}\dd B_1(s),\\
 Y_t={}&y_0+\int_0^t
 (\iota_2+a_2Y_s^{\theta_2}X_s^{\kappa_2}
          -b_{20}Y_s^{r_{20}})\dd s
 +\int_0^t\sqrt{2b_{21}Y_s}\dd B_2(s).
 \end{aligned}
 \right.
 \label{eq:sqrt-system}
\end{equation}

\begin{corollary}
\label{cor:sqrt-main}
Assume \eqref{eq:imm-regularity} and
\Cref{ass:weighted-dissipation} hold for \eqref{eq:sqrt-system} with \(\iota_1,\iota_2>0\). Then
\begin{enumerate}[label=\textup{(\roman*)}]
\item The process $Z$ admits an invariant probability.
Every invariant probability \(\pi\) is concentrated on \(E^\circ\),
has support \(E\), and satisfies \eqref{eq:imm-moment}
and \eqref{eq:stationary-balance}.
\item If
\begin{equation}
 \iota_1\ge b_{11},\qquad \iota_2\ge b_{21},
 \label{eq:Feller-thresholds}
\end{equation}
then the invariant probability \(\pi\) is unique and
\(\Pi_T^{(x_0,y_0)}\Rightarrow\pi\) as \(T\to\infty\)
for every \((x_0,y_0)\in E\).
\end{enumerate}
\end{corollary}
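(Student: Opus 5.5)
Part (i) starts from \Cref{thm:imm-existence}, which gives existence and \eqref{eq:imm-moment} for every invariant probability. Since \(r_{11}=r_{21}=1\le r_{i0}\) and \(b_{12}=b_{22}=0\), condition \eqref{eq:noise-moment-order} holds, so \Cref{cor:stationary-balance} yields \eqref{eq:stationary-balance}. What remains for (i) is to show that every invariant \(\pi\) satisfies \(\pi(\partial E)=0\) and \(\supp\pi=E\).

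\emph{Interior concentration.} Set \(T_0:=\inf\{t: X_t\wedge Y_t=0\}\). I claim that for every initial state the time spent on \(\partial E\) has Lebesgue measure zero a.s. Then
\[
\pi(\partial E)=\frac1T\int_0^T\E_\pi[\1_{\partial E}(Z_t)]\dd t=0 .
\]
To prove the claim, fix \(x\) and look at times when \(X_t=0\). Near \(x=0\) with \(y\) bounded, the drift of \(X\) is \(\iota_1+a_1x^{\theta_1}y^{\kappa_1}-b_{10}x^{r_{10}}\ge \iota_1/2>0\) for \(x\) small, uniformly on compact \(y\)-sets. The diffusion coefficient is \(2b_{11}x\). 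Occupation-time formula argument: by Itô--Tanaka, \(\int_0^t\1_{\{X_s=0\}}\dd\langle X\rangle_s=0\) is automatic because \(\dd\langle X\rangle_s=2b_{11}X_s\dd s\). The direct route is different: \(\int_0^t \1_{\{X_s=0\}}\dd X_s=\int_0^t\1_{\{X_s=0\}}\iota_1\dd s\), and the left side equals zero, since the local time at \(0\) of a nonnegative semimartingale satisfies \(\int\1_{\{X=0\}}\dd X=\tfrac12 L^0\)-type identities. The cleaner way is to use \(X^+=X\) and Tanaka's formula, which give
\[
\int_0^t\1_{\{X_s=0\}}\dd X_s=-\tfrac12L_t^0\le 0 .
\]
Since the left side equals \(\iota_1\int_0^t\1_{\{X_s=0\}}\dd s\ge0\), both sides vanish. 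Thus \(\mathrm{Leb}\{s:X_s=0\}=0\) a.s., and likewise for \(Y\).

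\emph{Full support.} Since \(\pi(E^\circ)=1\), some interior point \(z_0\) has every neighborhood charged. In \(E^\circ\) the diffusion matrix \(\operatorname{diag}(2b_{11}x,2b_{21}y)\) is nondegenerate and the coefficients are smooth, so the Stroock--Varadhan support theorem applies up to exit from compact subsets of \(E^\circ\). For any target \(w\in E^\circ\), a smooth control path from \(z_0\) to \(w\) inside \(E^\circ\) then gives \(P_t(z,B(w,\varepsilon))>0\) for \(z\) near \(z_0\). It follows that \(\pi(B(w,\varepsilon))=\int P_t(z,B(w,\varepsilon))\pi(\dd z)>0\). Hence \(\supp\pi\supset\overline{E^\circ}=E\). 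Lower semicontinuity of \(z\mapsto P_t(z,U)\) for open \(U\) follows from weak Feller continuity, established in the proof of \Cref{thm:imm-existence}.

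Part (ii). Under \eqref{eq:Feller-thresholds} I show that the boundary is never hit from \(E^\circ\) and is left instantly from \(\partial E\). This uses localized comparison with CIR processes. On \(\{Y\le R\}\), \(X\) dominates pathwise the solution of
\[
\dd\widetilde X=(\iota_1-b_{10}\widetilde X^{r_{10}}-|a_1|\widetilde X^{\theta_1}R^{\kappa_1})\dd t+\sqrt{2b_{11}\widetilde X}\dd B_1 ,
\]
whose drift near zero is \(\iota_1-O(x)\). With \(\iota_1\ge b_{11}\), the scale function \(s'(x)\sim x^{-\iota_1/b_{11}}\) is non-integrable at \(0\), so Feller's test shows that zero is not reached. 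Hence \(Z\) lives in \(E^\circ\) for \(t>0\).

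Uniqueness then follows because distinct ergodic invariant probabilities are mutually singular. The strong Feller property and irreducibility on \(E^\circ\) (local ellipticity gives smooth positive transition densities on \(E^\circ\), by Hörmander, localized) make any two invariant probabilities equivalent to Lebesgue measure on \(E^\circ\). Hence there is only one.

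Convergence \(\Pi_T^{(x_0,y_0)}\Rightarrow\pi\) comes in two steps. First, the Lyapunov bound from the proof of \Cref{thm:imm-existence} gives tightness of \(\{\Pi_T\}\). Second, by weak Feller every limit point is invariant (Krylov--Bogoliubov), so it equals \(\pi\).

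The main obstacle is the strong-Feller/irreducibility step near the degenerate boundary, and starting from \(\partial E\). For this I would first try localization: the process enters \(E^\circ\) immediately, because \(\iota_i>0\) gives positive drift at zero. Then apply interior positivity of densities.
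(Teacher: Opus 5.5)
\paragraph{Gap in the interior-concentration step.} Your proof that $Z$ spends Lebesgue-null time on $\partial E$ fails as written because of a sign error. For a continuous nonnegative semimartingale, Tanaka's formula for $X^-\equiv0$ reads $0=-\int_0^t\1_{\{X_s\le0\}}\dd X_s+\tfrac12L_t^0(X)$, so
\[
\int_0^t\1_{\{X_s=0\}}\dd X_s=\tfrac12L_t^0(X)\ge0,
\]
with the opposite sign to yours. Your computation that the left side equals $\iota_1\int_0^t\1_{\{X_s=0\}}\dd s$ is correct: the stochastic integral has a vanishing integrand there, and since $\theta_1\ge1$ the drift at $x=0$ is exactly $\iota_1$. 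Combined with the corrected sign, it only gives
\[
\iota_1\,\mathrm{Leb}\{s\le t:X_s=0\}=\tfrac12L_t^0(X).
\]
Both sides are nonnegative, and nothing forces them to vanish. The same identity holds for sticky reflected processes, where both sides are positive.

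The missing ingredient is that the local time at zero vanishes, which comes from the degeneracy of the square-root noise. Since $\dd\langle X\rangle_s=2b_{11}X_s\dd s$, the occupation-times formula gives
\[
\int_0^\infty a^{-1}L_t^a\dd a=2b_{11}\int_0^t\1_{\{X_s>0\}}\dd s<\infty .
\]
Right-continuity of $a\mapsto L_t^a$ then forces $L_t^0(X)=0$; this is the zero-local-time criterion the paper uses in \Cref{prop:imm-wellposed}. With this added, $\mathrm{Leb}\{s:X_s=0\}=0$ almost surely, likewise for $Y$. Your Fubini argument under $\Pp_\pi$ then gives $\pi(\partial E)=0$.

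\paragraph{Comparison with the paper.} Once repaired, your route genuinely differs from the paper's. The paper proves the stronger fixed-time statement $P_t(z,E^\circ)=1$ for all $z\in E$ and $t>0$ (\Cref{prop:sqrt-boundary}(i)). It does this by localized comparison with CIR diffusions and the absence of an atom at zero in CIR marginals. Your version needs no comparison but only controls time averages, which is enough for (i).

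Your other steps agree with the paper in substance:
\begin{itemize}
\item full support, via the localized support theorem and lower semicontinuity from weak Feller;
\item pathwise positivity under \eqref{eq:Feller-thresholds}, via Feller's test;
\item convergence of $\Pi_T^{(x_0,y_0)}$, via tightness and Krylov--Bogoliubov.
\end{itemize}

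For uniqueness, you do not need strictly positive Lebesgue densities, and localized H\"ormander gives smoothness, not positivity, so that claim would need its own argument. As in the paper, strong Feller together with full support of each $P_t(z,\cdot)$ on $E^\circ$ already makes all $P_{2t}(z,\cdot)$ mutually equivalent, and Doob's theorem then gives uniqueness.
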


{For \eqref{eq:sqrt-system} with \(r_{10}=r_{20}=2\)
and \(\theta_i=\kappa_i=1\) for \(i=1,2\),}
positive immigration ensures the existence of an invariant
probability concentrated on \(E^\circ\) for arbitrary competitive
interaction strengths, and for cooperative interactions satisfying
\(a_1a_2<b_{10}b_{20}\).
In both cases, the invariant probability is unique under
\eqref{eq:Feller-thresholds}.

The rest of the paper is organized as follows.
\Cref{sec:preliminaries} records the generator and the common
drift, nonexplosion, and stationary moment estimates used in the proofs of the main results.
\Cref{sec:noimm-proofs} studies the stationary behavior of the models without immigration.
{
\Cref{sec:imm-proofs} proves existence of invariant probabilities
and stationary balance identities for models with immigration.
} \Cref{sec:sqrt-proofs} proves \Cref{cor:sqrt-main}.
The appendix contains the required existence, uniqueness, and
boundary continuation results.

\section{Preliminaries}
\label{sec:preliminaries}

The estimates in this section will be used for the models with and without immigration.
We therefore allow arbitrary \(a_1,a_2\in\R\) and \(\iota_1,\iota_2\ge0\).
The operator \(\cL\) associated with \eqref{eq:model} is given, for \((x,y)\in E^\circ\), by
\beqlb\label{eq:generator}
 \cL f(x,y)\ar=\ar
 \bigl(\iota_1+a_1x^{\theta_1}y^{\kappa_1}-b_{10}x^{r_{10}}\bigr)f_x'(x,y)
 +\bigl(\iota_2+a_2y^{\theta_2}x^{\kappa_2}-b_{20}y^{r_{20}}\bigr)f_y'(x,y)
 \cr
 \ar\ar +b_{11}x^{r_{11}}f_{xx}''(x,y)
 +b_{21}y^{r_{21}}f_{yy}''(x,y)\cr
 \ar\ar +b_{12}x^{r_{12}}\int_0^\infty
 [f(x+z,y)-f(x,y)-zf_x'(x,y)]\,\mu_1(\dd z)\cr
 \ar\ar +b_{22}y^{r_{22}}\int_0^\infty
 [f(x,y+z)-f(x,y)-zf_y'(x,y)]\,\mu_2(\dd z)
\eeqlb
for \(f\in C^2(E^\circ)\) such that the above integrals are finite.
Here subscripts denote partial derivatives. Fix \((x_0,y_0)\in E^\circ\) and
\(0<\varepsilon<x_0\wedge y_0\le x_0\vee y_0<N\). When $\cL f$ is bounded on $[\epsilon, N]^2$,
by It\^o's formula, the process
\[
 f(Z_{t\wedge\tau_\varepsilon^-\wedge\tau_N^+})
 -f(x_0,y_0)
 -\int_0^{t\wedge\tau_\varepsilon^-\wedge\tau_N^+}
       \cL f(Z_s)\dd s
\]
is a local martingale. If, in addition, \(f\ge0\), by Fatou's lemma and the localizing  sequence of stopping times for a local martingale, we obtain
\beqlb\label{eq:localized-dynkin-inequality}
 \E_{x_0,y_0}f(Z_{t\wedge\tau_\varepsilon^-\wedge\tau_N^+}) \le f(x_0,y_0)
 +\E_{x_0,y_0}\int_0^{t\wedge\tau_\varepsilon^-\wedge\tau_N^+}
       \cL f(Z_s)\dd s.
 \eeqlb
In particular, for every \(f\in C_b^2(E^\circ)\), $\cL f$ is bounded on $[\epsilon, N]^2$, the process
\begin{equation}
t \mapsto  f(Z_{t\wedge\tau_\varepsilon^-\wedge\tau_N^+})
 -f(x_0,y_0)
 -\int_0^{t\wedge\tau_\varepsilon^-\wedge\tau_N^+}
       \cL f(Z_s)\dd s \quad \text{is a martingale}.
 \label{eq:localized-dynkin-martingale}
\end{equation}

{Without immigration, for} the CWP $Z$ under \eqref{eq:boundary-compatible},
we also use \(\cL\) on each open axis for the one-dimensional
operator acting on the restriction of the test function to that
axis.  At the absorbing origin, we set \(\cL f(0,0)=0\).

Under \eqref{eq:imm-regularity}, the generator
\eqref{eq:generator} is also well defined on \(E\) for any
test function \(f\) that admits a \(C^2\) extension to an
open neighbourhood of \(E\) and satisfies the above integral
bound for every compact \(K\subset E\).
For such \(f\), the coefficients and \(\cL f\) are bounded
on each square \([0,N]^2\). Localizing at \(\tau_N^+\)
therefore yields the same local-martingale identity,
together with the corresponding inequality when \(f\ge0\)
and the martingale identity when \(f\) is bounded.

 For \(c_1>0\) and \((x,y)\in E\), let 
\beqlb\label{SVW}
S(x,y):=x+c_1y,\quad  V_0(x,y):=1+S(x,y), \quad W_0(x,y):=x^{r_{10}}+y^{r_{20}}.
\eeqlb 
The weight \(c_1\) is chosen separately in each application.
By \eqref{eq:generator},
\beqlb\label{eq:weighted-generator-expansion}
 \cL S(x,y)= \cL V_0(x,y) = \iota_1+c_1\iota_2-b_{10}x^{r_{10}}-c_1b_{20}y^{r_{20}}+a_1x^{\theta_1}y^{\kappa_1}
 +c_1a_2x^{\kappa_2}y^{\theta_2}.
 \eeqlb

The next three lemmas provide common drift and moment estimates.
They also apply to the cooperative model without immigration:
under \eqref{eq:cooperative-regularity} and
\Cref{ass:cooperative-dissipation},
\eqref{eq:cooperative-powers} implies
\[
 r_{10}> \theta_1\vee \kappa_2 \ge1,
 \qquad r_{20}> \kappa_1\vee \theta_2 \ge1.
\]
Thus \eqref{eq:imm-regularity} and
\Cref{ass:weighted-dissipation}\textup{(iii)} hold { under \eqref{eq:cooperative-regularity} and
	\Cref{ass:cooperative-dissipation}} when
\(\iota_1=\iota_2=0\).

\begin{lemma}
\label{lem:parameter-drift}
Assume that \eqref{eq:imm-regularity} and
\Cref{ass:weighted-dissipation} hold. Then there exist constants
\(c, c_1>0\) and \(C_0\ge0\) such that the functions \(V_0, W_0\) defined by \eqref{SVW}
satisfy
\begin{equation}
 \cL V_0(x,y)\le \iota_1+c_1\iota_2+C_0
 -cW_0(x,y),
 \qquad (x,y)\in E.
 \label{eq:weighted-dissipation}
\end{equation}
\end{lemma}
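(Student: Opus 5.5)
From \eqref{eq:weighted-generator-expansion}, $\cL V_0=\iota_1+c_1\iota_2-b_{10}x^{r_{10}}-c_1b_{20}y^{r_{20}}+a_1x^{\theta_1}y^{\kappa_1}+c_1a_2x^{\kappa_2}y^{\theta_2}$. So it suffices to choose $c_1>0$ and $c>0$ such that
\[
 G(x,y):=a_1x^{\theta_1}y^{\kappa_1}+c_1a_2x^{\kappa_2}y^{\theta_2}
 -(b_{10}-c)x^{r_{10}}-(c_1b_{20}-c)y^{r_{20}}
\]
is bounded above on $E$, and then set $C_0:=\sup_E G^+$. The main tool is the weighted Young inequality
\[
 x^{\theta}y^{\kappa}\le \varepsilon x^{r_{10}}+C_\varepsilon y^{r_{20}\kappa/(r_{20}(1-\theta/r_{10}))},
\]
and its variants. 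When $\theta/r_{10}+\kappa/r_{20}<1$, we get $x^\theta y^\kappa\le \varepsilon(x^{r_{10}}+y^{r_{20}})+C_\varepsilon$ for every $\varepsilon>0$. In the equality case we instead get the sharp bound $x^\theta y^\kappa\le \frac{\theta}{r_{10}}\lambda^{r_{10}/\theta}x^{r_{10}}+\frac{\kappa}{r_{20}}\lambda^{-r_{20}/\kappa}y^{r_{20}}$ for any $\lambda>0$. I will treat the three cases of \Cref{ass:weighted-dissipation} separately.

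\textbf{Case (i)}, $a_1,a_2\le0$. Both interaction terms are nonpositive. Take $c_1=1$ and $c=\tfrac12\min(b_{10},b_{20})$; then $G\le0$.

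\textbf{Case (iii)}, cooperative. Set $p_1=\theta_1/r_{10}$, $p_2=\kappa_1/r_{20}$, $s_1=\kappa_2/r_{10}$, $s_2=\theta_2/r_{20}$. If a sum is strictly below $1$, the Young estimate absorbs that term with an arbitrarily small $\varepsilon$ times $W_0$, for any fixed $c_1$. Suppose both equalities hold in \eqref{eq:cooperative-powers}. Then I need $c_1,\lambda,\mu$ such that
\[
 a_1p_1\lambda^{1/p_1}+c_1a_2s_1\mu^{1/s_1}<b_{10},
 \qquad
 a_1p_2\lambda^{-1/p_2}+c_1a_2s_2\mu^{-1/s_2}<c_1b_{20}.
\]
The cleanest route is to maximize each monomial against the weighted sum directly. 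The condition is that there exist $u,v>0$ (playing the role of $x^{r_{10}},y^{r_{20}}$ at a ray) with
\[
 a_1u^{p_1}v^{p_2}<b_{10}u\quad\text{and}\quad a_2u^{s_1}v^{s_2}<b_{20}v,
\]
because $G$ is then negative-definite homogeneous in $(u,v)$ after suitable weighting. Using $p_1=1-p_2$ and $s_2=1-s_1$, this reads $(v/u)^{p_2}<b_{10}/a_1$ and $(u/v)^{s_1}<b_{20}/a_2$. Such a ratio exists iff $(a_1/b_{10})^{s_1}(a_2/b_{20})^{p_2}<1$, which is exactly \eqref{eq:cooperative-critical}. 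Given such $(u,v)$, I choose $\lambda$ and $\mu$ so that the Young equality points sit on that ray, and take $c_1$ as the appropriate ratio of the resulting slacks. Verifying that the two inequalities are simultaneously achievable is the main computational obstacle. I would first try the Perron-type argument: the $2\times2$ matrix of optimal Young constants must have spectral radius less than $1$, and this is equivalent to \eqref{eq:cooperative-critical}.

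\textbf{Case (ii)}, $a_1>0\ge a_2$. If $a_2=0$, condition \eqref{eq:mixed-parameter-condition} reads $p_1+p_2\le1$. The strict case is Young as above. For equality, choose $\lambda$ small so that $a_1p_1\lambda^{1/p_1}<b_{10}/2$, and then take $c_1$ large to absorb the $y^{r_{20}}$ coefficient. If $a_2<0$, the term $-c_1|a_2|x^{\kappa_2}y^{\theta_2}$ is available to help. When $s_1+s_2>1$, it dominates $W_0$ along part of the quadrant, and I split $E$ into the cone regions $\{y^{r_{20}}\lessgtr x^{r_{10}}\}$ (suitably scaled). In the region where $W_0$ fails to control $x^{\theta_1}y^{\kappa_1}$, I would write
\[
 x^{\theta_1}y^{\kappa_1}=(x^{\kappa_2}y^{\theta_2})^{t}\,x^{\theta_1-t\kappa_2}y^{\kappa_1-t\theta_2},
 \qquad t=\frac{\theta_1}{\kappa_2}\wedge\frac{\kappa_1}{\theta_2},
\]
and apply a three-term Young inequality to the monomials $x^{\kappa_2}y^{\theta_2}$, $x^{r_{10}}$ and $y^{r_{20}}$. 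The exponent bookkeeping, in which the scaling degree of the leftover factor is governed by $p_1+p_2-(s_1+s_2-1)t$, reproduces \eqref{eq:mixed-parameter-condition}. Taking $c_1$ large then makes the negative interaction dominate and yields boundedness of $G$. The borderline equality in \eqref{eq:mixed-parameter-condition} will require using the large factor $c_1$ to win the constant comparison.
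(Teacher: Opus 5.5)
Your route is sound, and in cases (i) and (ii) it is essentially the paper's argument. In (ii) the paper uses exactly your exponent $t=\lambda=(\theta_1/\kappa_2)\wedge(\kappa_1/\theta_2)$, with $\lambda=0$ when $s_1+s_2\le1$. It works globally rather than on cones: with $u=x^{r_{10}}$, $v=y^{r_{20}}$,
\[
 a_1u^{p_1}v^{p_2}
 =a_1u^{p_1-\lambda s_1}v^{p_2-\lambda s_2}\bigl(u^{s_1}v^{s_2}\bigr)^{\lambda}
 \le a_1u^{p_1-\lambda s_1}\bigl(1+v+u^{s_1}v^{s_2}\bigr)^{1-p_1+\lambda s_1}.
\]
Two-term Young then gives $\frac{b_{10}}2u+K(1+v+u^{s_1}v^{s_2})$, and $c_1\ge\max\{2K/b_{20},K/|a_2|\}$ finishes. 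So the cone splitting is unnecessary, and the borderline equality in \eqref{eq:mixed-parameter-condition} needs no constant comparison. The one point to check is $p_1-\lambda s_1<1$, which follows from the same exponent count because $\lambda+(p_2-\lambda s_2)>0$.

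The real difference is the critical cooperative case, and there the step you flagged as the main obstacle closes in two lines, without Perron--Frobenius. The paper reduces to one variable: $\cL V_0=\iota_1+c_1\iota_2-vF(u/v)$ with the convex function $F(t)=b_{10}t+c_1b_{20}-a_1t^{p_1}-c_1a_2t^{s_1}$. It takes $t_0=(a_1/b_{10})^{1/p_2}$ and $c_1=b_{10}p_2t_0^{1-s_1}/(a_2s_1)$, so that $F'(t_0)=0$ and $F(t_0)=c_1(b_{20}-a_2t_0^{s_1})>0$ by \eqref{eq:cooperative-critical}. Your tangent-point Young bounds are the dual form of this. Fix $\rho=v_0/u_0$ with $\alpha:=a_1\rho^{p_2}/b_{10}<1$ and $\beta:=a_2\rho^{-s_1}/b_{20}<1$; such a $\rho$ exists exactly under \eqref{eq:cooperative-critical}. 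Make both Young bounds tight on this ray. Using $p_1+p_2=s_1+s_2=1$, your two required inequalities become
\[
 p_1\alpha A+s_1\beta B<A,
 \qquad
 p_2\alpha A+s_2\beta B<B,
 \qquad A:=b_{10},\quad B:=c_1\rho b_{20}.
\]
An admissible $c_1$ therefore exists iff $p_2\alpha/(1-s_2\beta)<(1-p_1\alpha)/(s_1\beta)$, that is, $p_2s_1\alpha\beta<(1-p_1\alpha)(1-s_2\beta)$. This holds because the right side exceeds $(1-p_1)(1-s_2)=p_2s_1$. The strict inequalities leave slack, which gives $c>0$ with $C_0=0$.

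The paper's choice sits at the endpoint $\alpha=1$ and gets positivity from convexity rather than Young slack. Its advantage is an explicit $c_1$. Yours makes the meaning of \eqref{eq:cooperative-critical} transparent: there is a common ray on which each self-regulation term beats its cooperation term.

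Finally, you did not write out the mixed cooperative cases, $p_1+p_2=1>s_1+s_2$ or the reverse. They go as in your $a_2=0$ argument, provided the order of choices is right. Fix the Young weight first, then $c_1$ (large, resp. small), and only then the $\varepsilon$ for the strictly subcritical term, since that $\varepsilon$ must depend on $c_1$.
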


\begin{proof}
It suffices to consider \(x,y>0\). Let
\[
 u=x^{r_{10}},\quad v=y^{r_{20}},\qquad
 p=\frac{\theta_1}{r_{10}},\quad q=\frac{\kappa_1}{r_{20}},\quad
 r=\frac{\kappa_2}{r_{10}},\quad s=\frac{\theta_2}{r_{20}}.
\]
 By \eqref{eq:weighted-generator-expansion}, 
\beqlb\label{eq1005a}
 \cL V_0(x,y)=\iota_1+c_1\iota_2-b_{10}u-c_1b_{20}v
             +a_1u^pv^q+c_1a_2u^rv^s.
\eeqlb
 If \(a_1,a_2\le0\), every \(c_1>0\) is admissible
and \(C_0=0\). If \(a_1>0>a_2\) and
\eqref{eq:balanced-interactions} holds, the interaction terms
cancel for \(c_1=a_1/(-a_2)\), and again \(C_0=0\).
It remains to consider \(a_1>0\). Suppose first that \(a_2<0\), and set
\[
 \lambda=
 \begin{cases}
 (p/r)\wedge(q/s),&r+s>1,\\
 0,&r+s\le1.
 \end{cases}
\]
By \eqref{eq:mixed-parameter-condition},
\[
 p-\lambda r\ge0,\qquad q-\lambda s\ge0,\qquad
 (p-\lambda r)+(q-\lambda s)+\lambda\le1.
\]
By the above, we have $p -\lambda r < 1$. In fact, 
we have $p -\lambda r \le 1 - \lambda - (q - \lambda s) \le 1 - \lambda < 1$ if $\lambda > 0$; for the case of $\lambda = 0$, since \(q=\kappa_1/r_{20}>0\) by \eqref{eq:imm-regularity}, we have $p \le 1 - q < 1$. By
Young's inequality, one obtains
\beqnn 
 a_1u^pv^q
 \ar=\ar a_1u^{p-\lambda r}v^{q-\lambda s}(u^rv^s)^\lambda
\le a_1u^{p-\lambda r}(1+v+u^rv^s)^{q-\lambda s+\lambda}
\cr
 \ar\le\ar a_1u^{p-\lambda r}(1+v+u^rv^s)^{1-p+\lambda r} \le  \frac{b_{10}}2u+K(1+v+u^rv^s)
\eeqnn 
for some \(K>0\). Taking
\(c_1\ge\max\{2K/b_{20},K/|a_2|\}\) in
\eqref{eq:weighted-generator-expansion}, \eqref{eq1005a} yields
\beqlb\label{eq1005b} 
 \cL V_0(x,y)\le\iota_1+c_1\iota_2+K
 -\frac{b_{10}}2u-\frac{c_1b_{20}}2v.
\eeqlb
When \(a_2=0\), we have
\(p+q\le1\) and \(p<1\) by \eqref{eq:mixed-parameter-condition}. The same argument gives
\(a_1u^pv^q\le(b_{10}/2)u+K(1+v)\), so
\(c_1\ge2K/b_{20}\) yields { the same bound form \eqref{eq1005b}.}
Suppose now that \(a_1,a_2>0\). By
\Cref{ass:cooperative-dissipation}, \(p+q\le1\) and \(r+s\le1\).
For all \(h,k>0\) with \(h+k<1\) and every \(\varepsilon>0\),
\beqlb\label{eq1005c} 
 u^hv^k\le(u+v)^{h+k}\le\varepsilon(u+v)+C_\varepsilon.
\eeqlb
If \(p+q<1\) and \(r+s<1\), take \(c_1=1\).
Applying \eqref{eq1005c} to the last two terms in
\eqref{eq1005a} gives
\[
 \cL V_0(x,y)\le\iota_1+c_1\iota_2+C_0
 -\frac{b_{10}}2u-\frac{c_1b_{20}}2v
\]
for some constant \(C_0\ge0\).
If \(p+q=1>r+s\), by Young's inequality, it gives
	\beqlb\label{eq1006a} 
a_1u^pv^q\le(b_{10}/4)u+Kv
\eeqlb 
for some \(K>0\).
Choose \(c_1\ge4K/b_{20}\). Since \(r+s<1\), then by \eqref{eq1005c},
\beqlb\label{eq1006}
 c_1a_2u^rv^s\le\frac{b_{10}}4u+\frac{c_1b_{20}}4v+C_0
\eeqlb
 for some \(C_0\ge0\).
By \eqref{eq1006a}, \eqref{eq1006} and \eqref{eq1005a}, it proves \eqref{eq:weighted-dissipation}.
The case \(r+s=1>p+q\) follows similarly by using
\(a_2u^rv^s\le Ku+(b_{20}/4)v\) and choosing
\(0<c_1\le b_{10}/(4K)\).
Finally, suppose that \(p+q=r+s=1\), so \(p,q,r,s\in(0,1)\).
Set
\[
 t_0=(a_1/b_{10})^{1/q},\qquad
 c_1=\frac{b_{10}q}{a_2r}t_0^{1-r},\qquad
 F(t)=b_{10}t+c_1b_{20}-a_1t^p-c_1a_2t^r.
\]
Then by \eqref{eq:cooperative-critical} and \eqref{eq:imm-regularity}, it yields $F''(t)>0, F'(t_0)=0$ and $F(t_0)=c_1(b_{20}-a_2t_0^r)>0.$
Hence \(F(t)\ge F(t_0)>0\) for all \(t\ge t_0\). Since
\(F(t)/(1+t)\to b_{10}>0\) as \(t\to\infty\), $c:=\inf_{t\ge0}\frac{F(t)}{1+t}>0.$
By \eqref{eq1005a},
\[
 \cL V_0(x,y)=\iota_1+c_1\iota_2-vF(u/v)
 \le\iota_1+c_1\iota_2-c(u+v).
\]
{ By continuity, the bound holds on \(E\), so
\eqref{eq:weighted-dissipation} follows with \(C_0=0\).}
\end{proof}

{The next two lemmas concern the nonnegative
solution of \eqref{eq:model}. At zero immigration, this is
the CWP. 
\begin{lemma}
	\label{lem:imm-lyapunov}
	Assume that \eqref{eq:imm-regularity} and
	\Cref{ass:weighted-dissipation} hold.
	Let \(c_1,c>0\) and \(C_0\ge0\) satisfy
	\eqref{eq:weighted-dissipation}.
	Then the nonnegative solution \(Z\) of \eqref{eq:model}
	is nonexplosive, and there exists a constant \(C>0\) such that,
	for every \((x_0,y_0)\in E\),
	\begin{equation}
		\E_{x_0,y_0}V_0(Z_t)
		\le e^{-t}V_0(x_0,y_0)+C(1-e^{-t}),
		\qquad t\ge0,
		\label{eq:V-first-moment}
	\end{equation}
	and
	\begin{equation}
		\E_{x_0,y_0}\int_0^T
		\bigl(\iota_1+c_1\iota_2+C_0-\cL V_0(Z_s)\bigr)\dd s
		\le  V_0(x_0,y_0)+(\iota_1+c_1\iota_2+C_0)T,
		\qquad T>0,
		\label{eq:imm-occupation-bound}
	\end{equation}
		where \(V_0\) is the function defined in \eqref{SVW}. Moreover, for every compact \(K\subset E\) and \(t>0\),
	\[
	\lim_{R\to\infty}\sup_{(x_0,y_0)\in K}
	\Pp_{x_0,y_0}(\tau_R^+\le t)=0.
	\]
\end{lemma}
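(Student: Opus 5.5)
The plan is to apply the localized Dynkin inequality to $V_0$ and to the time-weighted function $e^{t}V_0$, with localization at $\tau_N^+$. Because $V_0$ is affine, it has a $C^2$ extension to a neighbourhood of $E$. Moreover, the compensated jump integrand $f(x+z,y)-f(x,y)-zf_x'$ vanishes identically, so $\cL V_0$ is exactly \eqref{eq:weighted-generator-expansion} and is bounded on every $[0,N]^2$. Under \eqref{eq:imm-regularity}, the preliminary remark after \eqref{eq:localized-dynkin-martingale} then shows that
\[
V_0(Z_{t\wedge\tau_N^+})-V_0(x_0,y_0)-\int_0^{t\wedge\tau_N^+}\cL V_0(Z_s)\dd s
\]
is a martingale: it is a local martingale with bounded drift, and its jump part is a true martingale before $\tau_N^+$ since $\int z\,\mu_i(\dd z)$ is finite away from the small jumps. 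Set $A:=\iota_1+c_1\iota_2+C_0$. From \eqref{eq:weighted-dissipation} and $W_0\ge0$ we get $\cL V_0\le A$.

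\textbf{Step 1 (moment bound \eqref{eq:V-first-moment}).} I first need $\cL V_0\le C'-V_0$. This holds because $W_0=x^{r_{10}}+y^{r_{20}}$ dominates $x+c_1y$ up to a constant, using $r_{i0}>1$. Concretely, $cW_0\ge V_0-C''$, so $\cL V_0\le A+C''-V_0=:C-V_0$. Itô's formula applied to $e^{s}V_0(Z_s)$ up to $t\wedge\tau_N^+$ gives
\[
\E\bigl[e^{t\wedge\tau_N^+}V_0(Z_{t\wedge\tau_N^+})\bigr]\le V_0(x_0,y_0)+C\bigl(e^{t}-1\bigr).
\]
Since $V_0\ge1$, this yields $\Pp(\tau_N^+\le t)\,\min_{\{x\vee y\ge N\}}V_0\le e^{t}\bigl(V_0(x_0,y_0)+C\bigr)$. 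The minimum is at least $1+(1\wedge c_1)N$, which tends to $\infty$. The bound is uniform over compact $K$, which gives the final claim. Letting $N\to\infty$ gives $\tau_\infty=\infty$ a.s., hence nonexplosion. Fatou's lemma then gives \eqref{eq:V-first-moment} after multiplying by $e^{-t}$.

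\textbf{Step 2 (occupation bound \eqref{eq:imm-occupation-bound}).} The integrand $A-\cL V_0\ge cW_0\ge0$ is nonnegative. The martingale identity together with $V_0\ge0$ gives
\[
\E\int_0^{T\wedge\tau_N^+}\bigl(A-\cL V_0(Z_s)\bigr)\dd s
= V_0(x_0,y_0)-\E V_0(Z_{T\wedge\tau_N^+})+A\,\E[T\wedge\tau_N^+]
\le V_0(x_0,y_0)+AT.
\]
By nonexplosion, $\tau_N^+\uparrow\infty$. Monotone convergence then yields \eqref{eq:imm-occupation-bound}.

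\textbf{Main obstacle.} The delicate point is justifying the martingale property for the stopped process from boundary initial states and at zero immigration, where $Z$ is the CWP. Under \eqref{eq:imm-regularity} the coefficients are continuous on $E$, so the preliminary remark extending $\cL$ to $E$ covers this case, and no localization at $\tau_\varepsilon^-$ is required. On the axes the CWP generator coincides with the restriction of $\cL$, so the same computation applies. I would check explicitly that the stochastic integrals are $L^1$-martingales up to $\tau_N^+$. For the Brownian part this follows from boundedness of the coefficients. For the jump part, the compensated integral of $z$ has finite first moment because $\alpha_i>1$. If any doubt remains, I would use the Fatou-type inequality \eqref{eq:localized-dynkin-inequality}, which suffices for the inequality direction used throughout.
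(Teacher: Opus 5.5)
Your proposal is correct and follows essentially the same route as the paper: you derive the Foster bound $\cL V_0+V_0\le C$ from \eqref{eq:weighted-dissipation} and $r_{i0}>1$, apply It\^o's formula to $e^{s}V_0(Z_s)$ up to a localizing time, and obtain an exit estimate that gives nonexplosion and uniformity over compacts; you then use Fatou for \eqref{eq:V-first-moment} and the stopped identity for \eqref{eq:imm-occupation-bound}. The only cosmetic difference is that the paper localizes at the level-hitting time $\tau_N^{V_0}$ of $V_0$ rather than at $\tau_N^+$, which is equivalent because of the two-sided comparison between $V_0$ and $x\vee y$.
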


\begin{proof}
For \(N>0\), set $\tau_N^{V_0}:=\inf\{t\ge0:\ V_0(Z_t)\ge N\}.$
Since \(r_{10},r_{20}>1\), by \eqref{eq:weighted-dissipation}, there exists a constant \(C>0\) such that
\begin{equation}
 \cL V_0+V_0\le C
 \qquad\text{on }E.
 \label{eq:linear-Foster}
\end{equation}
Fix \((x_0,y_0)\in E\). By \eqref{eq:model}, for $0\le t<\tau_\infty,$
\beqlb\label{Vito}
 V_0(Z_t)= V_0(x_0,y_0)+\int_0^t\cL V_0(Z_s)\dd s+M_t, 
\eeqlb
where \(M=(M_t)_{t\ge0}\) is the local martingale given by
\beqlb\label{M} 
 M_t\ar=\ar \int_0^t\sqrt{2b_{11}X_s^{r_{11}}}\dd B_1(s)
 +c_1\int_0^t\sqrt{2b_{21}Y_s^{r_{21}}}\dd B_2(s)  +\int_0^t\int_0^\infty\int_0^{b_{12}X_{s-}^{r_{12}}}
 z\,\widetilde N_1(\dd s,\dd z,\dd u)\cr
 \ar\ar +c_1\int_0^t\int_0^\infty\int_0^{b_{22}Y_{s-}^{r_{22}}}
 z\,\widetilde N_2(\dd s,\dd z,\dd u).
\eeqlb
For \(N> V_0(x_0,y_0)\), set $M_t^{(N)}:=\int_0^{t\wedge\tau_N^{V_0}}\e^s\dd M_s.$
It is easy to see that \(\E_{x_0,y_0}M_t^{(N)}=0\).
By integration by parts, we have
\beqnn
 \e^{t\wedge\tau_N^{V_0}}V_0(Z_{t\wedge\tau_N^{V_0}})
 = V_0({x_0,y_0})+\int_0^{t\wedge\tau_N^{V_0}}\e^s
 \bigl(V_0+\cL  V_0\bigr)(Z_s)\dd s +M_t^{(N)}.
\eeqnn 
Using \eqref{eq:linear-Foster} and taking expectations, we obtain
\begin{equation}
 \E_{{x_0,y_0}}\left[
 \e^{t\wedge\tau_N^{V_0}}V_0(Z_{t\wedge\tau_N^{V_0}})
 \right]
 \le V_0({x_0,y_0})+C(\e^t-1).
 \label{eq:exp-V-stopped}
\end{equation}
On \(\{\tau_N^{V_0}\le t\}\), we have $V_0(Z_{t\wedge\tau_N^{V_0}})\ge N.$
It follows that
\begin{equation}
 \Pp_{{x_0,y_0}}(\tau_N^{V_0}\le t)
 \le\frac{V_0({x_0,y_0})+C(\e^t-1)}{N}.
 \label{eq:nonexplosion-prob}
\end{equation}
Since
\beqlb\label{star} 
 1+(1\wedge c_1)(x\vee y)
 \le V_0(x,y)
 \le 1+(1+c_1)(x\vee y),
\eeqlb
we have \(\tau_N^{V_0}\uparrow\tau_\infty\).
Letting \(N\to\infty\) in \eqref{eq:nonexplosion-prob}
gives \(\Pp_{x_0,y_0}(\tau_\infty\le t)=0\) for every
\(t>0\). Thus the solution to \eqref{eq:model} is nonexplosive.
By \eqref{star}, we obtain $
 \{\tau_R^+\le t\}
 \subset\{\tau_{1+(1\wedge c_1)R}^{V_0}\le t\}.$
By \eqref{eq:nonexplosion-prob}, for every compact \(K\subset E\)
and all sufficiently large \(R\), we have
\[
 \sup_{(x_0,y_0)\in K}\Pp_{x_0,y_0}(\tau_R^+\le t)
 \le
 \frac{\sup_{(x_0,y_0)\in K}V_0(x_0,y_0)
 +C(\e^t-1)}{1+(1\wedge c_1)R}
 \rightarrow0\qquad\text{as }R\to\infty.
\]
Letting \(N\to\infty\) in \eqref{eq:exp-V-stopped}, using Fatou's
lemma, and dividing by \({\e^t}\), it yields
\[
 \E_{{x_0,y_0}}V_0(Z_t)
 \le\e^{-t}V_0({x_0,y_0})
 +C(1-\e^{-t}),
 \qquad t\ge0.
\]
This proves \eqref{eq:V-first-moment}.
Finally, by stopping \eqref{Vito} at \(T\wedge\tau_N^{V_0}\)
and taking expectations, we have
\begin{align*}
 &\E_{{x_0,y_0}}V_0(Z_{T\wedge\tau_N^{V_0}})
 +\E_{{x_0,y_0}}\int_0^{T\wedge\tau_N^{V_0}}
 \bigl(\iota_1+c_1\iota_2+C_0-\cL V_0(Z_s)\bigr)\dd s\\
 &\hspace{25mm}=V_0({x_0,y_0})+
 \left(\iota_1+c_1\iota_2+C_0\right)
 \E_{x_0,y_0}(T\wedge\tau_N^{V_0}).
\end{align*}
By \eqref{eq:weighted-dissipation},
\(\iota_1+c_1\iota_2+C_0-\cL V_0\ge0\).
Letting \(N\to\infty\) and applying Fatou's lemma, we obtain
\eqref{eq:imm-occupation-bound}.
The result follows.
\end{proof}
}

\begin{lemma}
	\label{lem:common-stationary-moments}
	Assume that \eqref{eq:imm-regularity} and
	\Cref{ass:weighted-dissipation} hold.
	Let \(c_1,c>0\) and \(C_0\ge0\)
	satisfy \eqref{eq:weighted-dissipation}.
	If \(\pi\) is an invariant probability for the
	nonnegative solution $Z$ of \eqref{eq:model}, then
	\[
	\left\{
	\begin{aligned}
		&\int_E\bigl(\iota_1+c_1\iota_2+C_0-\cL V_0(x, y)\bigr)\pi(\dd x, \dd y)
		\le \iota_1+c_1\iota_2+C_0,\\
		&\int_E W_0(x,y) \pi(\dd x, \dd y)
		\le \frac{\iota_1+c_1\iota_2+C_0}{c}<\infty.
	\end{aligned}
	\right.
	\]
\end{lemma}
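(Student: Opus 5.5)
The plan is to integrate the occupation bound \eqref{eq:imm-occupation-bound} of \Cref{lem:imm-lyapunov} against \(\pi\) and use invariance to remove the initial-state term. Write \(G:=\iota_1+c_1\iota_2+C_0-\cL V_0\). By \eqref{eq:weighted-dissipation}, \(G\ge cW_0\ge0\) on \(E\). The difficulty is that \(V_0\) need not be \(\pi\)-integrable a priori, so we cannot simply integrate \eqref{eq:imm-occupation-bound} against \(\pi\). We therefore first localize in the initial state.

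Fix \(K\subset E\) compact with \(\pi(K)>0\). Applying \eqref{eq:imm-occupation-bound} for initial states in \(K\) and using Fubini and the Markov property gives
\[
\frac1T\int_0^T\int_E\E_{x,y}G(Z_s)\,\pi_K(\dd x,\dd y)\dd s\le \frac{\sup_KV_0}{T}+\iota_1+c_1\iota_2+C_0,
\qquad \pi_K:=\pi(\cdot\cap K).
\]
By invariance, \(\pi_KP_s\le\pi P_s=\pi\) as measures. Since the process is nonexplosive by \Cref{lem:imm-lyapunov}, no mass is killed. However, the inequality above points the wrong way: we need a lower bound on the left-hand side in terms of \(\int G\,\dd\pi\).

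To get it, we use \(\pi P_s=\pi\) directly, but with \(G\wedge M\) in place of \(G\) and a cutoff of initial states. For \(M>0\) and \(K_n\uparrow E\) compact,
\[
\int_E (G\wedge M)\dd\pi=\frac1T\int_0^T\!\!\int_E \E_{x,y}[(G\wedge M)(Z_s)]\,\pi(\dd x,\dd y)\dd s .
\]
Split the initial integral over \(K_n\) and \(K_n^c\). The \(K_n\) part is bounded by \(\sup_{K_n}V_0/T+\iota_1+c_1\iota_2+C_0\) via \eqref{eq:imm-occupation-bound}, and the \(K_n^c\) part is bounded by \(M\pi(K_n^c)\). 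Letting \(T\to\infty\) first, then \(n\to\infty\), and finally \(M\to\infty\) by monotone convergence yields
\[
\int_E G\,\dd\pi\le \iota_1+c_1\iota_2+C_0,
\]
which is the first inequality. The second follows from \(cW_0\le G\), giving \(\int W_0\,\dd\pi\le(\iota_1+c_1\iota_2+C_0)/c\).

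The main point to check is that \(G\) (hence \(G\wedge M\)) is a nonnegative Borel function on all of \(E\), so that Fubini and the invariance identity apply for bounded Borel test functions, including at boundary points. This holds by the continuity of \(\cL V_0\) on \(E\) under \eqref{eq:imm-regularity}, noting that \(\cL V_0\) is the polynomial-type expression \eqref{eq:weighted-generator-expansion}. We also need \eqref{eq:imm-occupation-bound} to hold for every initial state in \(E\), which \Cref{lem:imm-lyapunov} provides.
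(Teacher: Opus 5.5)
Your proof is correct, but it takes a different route from the paper's.

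\textbf{The paper's route.} It works at the level of the generator. It tests stationarity against the bounded concave truncations \(f_n=n(1-e^{-V_0/n})\). Concavity, including in the jump term, gives \(\cL f_n\le e^{-V_0/n}\cL V_0\). The stationary It\^o/Dynkin identity then gives \(0=\int\cL f_n\,\dd\pi\), and Fatou's lemma handles \(n\to\infty\).

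\textbf{Your route.} You reuse the pathwise occupation bound \eqref{eq:imm-occupation-bound} from \Cref{lem:imm-lyapunov}, which holds from every initial state in \(E\). You combine it with invariance tested against the bounded continuous functions \(G\wedge M\), averaged over \([0,T]\). Your order of limits (\(T\to\infty\), then \(n\to\infty\), then \(M\to\infty\)) is right. Tonelli applies because \(G\) is continuous and nonnegative on \(E\) and \(Z\) is c\`adl\`ag.

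\textbf{What each buys.} Your argument avoids checking that \(\cL f_n\) is bounded, the concavity inequality for the nonlocal part, and the martingale property of \(f_n(Z_t)\) under \(\Pp_\pi\). The price is that it leans on \Cref{lem:imm-lyapunov}, whose stopping and Fatou arguments do the analogous work. The paper's argument is self-contained at the generator level. It also yields the slightly finer intermediate inequality \(\int e^{-V_0/n}G\,\dd\pi\le(\iota_1+c_1\iota_2+C_0)\int e^{-V_0/n}\,\dd\pi\).

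\textbf{Minor remarks.}
\begin{enumerate}
\item Your first paragraph with \(\pi_K\) is a false start and can be deleted.
\item The compact cutoff \(K_n\) is not needed. Indeed, \(\frac1T\int_0^T P_s(G\wedge M)(x,y)\,\dd s\le\min\{M,\,V_0(x,y)/T+\iota_1+c_1\iota_2+C_0\}\) pointwise on \(E\). The reverse Fatou lemma, with domination by \(M\), then handles \(T\to\infty\) directly after integrating against \(\pi\).
\end{enumerate}
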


\begin{proof}
 To avoid assuming any moment
	condition on \(\pi\), we first use the bounded functions
	\[
	f_n(x,y):=n\bigl(1-e^{- V_0(x,y)/n}\bigr),
	\qquad n\ge1.
	\]
The function \(v\mapsto n(1-e^{-v/n})\) is concave.
By \eqref{eq:generator}, it is easy to check that
	\[
	\cL f_n(x, y)\le e^{-V_0(x, y)/n}\cL V_0(x, y).
	\]
For each fixed \(n\), both \(\cL f_n\) and
\(e^{- V_0/n}\cL V_0\) are bounded on \(E\).
	By \Cref{lem:imm-lyapunov}, the solution $Z$ is nonexplosive.
	Since \(f_n\) and \(\cL f_n\) are bounded, localized
	It\^o's formula and bounded convergence yield, for \(t>0\),
	\[
	\E_\pi f_n(Z_t)-\E_\pi f_n(Z_0)
	=\int_0^t\E_\pi\cL f_n(Z_s)\dd s.
	\]
Hence, by stationarity, we obtain
	\[
	0=\int_E\cL f_n\,\dd\pi
	\le\int_E e^{-V_0/n} \cL V_0(x, y) \,\dd\pi.
	\]
Since \(0<e^{- V_0/n}\le1\), it follows that
\beqlb\label{eq:stationary-truncated-drift-bound}
\ar\ar \int_E  {e^{-V_0(x,y)/n}}
 \bigl(\iota_1+c_1\iota_2+C_0- {\cL V_0(x,y)}\bigr)\pi(\dd x,\dd y)\cr
 \ar\ar\qquad\le(\iota_1+c_1\iota_2+C_0)
 \int_E e^{- V_0(x,y)/n}\pi(\dd x,\dd y)
 \le\iota_1+c_1\iota_2+C_0.
\eeqlb
	By \eqref{eq:weighted-dissipation}, we have 
	\beqlb\label{eq2} 
	\iota_1+c_1\iota_2+C_0-\cL  V_0(x, y)
	\ge c W_0(x,y) \ge0.
	\eeqlb
 Letting \(n\to\infty\) in \eqref{eq:stationary-truncated-drift-bound}
and applying Fatou's lemma, we obtain
	\begin{equation}
	\int_E
	\bigl(\iota_1+c_1\iota_2+C_0-\cL V_0(x, y)\bigr)\pi(\dd x, \dd y)
	\le \iota_1+c_1\iota_2+C_0.
	\label{eq:stationary-integrated-drift-bound}
\end{equation}
	Combining \eqref{eq2} and \eqref{eq:stationary-integrated-drift-bound}, we obtain
	\[
	\int_E W_0(x,y) \pi(\dd x, \dd y)
	\le\frac{\iota_1+c_1\iota_2+C_0}{c}<\infty.
	\]
	The result follows.
\end{proof}

{
We will also use the following fact about time-averaged
transition laws.

\begin{lemma}
\label{lem:time-average-limits}
Let \(D=E\) or \(E^\circ\), and let \((Q_t)_{t\ge0}\)
be the transition semigroup of a conservative weak Feller
Markov process with c\`adl\`ag paths in \(D\).
For \(z\in D\), define
\[
\nu_T^z:=\frac1T\int_0^T Q_s(z,\cdot)\dd s.
\]
Then every weak subsequential limit of \(\nu_T^z\) as
\(T\to\infty\) is invariant for \((Q_t)_{t\ge0}\).
If \(\{\nu_T^z:T\ge1\}\) is tight on \(D\), an invariant
probability exists. If, in addition, the invariant probability
is unique, say \(\pi\), then
\(\nu_T^z\Rightarrow\pi\) as \(T\to\infty\).
\end{lemma}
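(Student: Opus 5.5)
The proof is a Krylov--Bogoliubov argument, followed by a routine uniqueness step. Work throughout on the state space \(D\), which is Polish in either case (\(E\) closed, \(E^\circ\) open in \(\R^2\)).

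\textbf{Subsequential limits are invariant.} Suppose \(\nu_{T_n}^z\Rightarrow\nu\) along some \(T_n\to\infty\), with \(\nu\) a probability on \(D\). Fix \(t>0\) and \(f\in C_b(D)\). By Fubini and the semigroup property,
\[
\nu_T^zQ_tf-\nu_T^zf=\frac1T\int_0^T\bigl(Q_{s+t}f(z)-Q_sf(z)\bigr)\dd s
=\frac1T\Bigl(\int_T^{T+t}Q_sf(z)\dd s-\int_0^tQ_sf(z)\dd s\Bigr).
\]
Its absolute value is at most \(2t\|f\|_\infty/T\). Measurability of \(s\mapsto Q_sf(z)\) for the Fubini step follows from the c\`adl\`ag paths and bounded convergence. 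The weak Feller property gives \(Q_tf\in C_b(D)\), so passing to the limit along \(T_n\) yields \(\nu Q_tf=\nu f\) for every \(f\in C_b(D)\).

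To extend this to \(f\in B_b(D)\): the finite measures \(\nu Q_t(\cdot)\) and \(\nu\) agree on \(C_b(D)\), which determines finite Borel measures on a metric space, so \(\nu Q_t=\nu\) as measures. Hence \(\nu\) is invariant. Conservativeness is used here: it makes \(Q_t(z,\cdot)\) a probability kernel, so \(\nu_T^z\) and \(\nu Q_t\) are probabilities and no mass leaks to a cemetery.

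\textbf{Existence under tightness.} If \(\{\nu_T^z:T\ge1\}\) is tight on \(D\), Prokhorov's theorem on the Polish space \(D\) gives some sequence \(T_n\to\infty\) along which \(\nu_{T_n}^z\) converges weakly to a probability on \(D\). By the first step this limit is invariant.

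\textbf{Convergence under uniqueness.} Assume further that \(\pi\) is the unique invariant probability. If \(\nu_T^z\not\Rightarrow\pi\), there exist \(f\in C_b(D)\), \(\varepsilon>0\) and \(T_n\to\infty\) with \(|\nu_{T_n}^zf-\pi f|\ge\varepsilon\). Tightness lets us extract a further weakly convergent subsequence. Its limit is invariant by the first step, hence equal to \(\pi\), which contradicts the choice of \(f\).

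The only point needing care is the limit passage in the first step. It requires \(Q_tf\) to be continuous on \(D\), not just bounded, which is exactly why weak Feller is assumed. It also requires tightness to be taken relative to \(D\) itself: when \(D=E^\circ\), a limit that puts mass on \(\partial E\) would not be a probability on \(D\), and the argument would fail.
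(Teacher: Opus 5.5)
Your proposal is correct and matches the paper's proof step for step: the same Krylov--Bogoliubov estimate $|\nu_T^zQ_tf-\nu_T^zf|\le 2t\|f\|_\infty/T$, the weak Feller property to pass to the limit, Prokhorov's theorem for existence, and the subsequence argument for convergence under uniqueness. Your extra remarks fill in details the paper leaves implicit. These are the measurability of $s\mapsto Q_sf(z)$ and the extension of invariance from $C_b(D)$ to $B_b(D)$, which holds because $C_b$ integrals determine finite Borel measures.
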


\begin{proof}
We use the Krylov--Bogoliubov argument; see
\cite[Theorem~3.1.1, p.~21]{DaPratoZabczyk1996}.
For \(t\ge0\) and \(f\in C_b(D)\), the semigroup property gives
\beqnn
 \left|\int_D Q_tf\,\dd\nu_T^z-\int_D f\,\dd\nu_T^z\right|
 =\frac1T\left|
 \int_T^{T+t}Q_sf(z)\dd s-\int_0^tQ_sf(z)\dd s
 \right| \le\frac{2t\|f\|_\infty}{T}.
 \eeqnn
If \(\nu_{T_k}^z\Rightarrow\pi\) for some \(T_k\to\infty\),
the weak Feller property allows passage to the limit,
yielding
\[
 \int_D Q_tf\,\dd\pi=\int_D f\,\dd\pi.
\]
Thus \(\pi\) is invariant. Tightness and Prokhorov's theorem
give such a subsequential limit. If the invariant probability
is unique, every weak subsequential limit equals \(\pi\). Then by tightness, we have  \(\nu_T^z\Rightarrow\pi\). The result follows.
\end{proof}
}

\section{Models without immigration}
\label{sec:noimm-proofs}
Throughout this section, \(\iota_1=\iota_2=0\).
We begin with the exclusion of interior invariant
probabilities when \(a_2<0\).

\begin{lemma}
\label{lem:interior-exclusion}
{Assume that \(a_2<0\).} Let \((Q_t)_{t\ge0}\) denote either \((P_t^{\tau_0})_{t\ge0}\), provided
the SSP \(Z^{\tau_0}\) is conservative, or
\((P_t)_{t\ge0}\), provided \eqref{eq:boundary-compatible} holds
and the CWP \(Z\) is conservative.  Set \(f(x,y):=\e^{-y}\).
Then
\[
 Q_tf\ge f\quad\text{on }E,\quad t\ge0,
 \qquad
 Q_tf>f\quad\text{on }E^\circ,\quad t>0.
\]
{{Moreover,} every invariant probability \(\pi\)
for \((Q_t)_{t\ge0}\) satisfies \(\pi(\partial E)=1\).}
\end{lemma}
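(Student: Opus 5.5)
The plan is to compute $\cL f$ for $f(x,y)=\e^{-y}$ explicitly, show it is nonnegative everywhere and strictly positive on $E^\circ$, and then turn this into the semigroup inequalities via the localized Dynkin formula. Since $\iota_2=0$, \eqref{eq:generator} gives on $E^\circ$
\[
 \cL f(x,y)=\e^{-y}\Bigl(-a_2y^{\theta_2}x^{\kappa_2}+b_{20}y^{r_{20}}+b_{21}y^{r_{21}}+b_{22}y^{r_{22}}\int_0^\infty(\e^{-z}-1+z)\,\mu_2(\dd z)\Bigr).
\]
The jump integral is finite because $\e^{-z}-1+z\le z^2\wedge z$ and $\alpha_2\in(1,2)$. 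Every term is nonnegative, and the first term is strictly positive on $E^\circ$ because $a_2<0$. On the boundary I would argue by convention.
\begin{itemize}
\item For the SSP the process is frozen on $\partial E$, so there is nothing to check.
\item For the CWP on the $x$-axis, $f\equiv1$ along that axis, so the axis operator gives $0$.
\item For the CWP on the $y$-axis, the interaction term vanishes because $\kappa_2>0$; the remaining one-dimensional terms are again nonnegative.
\item At the origin, $\cL f=0$.
\end{itemize}
Thus $\cL f\ge0$ on $E$.

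Next, fix $z_0=(x_0,y_0)\in E^\circ$ and apply \eqref{eq:localized-dynkin-martingale}. This applies because $f$ extends to a $C_b^2$ function, so $\cL f$ is bounded on $[\varepsilon,N]^2$. Writing $\sigma=\tau_\varepsilon^-\wedge\tau_N^+$, it gives
\[
 \E_{z_0}f(Z_{t\wedge\sigma})=f(z_0)+\E_{z_0}\int_0^{t\wedge\sigma}\cL f(Z_s)\dd s\ \ge\ f(z_0).
\]
Letting $N\to\infty$ and then $\varepsilon\downarrow0$, conservativeness and c\`adl\`ag paths give $Z_{t\wedge\sigma}\to Z_{t\wedge\tau_0}$, and bounded convergence (using $0\le f\le1$) yields $\E_{z_0}f(Z_{t\wedge\tau_0})\ge f(z_0)$. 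For the SSP this already gives $Q_tf\ge f$ on $E^\circ$, and equality is trivial on $\partial E$.

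For the CWP, I would additionally use the strong Markov property at $\tau_0$ (\Cref{prop:axis-realization}) to write $Q_tf(z_0)=\E_{z_0}[P_{t-\tau_0}f(Z_{\tau_0});\tau_0\le t]+\E_{z_0}[f(Z_t);\tau_0>t]$. It then suffices to show $P_sf\ge f$ from boundary starting points. On the $x$-axis this is trivial. On the $y$-axis I would repeat the same one-dimensional localized Dynkin argument with nonnegative drift of $\e^{-y}$, obtaining $P_sf(0,y)\ge\e^{-y}$. Combining the two pieces gives $Q_tf\ge f$ on all of $E$.

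For the strict inequality at $z_0\in E^\circ$ and $t>0$, the monotone limit of $\E\int_0^{t\wedge\sigma}\cL f(Z_s)\dd s$ dominates $\E\int_0^{t\wedge\rho}\cL f(Z_s)\dd s$, where $\rho$ is the exit time from a small ball $B(z_0,r)\Subset E^\circ$. On that ball, $\cL f\ge c_r:=\inf_{B(z_0,r)}(-a_2)y^{\theta_2}x^{\kappa_2}\e^{-y}>0$. Right-continuity gives $\rho>0$ a.s., so the integral is at least $c_r\E(t\wedge\rho)>0$. The axis phase only adds a nonnegative contribution, so $Q_tf(z_0)>f(z_0)$.

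Finally, for an invariant $\pi$ we have $\int(Q_tf-f)\,\dd\pi=0$. The integrand is nonnegative on $E$ and strictly positive on $E^\circ$, which forces $\pi(E^\circ)=0$, that is, $\pi(\partial E)=1$. I expect the main obstacle to be the careful passage from the localized identity to the unlocalized inequality for the CWP. This requires handling the gluing at $\tau_0$ and the one-dimensional axis dynamics, together with checking that the boundary limit $Z_{\tau_0}$ from \Cref{def:coordinatewise} is compatible with the limits $\varepsilon\downarrow0$ and $N\to\infty$.
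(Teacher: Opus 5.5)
Your proposal is correct and follows essentially the paper's route. You compute $\cL f$ for $f=\e^{-y}$, apply the localized Dynkin formula and pass to the limit, glue in the axis phase of the CWP via the strong Markov property, get strictness from a positive lower bound on $\cL f$ near the starting point, and integrate $Q_tf-f$ against $\pi$. The only difference is packaging: the paper first shows that $f(Z_t)$ is a bounded submartingale and uses optional sampling, whereas you use the strong Markov decomposition at $\tau_0$ together with monotone convergence of the drift integral, which yields the same inequalities.
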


\begin{proof}
Observe that
\[
 f_x'(x,y)=f_{xx}''(x,y)=0,\qquad
 f_y'(x,y)=-\e^{-y},\qquad
 f_{yy}''(x,y)=\e^{-y},
\]
and
\(f(x,y+z)-f(x,y)-zf_y'(x,y)
=\e^{-y}(\e^{-z}-1+z)\).
Since
\[
 {\int_0^\infty(\e^{-z}-1+z)\,\mu_2(\dd z)
 =\frac1{\Gamma(\alpha_2)}},
\]
then it follows from \eqref{eq:generator} that, for \((x,y)\in E^\circ\),
\begin{equation}
 \begin{aligned}
 \cL f(x,y)
  =\e^{-y}\left[
 -a_2y^{\theta_2}x^{\kappa_2}
 +b_{20}y^{r_{20}}+b_{21}y^{r_{21}}
 +\frac{b_{22}}{\Gamma(\alpha_2)}y^{r_{22}}\right] \ge -a_2\e^{-y}y^{\theta_2}x^{\kappa_2}>0.
 \end{aligned}
 \label{eq:strict-positive-generator}
\end{equation}
We first consider the SSP $Z^{\tau_0}$ started from
\((x_0,y_0)\in E^\circ\).
For every sufficiently large integer \(k\),
\eqref{eq:localized-dynkin-martingale} and \(\cL f\ge0\) imply that $(f(Z_{t\wedge\tau_{1/k}^-\wedge\tau_k^+}))_{t\ge0}$
is a bounded submartingale.
By conservativeness and \Cref{def:absorbed}, for every fixed \(t\ge0\),
\[
 \lim_{k\to\infty}
 f(Z_{t\wedge\tau_{1/k}^-\wedge\tau_k^+})
 =f(Z_t^{\tau_0})
 \qquad\text{almost surely}.
\]
Since \(0<f\le1\), by dominated convergence in the conditional submartingale inequality, one sees that 
\((f(Z_t^{\tau_0}))_{t\ge0}\) is a bounded submartingale.
For the CWP, \eqref{eq:boundary-compatible}
ensures that all coefficients attached to an extinct
coordinate vanish at zero, so that keeping this coordinate
at zero is consistent with \eqref{eq:model}.
The preceding argument applies up to \(\tau_0\).
On the \(x\)-axis, \(f\equiv1\), while on the open
\(y\)-axis, since \(\kappa_2>0\),
\begin{equation}
\cL f(0,y)=e^{-y}\left[
b_{20}y^{r_{20}}+b_{21}y^{r_{21}}
+\frac{b_{22}}{\Gamma(\alpha_2)}y^{r_{22}}
\right]\ge0.
\label{eq:axis-laplace-generator}
\end{equation}
Localization along the axes and the strong Markov property
from \Cref{prop:axis-realization} therefore show that
\((f(Z_t))_{t\ge0}\) is a bounded
submartingale.
Thus, under either convention,
\begin{equation}
 Q_tf(x_0,y_0)\ge f(x_0,y_0),
 \qquad (x_0,y_0)\in E,\quad t\ge0.
 \label{eq:laplace-subinvariant}
\end{equation}
Fix \((x_0,y_0)\in E^\circ\), \(t>0\),
\(0<\varepsilon<x_0\wedge y_0\), and \(N>x_0\vee y_0\).
{Right continuity gives}
\(\Pp_{x_0,y_0}(\tau_\varepsilon^-\wedge\tau_N^+>0)=1\).
Moreover, by \eqref{eq:strict-positive-generator}, we have $c_0:=\min_{(x,y)\in[\varepsilon,N]^2}\cL f(x,y)>0.$
It follows from \eqref{eq:localized-dynkin-martingale} that
\beqnn
 \E_{x_0,y_0}f(Z_{t\wedge\tau_\varepsilon^-\wedge\tau_N^+})
 -f(x_0,y_0) \ar=\ar \E_{x_0,y_0}\int_0^{t\wedge\tau_\varepsilon^-\wedge\tau_N^+}
               \cL f(Z_s)\dd s\cr
 \ar\ge\ar c_0\E_{x_0,y_0}
        (t\wedge\tau_\varepsilon^-\wedge\tau_N^+)>0.
\eeqnn 
By optional sampling for the bounded submartingale, one sees that
\beqlb\label{eq0911} 
 Q_tf(x_0,y_0)
 \ge\E_{x_0,y_0}f(Z_{t\wedge\tau_\varepsilon^-\wedge\tau_N^+})
 >f(x_0,y_0).
\eeqlb
{By \eqref{eq:laplace-subinvariant} and \eqref{eq0911},
\(Q_tf-f\) is nonnegative on \(E\) and strictly positive on
\(E^\circ\) for every \(t>0\).}  If \(\pi\) is invariant, then $\int_E(Q_tf-f)\dd\pi=0,$
which forces \(\pi(E^\circ)=0\). The result follows.
\end{proof}

The preceding lemma gives the following classification.

\begin{proposition}
\label{prop:noimm-classification}
{Assume that \(\iota_1=\iota_2=0\) and {\(a_2<0\)}.}
\begin{enumerate}[label=\textup{(\roman*)}]
\item If the SSP $Z^{\tau_0}$ is conservative, then a
probability \(\pi\) is invariant for \((P_t^{\tau_0})\) if and only if
\(\pi(\partial E)=1\).
\item Assume \eqref{eq:boundary-compatible} and that the
CWP $Z$ is conservative.
{Then every invariant probability \(\pi\) for
\((P_t)\) satisfies \(\pi(\partial E)=1\).}  If, in addition,
\eqref{eq:axis-nontrivial} holds,
then \(\delta_{(0,0)}\) is its unique invariant probability.
\end{enumerate}
\end{proposition}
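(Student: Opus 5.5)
The plan is to derive both parts from \Cref{lem:interior-exclusion}, which already shows that any invariant probability of a conservative SSP or CWP with \(a_2<0\) charges only \(\partial E\).

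\textbf{Part (i).} The "only if" direction is exactly the last assertion of \Cref{lem:interior-exclusion} with \(Q_t=P_t^{\tau_0}\). For the converse, use \Cref{def:absorbed}: every boundary initial state is frozen, so \(P_t^{\tau_0}((x_0,y_0),\cdot)=\delta_{(x_0,y_0)}\) for \((x_0,y_0)\in\partial E\). Hence \(P_t^{\tau_0}f=f\) on \(\partial E\) for every \(f\in B_b(E)\). Integrating against any \(\pi\) with \(\pi(\partial E)=1\) gives invariance. No regularity beyond measurability of the kernel is needed.

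\textbf{Part (ii), first claim.} This is again \Cref{lem:interior-exclusion}, now with \(Q_t=P_t\).

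\textbf{Part (ii), uniqueness under \eqref{eq:axis-nontrivial}.} Given an invariant \(\pi\) with \(\pi(\partial E)=1\), I will show \(\pi(\{(x,0):x>0\})=0\); the \(y\)-axis is symmetric. The axis \(\{(x,0):x\ge0\}\) is invariant for the CWP, and the origin is absorbing. The restriction of \(\pi\) to the closed \(x\)-axis is therefore invariant for the one-dimensional axis dynamics of \Cref{def:coordinatewise}, started from \(X_0>0\):
\[
 \dd X_t=-b_{10}X_t^{r_{10}}\dd t+\sqrt{2b_{11}X_t^{r_{11}}}\dd B_1(t)+\int_0^\infty\int_0^{b_{12}X_{t-}^{r_{12}}}z\,\widetilde N_1(\dd t,\dd z,\dd u).
\]
The reason is that mass on the two open axes and on the origin cannot exchange except by flowing into the origin.

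\textbf{Excluding mass on the open axis.} I will use a strictly monotone test function playing the role of \(\e^{-y}\) in \Cref{lem:interior-exclusion}. Take \(g(x)=\e^{-x}\) on the axis. The same computation as in \eqref{eq:axis-laplace-generator} gives
\[
 \cL g(x)=\e^{-x}\Bigl[b_{10}x^{r_{10}}+b_{11}x^{r_{11}}+\tfrac{b_{12}}{\Gamma(\alpha_1)}x^{r_{12}}\Bigr].
\]
This is strictly positive for \(x>0\), precisely because \(b_{10}+b_{11}+b_{12}>0\), and all active exponents are positive by \eqref{eq:boundary-compatible}. Now repeat the localization/submartingale argument of \Cref{lem:interior-exclusion}. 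Localize on \([\varepsilon,N]\), use boundedness of \(g\) and conservativeness, and use that \(g(0)=1\ge g\) at absorption. This yields \(P_tg\ge g\) on the axis, with strict inequality at every \(x>0\) for \(t>0\). Since \(\e^{-y}=1\) on the \(x\)-axis, take \(F(x,y)=\e^{-x}\) as a global bounded test function and restrict attention to the axis. Invariance gives \(\int(P_tF-F)\dd\pi=0\), which forces \(\pi(\{(x,0):x>0\})=0\).

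An alternative is to work with \(F(x,y)=\e^{-x-y}\) directly on \(\partial E\). Its axis generators are \(\ge0\) and strictly positive off the origin, so one application of the argument handles both axes at once. That settles \(\pi=\delta_{(0,0)}\). Invariance of \(\delta_{(0,0)}\) itself is immediate, since the origin is absorbing.

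\textbf{Main obstacle.} The delicate step is justifying the submartingale property of \(F(Z_t)\) across the switching time \(\tau_0\) and along the axes. This requires the strong Markov property and the axis realization of \Cref{prop:axis-realization}, exactly as invoked in \Cref{lem:interior-exclusion}. Beyond that, the only new point is strict positivity: it needs \eqref{eq:axis-nontrivial} together with the vanishing of coefficients at zero.
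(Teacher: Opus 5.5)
Your proposal is correct and follows essentially the paper's route. Part (i) uses frozen boundary states together with \Cref{lem:interior-exclusion}, and uniqueness in (ii) uses the Laplace-type test functions \(\e^{-x}\) and \(\e^{-y}\), whose axis generators are strictly positive off the origin under \eqref{eq:axis-nontrivial}. The only cosmetic difference is how the \(y\)-axis is handled. The paper first applies \(\e^{-y}\) on all of \(E\) to push \(\pi\) onto the \(x\)-axis and then uses \(\e^{-x}\). You instead exploit \(\pi(\partial E)=1\) to work axis by axis, or with \(\e^{-x-y}\) on \(\partial E\) at once.
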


\begin{proof}
\noindent\textup{(i)}
By \Cref{def:absorbed}, we have $P_t^{\tau_0}((x_0,y_0),\cdot)=\delta_{(x_0,y_0)}$ for
every $(x_0,y_0)\in\partial E$ and $t\ge0.$
If \(\pi(\partial E)=1\), then, for every \(h\in B_b(E)\),
\[
 \int_E P_t^{\tau_0}h\dd\pi
 =\int_{\partial E}h(u,v)\pi(\dd u,\dd v)
 =\int_Eh\dd\pi.
\]
Thus \(\pi\) is invariant.  Conversely,
\Cref{lem:interior-exclusion} shows that every invariant probability
\(\pi\) satisfies \(\pi(\partial E)=1\).

\textup{(ii)}
Let \(\pi\) be invariant for \((P_t)_{t\ge0}\).
By \Cref{lem:interior-exclusion}, we have \(\pi(\partial E)=1\). Assume now that \eqref{eq:axis-nontrivial} holds,
and let \(f\) be as in \Cref{lem:interior-exclusion}.
By \eqref{eq:laplace-subinvariant}, \(P_tf\ge f\) on \(E\).
By \eqref{eq:axis-laplace-generator} and \eqref{eq:axis-nontrivial}, we have \(\cL f(0,y)>0\) for every \(y>0\).
Similar to the argument in \eqref{eq0911}, we obtain
\beqlb\label{eq0911a}
 P_tf(x_0,y_0)>f(x_0,y_0)\quad\text{if }y_0>0,\qquad
 P_tf(x_0,0)=f(x_0,0)=1,
 \qquad t>0.
\eeqlb
{By invariance of \(\pi\),}
\(\int_E(P_tf-f)\dd\pi=0\). By \eqref{eq0911a}, we have 
\(\pi([0,\infty)\times\{0\})=1\).
Set \(g(x,y):=\e^{-x}\). Similar to \eqref{eq0911a},
\[
 P_tg(x_0,0)>g(x_0,0)\quad\text{if }x_0>0,\qquad
 P_tg(0,0)=g(0,0)=1,\qquad t>0.
\]
Since \(\pi([0,\infty)\times\{0\})=1\), it yields
\[
 0=\int_{(0,\infty)\times\{0\}}
       \bigl(P_tg(u,0)-g(u,0)\bigr)\pi(\dd u,\dd v).
\]
Thus \(\pi((0,\infty)\times\{0\})=0\) and
\(\pi=\delta_{(0,0)}\). Hence \(\delta_{(0,0)}\) is invariant.
\end{proof}

\subsection{The predator--prey model}

Throughout this subsection, \Cref{ass:dissipative} holds;
in particular, \(a_1>0>a_2\).  
We use the Lyapunov functions from
\cite[Theorem~1.2]{JiXiongXuYang2026}.
Define
\[
 H(x,y):=\frac{x^{q_1}}{q_1}
 +\frac{a_1}{-a_2}\frac{y^{q_2}}{q_2},
 \qquad
 {W(x,y):=x^{q_1+r_{10}-1}+y^{q_2+r_{20}-1},}
\]
where $q_i$ is given by \eqref{eqq}.
Here {\(a_1/(-a_2)>0\)} and \(q_1,q_2>0\). Hence
\(H\ge0\) on \(E\) with equality only at $(0, 0)$.
Moreover,
\[
 \inf_{\substack{(x,y)\in E\\x\vee y\ge N}}H(x,y)
 \ge\min\left\{\frac{N^{q_1}}{q_1},
                 \frac{a_1}{-a_2}\frac{N^{q_2}}{q_2}\right\}
 \rightarrow\infty
\]
as $N \rightarrow \infty.$
The interaction terms in \eqref{eq:model} cancel, giving
\[
 a_1x^{\theta_1}y^{\kappa_1}H_x'
 +a_2y^{\theta_2}x^{\kappa_2}H_y'
 =a_1x^{\kappa_2}y^{\kappa_1}
  -a_1x^{\kappa_2}y^{\kappa_1}=0.
\]
Since \(q_i+r_{i0}-1>0\) for \(i=1,2\),
\(W\) has compact sublevel sets in \(E\).
The following lemma follows from
\cite[Theorem~1.2]{JiXiongXuYang2026}. We state the result without proof.
\begin{lemma}
\label{lem:predator-source-occupation}
Under \Cref{ass:dissipative}, the SSP $Z^{\tau_0}$ is
conservative, and there exist constants \(C,c>0\) such that
\begin{equation}
 \cL H(x,y)\le C-cW(x,y),
 \qquad (x,y)\in E^\circ,
 \label{eq:Foster-H}
\end{equation}
and
\begin{equation}
 {\frac1T\E_{x_0,y_0}\int_0^{T\wedge\tau_0}W(Z_s)\dd s
 \le \frac Cc+\frac{H(x_0,y_0)}{cT},
 \qquad (x_0,y_0)\in E^\circ,\quad T>0.}
 \label{eq:source-occupation}
\end{equation}
\end{lemma}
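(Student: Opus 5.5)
Although the paper takes this lemma from \cite[Theorem~1.2]{JiXiongXuYang2026}, I would verify it directly. The plan is to compute \(\cL H\) on \(E^\circ\) using \eqref{eq:generator}, show it satisfies the Foster bound \eqref{eq:Foster-H}, and then get conservativeness and \eqref{eq:source-occupation} from the localized Dynkin inequality \eqref{eq:localized-dynkin-inequality}.

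\emph{Computing \(\cL H\).} Since the interaction terms cancel, only branching terms remain. For the first coordinate,
\[
\cL\frac{x^{q_1}}{q_1}=-b_{10}x^{q_1+r_{10}-1}+(q_1-1)b_{11}x^{q_1+r_{11}-2}+b_{12}x^{r_{12}}\int_0^\infty\frac{(x+z)^{q_1}-x^{q_1}-q_1x^{q_1-1}z}{q_1}\,\mu_1(\dd z).
\]
By scaling, the jump integral equals \(C_{q_1,\alpha_1}x^{q_1-\alpha_1}\). This constant is finite because \(0<q_1<\alpha_1\). Its sign is that of \(q_1-1\), since the integrand is concave or convex in \(z\) accordingly. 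The second coordinate is the same, with weight \(a_1/(-a_2)\).

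\emph{Foster bound.} If \(q_i\le1\), the diffusion and jump contributions are nonpositive, so \(\cL H\le -b_{10}x^{q_1+r_{10}-1}-\dots\). If \(q_i>1\), they are positive powers \(x^{q_i+r_{i1}-2}\) and \(x^{q_i+r_{i2}-\alpha_i}\). These have exponents strictly smaller than \(q_i+r_{i0}-1\) by \eqref{eq:dissipative-dominance}, and nonnegative by the extra hypothesis in \Cref{ass:dissipative}. Hence they are bounded near zero and, by Young's inequality, dominated at infinity by \(\tfrac12 b_{i0}x^{q_i+r_{i0}-1}\). This gives \(\cL H\le C-cW\) on \(E^\circ\).

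\emph{Conservativeness and occupation bound.} Apply \eqref{eq:localized-dynkin-inequality} to \(f=H\ge0\) with localization \(t\wedge\tau_\varepsilon^-\wedge\tau_N^+\). Dropping the nonnegative term \(\E H(Z_{\cdot})\) yields
\[
c\,\E\int_0^{T\wedge\tau_\varepsilon^-\wedge\tau_N^+}W(Z_s)\dd s\le H(x_0,y_0)+CT.
\]
Keeping the term \(\E H\) instead gives \(\Pp(\tau_N^+\le T\wedge\tau_\varepsilon^-)\inf_{x\vee y\ge N}H\le H(x_0,y_0)+CT\). Letting \(\varepsilon\downarrow0\) and then \(N\to\infty\) shows \(\Pp(\tau_\infty\le T,\ \tau_\infty\le\tau_0)=0\). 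This is conservativeness of the SSP. Monotone convergence then gives \eqref{eq:source-occupation} after dividing by \(cT\).

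The main obstacle is the jump term when \(q_i>1\). There I must check that the scaling identity holds, which uses the conditions \(q_i<\alpha_i\) and \(\alpha_i>1\) for finiteness near zero and infinity. I also need care with exponents near \(x=0\): the requirement \(q_i+r_{i2}-\alpha_i\ge0\) is exactly what keeps that term bounded, hence \(\cL H\) bounded on \([\varepsilon,N]^2\).
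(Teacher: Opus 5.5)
Your proposal is correct and follows essentially the paper's route: the paper imports this lemma from \cite[Theorem~1.2]{JiXiongXuYang2026}, but it carries out the same generator computation (on the axis) in \eqref{eq:predator-axis-generator}, and the same localized Dynkin--Fatou limit in the proof of \Cref{lem:cw-occupation}. One small correction to your closing remark: \(\cL H\) is bounded on \([\varepsilon,N]^2\) by continuity whatever the exponents, so the hypotheses \(q_i+r_{i1}-2\ge0\) and \(q_i+r_{i2}-\alpha_i\ge0\) (when \(q_i>1\)) are needed instead for the global bound \eqref{eq:Foster-H} as \(x\downarrow0\) or \(y\downarrow0\) inside \(E^\circ\), which is how you correctly used them in your Foster-bound paragraph.
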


We next extend the above Lyapunov estimate to the CWP $Z$.

\begin{lemma}
\label{lem:cw-occupation}
Suppose that \Cref{ass:dissipative} and
\eqref{eq:boundary-compatible} hold.
{Then the CWP $Z$ is conservative. Moreover, there exist constants \(C,c>0\) such that, for every
\((x_0,y_0)\in E\) and \(t\ge0\),}
\begin{equation}
 {\E_{x_0,y_0}H(Z_t)
 +c\E_{x_0,y_0}\int_0^t W(Z_s)\dd s
 \le H(x_0,y_0)+Ct.}
 \label{eq:cw-global-Foster}
\end{equation}
Consequently, \(\{\Pi_T^{(x_0,y_0)}:T\ge1\}\) is tight
for every \((x_0,y_0)\in E\).
\end{lemma}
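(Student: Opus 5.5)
The plan is to glue the interior estimate of \Cref{lem:predator-source-occupation} to one-dimensional Foster estimates along the two axes, using the strong Markov property of the CWP at \(\tau_0\) from \Cref{prop:axis-realization}.

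\textbf{Axis generators.} Under \eqref{eq:boundary-compatible}, on the open \(x\)-axis the CWP evolves by the one-dimensional dynamics of \Cref{def:coordinatewise}, and \(H(x,0)=x^{q_1}/q_1\). I would compute the one-dimensional generator of \(x^{q_1}/q_1\):
\[
 -b_{10}x^{q_1+r_{10}-1}+(q_1-1)b_{11}x^{q_1+r_{11}-2}
 +b_{12}c_{q_1}x^{q_1+r_{12}-\alpha_1}.
\]
Here \(c_{q_1}\) is finite because \(0<q_1<\alpha_1\); it is negative for \(q_1<1\) and positive for \(q_1>1\). By \eqref{eq:dissipative-dominance} the first term dominates at infinity. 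Near zero, every exponent is nonnegative whenever the corresponding coefficient term is positive, by the additional assumption for \(q_1>1\); when \(q_1\le1\) the noise terms are nonpositive. Hence \(\cL H(x,0)\le C-cW(x,0)\) on \((0,\infty)\), and the same bound holds on the \(y\)-axis. These are exactly the one-dimensional computations behind \eqref{eq:Foster-H}, with the interaction removed. At the origin, \(\cL H=0\le C-cW(0,0)\) since \(W(0,0)=0\). Enlarging \(C\) and shrinking \(c\), \eqref{eq:Foster-H} then holds on all of \(E\) for the CWP generator.

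\textbf{Conservativeness and \eqref{eq:cw-global-Foster}.} Before \(\tau_0\), the SSP is conservative by \Cref{lem:predator-source-occupation}, so \(\tau_\infty>\tau_0\) almost surely on \(\{\tau_0<\infty\}\). After \(\tau_0\) the process is a one-dimensional equation started at \(Z_{\tau_0}\). Its nonexplosion follows from \(\cL H\le C\) on the axis, with \(H\to\infty\): localize at \(\tau_N^+\), apply \eqref{eq:localized-dynkin-inequality} and Markov's inequality. For \eqref{eq:cw-global-Foster}, I would apply the localized Dynkin inequality to \(H+c\int W\) in three pieces:
\begin{enumerate}[label=\textup{(\alph*)}]
\item on \([0,t\wedge\tau_\varepsilon^-\wedge\tau_N^+]\) in the interior;
\item by the strong Markov property at \(\tau_0\wedge t\), on the axis segment, localized away from \(0\) and \(\infty\) along the axis;
\item at the absorbing origin, trivially.
\end{enumerate}
Letting \(\varepsilon\to0\) and \(N\to\infty\) with Fatou's lemma (using \(H,W\ge0\)) in each piece gives
\[
 \E H(Z_{\tau_0\wedge t})+c\E\int_0^{\tau_0\wedge t}W\le H(x_0,y_0)+C\E(\tau_0\wedge t).
\]
Conditioning on \(\mathcal F_{\tau_0}\) gives the axis analogue with \(t-\tau_0\). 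Summing the two yields \eqref{eq:cw-global-Foster}.

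\textbf{Main obstacle.} The delicate step is the passage \(\varepsilon\downarrow0\) at \(\tau_0\). It requires \(H(Z_{t\wedge\tau_\varepsilon^-\wedge\tau_N^+})\to H(Z_{t\wedge\tau_0\wedge\tau_N^+})\), which holds by continuity of \(H\) on \(E\) and the definition of \(Z_{\tau_0}\) as a left limit. Fatou's lemma only gives an inequality, which suffices here. The process can also hit the origin directly, or hit an axis and then the origin; both cases are handled by iterating the strong Markov step.

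\textbf{Tightness.} Dropping \(H\ge0\) in \eqref{eq:cw-global-Foster} gives
\[
 \Pi_T^{(x_0,y_0)}(W)\le \frac{C}{c}+\frac{H(x_0,y_0)}{cT},
\]
which is bounded uniformly in \(T\ge1\). Since \(W\) has compact sublevel sets in \(E\), Markov's inequality yields tightness of \(\{\Pi_T^{(x_0,y_0)}:T\ge1\}\).
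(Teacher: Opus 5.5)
Your proposal is correct and follows essentially the same route as the paper. Like the paper, you obtain the pre-boundary Foster estimate from \Cref{lem:predator-source-occupation} by localization and Fatou's lemma, compute $\cL H(x,0)$ explicitly on the axes under \Cref{ass:dissipative}, glue the pieces at $\tau_0$ by the strong Markov property, and get tightness from Markov's inequality; the only minor difference is that you derive nonexplosion of the axis dynamics directly from the axis Foster bound, whereas the paper obtains conservativeness from source-conservativeness together with \Cref{prop:axis-realization}.
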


\begin{proof}
By \Cref{lem:predator-source-occupation}, the model
\eqref{eq:model} is source-conservative. By
\Cref{prop:axis-realization}, $\mathbb{P}_{x_0, y_0}(\tau_\infty = \infty) = 1$ for every $(x_0, y_0) \in E$. Then the
CWP $Z$ is conservative. For \((x_0,y_0)\in E^\circ\), apply
\eqref{eq:localized-dynkin-inequality} to \(H\), use
\eqref{eq:Foster-H}, and let \(\varepsilon\downarrow0\) and
\(N\to\infty\). By Fatou's lemma and monotone convergence, we have
\beqlb\label{eq:cw-preboundary-Foster}
		\E_{x_0,y_0}H(Z_{t\wedge\tau_0})
		+c\E_{x_0,y_0}\int_0^{t\wedge\tau_0}
		W(Z_s)\dd s \le H(x_0,y_0)+C\E_{x_0,y_0}(t\wedge\tau_0).
	\eeqlb
In particular,
\beqnn 
\E_{x_0,y_0}\!\left[
H(Z_{t\wedge\tau_0})
\1_{\{\tau_0\le t\}}\right]\le H(x_0,y_0)+Ct.
\eeqnn
By \eqref{eq:generator} and
\cite[Lemma~A.3]{JiXiongXuYang2026}, for \(x>0\),
\begin{equation}
\begin{aligned}
 \cL H(x,0)
 = -b_{10}x^{q_1+r_{10}-1}
 +(q_1-1)b_{11}x^{q_1+r_{11}-2}+\frac{(q_1-1)\Gamma(\alpha_1-q_1)}
 {\Gamma(\alpha_1)\Gamma(2-q_1)}
 b_{12}x^{q_1+r_{12}-\alpha_1}.
\end{aligned}
\label{eq:predator-axis-generator}
\end{equation}
If \(q_1\le1\), the last two terms on the right-hand side of
\eqref{eq:predator-axis-generator} are nonpositive.
If \(q_1>1\), by \Cref{ass:dissipative} and
\eqref{eq:dissipative-dominance}, we have $
0\le q_1+r_{11}-2<q_1+r_{10}-1$ if $b_{11}>0,$
and $0\le q_1+r_{12}-\alpha_1<q_1+r_{10}-1$ if
$b_{12}>0.$
Since \(b_{10}>0\), there exist constants  {\(C_1\ge0\) and \(c_1>0\)}
such that
\beqnn 
 {\cL H(x,0)\le C_1-c_1x^{q_1+r_{10}-1}=C_1-c_1W(x,0),\qquad x>0.}
\eeqnn
Similarly, there exist constants \(C_2\ge0\) and \(c_2>0\) such that
\beqnn 
 {\cL H(0,y)\le C_2-c_2y^{q_2+r_{20}-1}=C_2-c_2W(0,y),\qquad y>0.}
\eeqnn
Enlarge \(C\) to \(\max\{C,C_1,C_2\}\) and decrease \(c\) to
\(\min\{c,c_1,c_2\}\).
For an initial state on either open axis, apply It\^o's formula
after stopping the surviving coordinate on \([1/k,k]\), and let
\(k\to\infty\). Fatou's lemma and monotone convergence give
\eqref{eq:cw-global-Foster} for boundary initial states.
At \((0,0)\), the estimate is immediate because the process is constant.

On \(\{\tau_0\le t\}\), the process reaches the boundary
at time \(\tau_0\). Applying the strong Markov property
at \(\tau_0\) and the boundary estimate over the remaining
time \(t-\tau_0\), we obtain
\[
	 \E_{x_0,y_0}\left[
	\1_{\{\tau_0\le t\}}
	\left\{H(Z_t)
	+c\int_{t\wedge\tau_0}^t
	W(Z_s)\dd s\right\}\right] 
	 \le
	\E_{x_0,y_0}\left[
	\1_{\{\tau_0\le t\}}
	H(Z_{t\wedge\tau_0})
	\right]
	+C\E_{x_0,y_0}(t-\tau_0)^+.
\]
On \(\{\tau_0>t\}\), we have \(t\wedge\tau_0=t\), so
the occupation integral above vanishes and $
H(Z_t)
=H(Z_{t\wedge\tau_0}).$
Adding the contribution from \(\{\tau_0>t\}\) gives
\[
\begin{aligned}
	 \E_{x_0,y_0}H(Z_t)
	+c\E_{x_0,y_0}\int_{t\wedge\tau_0}^t
	W(Z_s)\dd s \le
	\E_{x_0,y_0}H(Z_{t\wedge\tau_0})
	+C\E_{x_0,y_0}(t-\tau_0)^+.
\end{aligned}
\]
Adding the occupation integral up to \(t\wedge\tau_0\)
and using \eqref{eq:cw-preboundary-Foster}, we obtain
\[
\begin{aligned}
	&\E_{x_0,y_0}H(Z_t)
	+c\E_{x_0,y_0}\int_0^t
	W(Z_s)\dd s\\
	&\qquad\le H(x_0,y_0)
	+C\E_{x_0,y_0}\bigl[(t\wedge\tau_0)+(t-\tau_0)^+\bigr]
	=H(x_0,y_0)+Ct.
\end{aligned}
\]
This proves \eqref{eq:cw-global-Foster}, which also gives, for \(T\ge1\),
\[
 \int_E W(u,v)\Pi_T^{(x_0,y_0)}(\dd u,\dd v)
 \le\frac{H(x_0,y_0)}{cT}+\frac Cc.
\]
Since \(\{(x,y)\in E:W(x,y)\le R\}\) is compact
for every \(R>0\), the preceding bound and Markov's
inequality imply tightness.
\end{proof}

For the SSP, we combine the occupation estimate
in \Cref{lem:predator-source-occupation} with tightness of the
terminal boundary distribution.

\begin{lemma}
\label{lem:abs-tightness}
Suppose that \Cref{ass:dissipative} holds.
Then, for every \((x_0,y_0)\in E\), the family
\(\{\Pi_T^{\tau_0,(x_0,y_0)}:T\ge1\}\) is tight.
If \((P_t^{\tau_0})_{t\ge0}\) is weak Feller, every weak limit
as \(T\to\infty\) is invariant and concentrated on \(\partial E\).
\end{lemma}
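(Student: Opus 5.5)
The plan is to split \(\Pi_T^{\tau_0,(x_0,y_0)}\) into its pre-absorption part and its absorbed part. For \((x_0,y_0)\in\partial E\) the law is \(\delta_{(x_0,y_0)}\) for every \(T\), so tightness is trivial. For \((x_0,y_0)\in E^\circ\), write, for a Borel set \(A\subset E\),
\[
 \Pi_T^{\tau_0,(x_0,y_0)}(A)
 =\frac1T\E_{x_0,y_0}\int_0^{T\wedge\tau_0}\1_A(Z_s)\dd s
 +\frac1T\E_{x_0,y_0}\bigl[(T-\tau_0)^+\1_A(Z_{\tau_0})\bigr].
\]
Here we use that the SSP is conservative by \Cref{lem:predator-source-occupation}, so no mass is lost, and that after \(\tau_0\) it sits at \(Z_{\tau_0}\).

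For the first term, take \(A=\{W>R\}\). Then \eqref{eq:source-occupation} and Markov's inequality bound it by \(R^{-1}\bigl(C/c+H(x_0,y_0)/(cT)\bigr)\), uniformly in \(T\ge1\). For the second term, it suffices to show that the law of \(Z_{\tau_0}\) on \(\{\tau_0<\infty\}\) is tight, since this term is at most \(\Pp_{x_0,y_0}(\tau_0<\infty,\,Z_{\tau_0}\in A)\). Apply \eqref{eq:localized-dynkin-inequality} to \(H\ge0\) together with \eqref{eq:Foster-H}, stopped at \(t\wedge\tau_\varepsilon^-\wedge\tau_N^+\), and let \(N\to\infty\) using conservativeness. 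This gives \(\E H(Z_{t\wedge\tau_\varepsilon^-})\le H(x_0,y_0)+Ct\). The drawback is that this grows with \(t\).

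A better route is to use the process \(H(Z_{t\wedge\tau_0})+c\int_0^{t\wedge\tau_0}W(Z_s)\,ds-C(t\wedge\tau_0)\). Even so, the boundary value needs a bound uniform in time, so I would instead split on \(\{\tau_0\le T\}\). On this event the second term involves \(Z_{\tau_0}\) only with \(\tau_0\le T\), and \(\E[H(Z_{\tau_0})\1_{\{\tau_0\le T\}}]\le H(x_0,y_0)+CT\). This still gives only an \(O(T)\) bound, but it comes with the weight \((T-\tau_0)^+/T\le1\), so it is still not good enough.

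The cleanest fix is to note that \(H(Z_{\tau_0})\1_{\{\tau_0\le t\}}\) increases to \(H(Z_{\tau_0})\1_{\{\tau_0<\infty\}}\), and then prove \(\sup_t \E H(Z_{t\wedge\tau_0}) <\infty\) as follows. Because \(W\) dominates a positive power strictly larger than the one in \(H\) (\(q_i+r_{i0}-1>q_i\), since \(r_{i0}>1\)), we have \(C-cW\le C'-c'H\). Combined with \eqref{eq:Foster-H}, this gives \(\cL H\le C'-c'H\). The exponential-weight argument of \Cref{lem:imm-lyapunov} then yields \(\E H(Z_{t\wedge\tau_0})\le H(x_0,y_0)+C'/c'\) uniformly in \(t\). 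Fatou's lemma then gives \(\E[H(Z_{\tau_0});\tau_0<\infty]<\infty\), which is the expected main obstacle. Since \(H\) has compact sublevel sets, we get \(\Pp(\tau_0<\infty,\ H(Z_{\tau_0})>R)\le (H(x_0,y_0)+C'/c')/R\). Putting both terms together, \(\Pi_T^{\tau_0,(x_0,y_0)}(\{W>R\}\cup\{H>R\})\to0\) as \(R\to\infty\) uniformly in \(T\ge1\), and this set has compact complement. This proves tightness.

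Finally, if \((P^{\tau_0}_t)\) is weak Feller, \Cref{lem:time-average-limits} with \(D=E\) (the process is conservative and càdlàg) shows that every weak limit is invariant. \Cref{lem:interior-exclusion}, applicable because \(a_2<0\) and the SSP is conservative, then shows every such limit is concentrated on \(\partial E\).
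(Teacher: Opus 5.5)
Your decomposition into the pre-absorption occupation term and the frozen boundary term, and your bound on the first term via \eqref{eq:source-occupation}, match the paper. The gap is in the step you flag as the main obstacle. The exponential-weight argument of \Cref{lem:imm-lyapunov} does not give $\sup_t\E_{x_0,y_0}H(Z_{t\wedge\tau_0})\le H(x_0,y_0)+C'/c'$. The inequality $\cL H\le C'-c'H$ holds only on $E^\circ$. For the frozen process the generator vanishes on $\partial E$, while $c'H$ is unbounded there, so $e^{c't}H(Z^{\tau_0}_t)$ is not controlled after $\tau_0$. If you stop at $\tau_0$ instead, you only get
\[
\E_{x_0,y_0}\bigl[e^{c'(t\wedge\tau_0)}H(Z_{t\wedge\tau_0})\bigr]\le H(x_0,y_0)+\frac{C'}{c'}\bigl(\E_{x_0,y_0}e^{c'(t\wedge\tau_0)}-1\bigr).
\]
Here the random weight $e^{c'(t\wedge\tau_0)}$ cannot be divided out on $\{\tau_0\le t\}$, as $e^t$ is in \Cref{lem:imm-lyapunov}. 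This yields a uniform bound for $\E[H(Z_t)\1_{\{t<\tau_0\}}]$ only, not for $\E[H(Z_{\tau_0})\1_{\{\tau_0\le t\}}]$. So the finiteness of $\E[H(Z_{\tau_0});\tau_0<\infty]$, on which your tightness argument rests, is left unproved.

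You do not need it. You correctly reduced the second term to tightness of the law of $Z_{\tau_0}$ on $\{\tau_0<\infty\}$, and that is automatic. It is a single finite Borel measure on $E\subset\R^2$, hence tight, with no moment bound and no dependence on $T$. This is exactly the paper's argument. It sets $Z_\partial:=Z_{\tau_0}$ on $\{\tau_0<\tau_\infty\}$ and $(0,0)$ otherwise, and picks a compact $K_\partial$ with $\Pp_{x_0,y_0}(Z_\partial\notin K_\partial)<\varepsilon/2$. It then bounds $\Pi_T^{\tau_0,(x_0,y_0)}((K_\partial\cup K_R)^c)$ by your occupation term plus $\Pp_{x_0,y_0}(Z_\partial\notin K_\partial)$. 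With this replacement the rest of your proof is correct. That includes invariance via \Cref{lem:time-average-limits} and boundary concentration via \Cref{lem:interior-exclusion}, which is what \Cref{prop:noimm-classification} uses.
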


\begin{proof}
{ For \((x_0,y_0)\in\partial E\), the SSP satisfies
	\(Z_t^{\tau_0}=(x_0,y_0)\) for all \(t\ge0\), so $
	\Pi_T^{\tau_0,(x_0,y_0)}=\delta_{(x_0,y_0)}$ for $T>0.$ The conclusion is therefore immediate on \(\partial E\).
	It suffices to consider \((x_0,y_0)\in E^\circ\).}
For \(R>0\), let
\(K_R:=\{(u,v)\in E:W(u,v)\le R\}\), which is compact.
Define
\[
 Z_\partial:=
 \begin{cases}
 Z_{\tau_0},&\tau_0<\tau_\infty,\\
 (0,0),&\text{otherwise}.
 \end{cases}
\]
Then \(Z_\partial\) is an \(E\)-valued random variable, so its law is tight.
Given \(\varepsilon>0\), choose a compact \(K_\partial\subset E\)
such that
\(\Pp_{x_0,y_0}(Z_\partial\notin K_\partial)
<\varepsilon/2\).
Choose \(R>0\) such that
\((C+H(x_0,y_0))/(cR)<\varepsilon/2\), where \(C,c\) are the
constants in \eqref{eq:source-occupation}.
Observe that, almost surely,
\beqnn 
 \1_{\{Z_s^{\tau_0}\notin K_\partial\cup K_R\}}\le
 \1_{\{s<\tau_0\}}\1_{\{W(Z_s^{\tau_0})>R\}}
 +\1_{\{s\ge\tau_0\}}
  \1_{\{Z_\partial\notin K_\partial\}}.
 \eeqnn 
Observe that, for \(T\ge1\), \eqref{eq:source-occupation} now gives
\begin{align*}
 \Pi_T^{\tau_0,(x_0,y_0)}((K_\partial\cup K_R)^c)
 &\le\frac1T\E_{x_0,y_0}\int_0^{T\wedge\tau_0}
       \1_{\{W(Z_s)>R\}}\dd s
   +\Pp_{x_0,y_0}(Z_\partial\notin K_\partial)\\
 &\le\frac1{TR}\E_{x_0,y_0}\int_0^{T\wedge\tau_0}
       W(Z_s)\dd s+\frac{\varepsilon}{2}\\
 &\le\frac{C+H(x_0,y_0)}{cR}+\frac{\varepsilon}{2}
 <\varepsilon.
\end{align*}
Thus \(\{\Pi_T^{\tau_0,(x_0,y_0)}:T\ge1\}\) is tight.
Under the weak Feller assumption,
\Cref{lem:time-average-limits} shows that every weak limit
is invariant; it is concentrated on \(\partial E\) by
\Cref{prop:noimm-classification}. The result follows.
\end{proof}

We now prove \Cref{thm:noimm-empirical}.

\begin{proof}[Proof of \Cref{thm:noimm-empirical}]
{\textup{(i)}} By \Cref{lem:predator-source-occupation}, the SSP $Z^{\tau_0}$ is conservative. Part \textup{(i)} therefore follows
from \Cref{prop:noimm-classification}\textup{(i)}.

{\textup{(ii)}} Under \eqref{eq:boundary-compatible},
\Cref{lem:cw-occupation} gives conservativeness and tightness
for the CWP $Z$. Since \(b_{10},b_{20}>0\),
\eqref{eq:axis-nontrivial} holds. Then
\Cref{prop:noimm-classification}\textup{(ii)} shows that
\(\delta_{(0,0)}\) is its unique invariant probability.
If \((P_t)_{t\ge0}\) is weak Feller,
\Cref{lem:time-average-limits} gives
\(\Pi_T^{(x_0,y_0)}\Rightarrow\delta_{(0,0)}\) as
\(T\to\infty\), for every \((x_0,y_0)\in E\).
The result follows.
\end{proof}

\subsection{The competitive model}

We now prove \Cref{thm:competitive-noimm}, using a
first-moment estimate to establish conservativeness
and tightness of the time-averaged transition laws.

\begin{proof}[Proof of \Cref{thm:competitive-noimm}]
	Take \(c_1=1\) in $S$, which is given by \eqref{SVW}. By \eqref{eq:generator}, we have
	\[
	\cL S(x,y)=a_1x^{\theta_1}y^{\kappa_1}
	+a_2y^{\theta_2}x^{\kappa_2}
	-b_{10}x^{r_{10}}-b_{20}y^{r_{20}}\le0.
	\]
For \((x_0,y_0)\in E^\circ\), by \eqref{eq:localized-dynkin-inequality} and
Fatou's lemma, one sees that
\beqlb\label{eq3} 
 \E_{x_0,y_0}S(Z_{t\wedge\tau_N^+\wedge\tau_0})\le x_0+y_0.
\eeqlb
On \(\{\tau_N^+\le t\wedge\tau_0\}\), \(S(Z_{t\wedge\tau_N^+\wedge\tau_0})\ge N\). Hence
	\[
	\Pp_{x_0,y_0}(\tau_N^+\le t\wedge\tau_0)
	\le\frac{x_0+y_0}{N},
	\qquad t\ge0,\quad N>x_0\vee y_0.
	\]
	Letting \(N\to\infty\) excludes explosion before or at
	\(\tau_0\). Thus both realizations are conservative by
	Definition~\ref{def:absorbed} and
	Proposition~\ref{prop:axis-realization}.
	By \eqref{eq3} and Fatou's lemma, for every \((x_0,y_0)\in E\), one sees that
	$\E_{x_0,y_0}\bigl[X_{t\wedge\tau_0}+Y_{t\wedge\tau_0}\bigr]
		\le x_0+y_0$ and $\E_{x_0,y_0}\bigl[X_t+Y_t\bigr]
		\le x_0+y_0$ for $t \ge 0.$
	Consequently, both families of time-averaged transition
	probabilities are tight by Markov's inequality.
	The classification of invariant probabilities follows
	from \Cref{prop:noimm-classification}.
	Under \eqref{eq:axis-nontrivial}, the CWP $Z$ has the unique invariant probability
	\(\delta_{(0,0)}\).
	If its semigroup \((P_t)_{t\ge0}\) is also weak Feller, by
	\Cref{lem:time-average-limits}, $
	\Pi_T^{(x_0,y_0)}\Rightarrow\delta_{(0,0)}$ as $T\to\infty$ for any $(x_0,y_0)\in E.$
	The proof ends here.
\end{proof}

\subsection{The cooperative model}
\label{sec:cooperative-noimm-proofs}

 Throughout this subsection, fix a nonincreasing
function \(\chi\in C^\infty((0,\infty))\) such that
\(0\le\chi\le1\), \(\chi=1\) on \((0,1]\), and
\(\chi=0\) on \([2,\infty)\).  
We begin with the power calculation and the interior occupation
estimate used in the proofs below.

\begin{lemma}
\label{lem:cooperative-power-calculus}
For \(i=1,2\), \(u>0\), and \(q<1\),
\[
 \int_0^\infty\bigl[(u+z)^q-u^q-qu^{q-1}z\bigr]\mu_i(\dd z)
 =\frac{q(q-1)\Gamma(\alpha_i-q)}
        {\Gamma(\alpha_i)\Gamma(2-q)}\,u^{q-\alpha_i}.
\]
In particular, for \(p>0\),
\[
 \int_0^\infty\bigl[(u+z)^{-p}-u^{-p}+pu^{-p-1}z\bigr]
                 \mu_i(\dd z)
 =\frac{\Gamma(p+\alpha_i)}{\Gamma(p)\Gamma(\alpha_i)}
       u^{-p-\alpha_i}.
\]
\end{lemma}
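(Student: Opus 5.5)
The plan is to reduce to $u=1$ by scaling and then evaluate the resulting constant as a Beta integral, after two integrations by parts.

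\emph{Scaling.} Write $c_{\alpha}:=\alpha(\alpha-1)/(\Gamma(\alpha)\Gamma(2-\alpha))$, with $\alpha=\alpha_i$. Substituting $z=uw$ gives
\[
 \int_0^\infty\bigl[(u+z)^q-u^q-qu^{q-1}z\bigr]\mu_i(\dd z)
 =c_{\alpha}\,u^{q-\alpha}\int_0^\infty g(w)\,w^{-1-\alpha}\dd w,
 \qquad g(w):=(1+w)^q-1-qw .
\]
So it suffices to compute the constant integral on the right.

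\emph{Integrability.} First I will check that the $w$-integral converges absolutely for $q<1$, $\alpha\in(1,2)$.
\begin{itemize}
\item Near $0$: $g(w)=O(w^2)$, so the integrand is $O(w^{1-\alpha})$.
\item Near $\infty$: $|g(w)|=O(w)$ if $q\le1$. If $0<q<1$ it is $O(w+w^q)$. In both cases the integrand is $O(w^{-\alpha})$, which is integrable because $\alpha>1$.
\end{itemize}

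\emph{Two integrations by parts.} Integrating by parts twice, using $g'(w)=q[(1+w)^{q-1}-1]$ and $g''(w)=q(q-1)(1+w)^{q-2}$, gives
\[
 \int_0^\infty g(w)w^{-1-\alpha}\dd w
 =\frac1\alpha\int_0^\infty g'(w)w^{-\alpha}\dd w
 =\frac{1}{\alpha(\alpha-1)}\int_0^\infty g''(w)w^{1-\alpha}\dd w .
\]
The boundary terms are $g(w)w^{-\alpha}$ and $g'(w)w^{1-\alpha}$.
\begin{itemize}
\item At $0$ they vanish because $g=O(w^2)$, $g'=O(w)$ and $\alpha<2$.
\item At $\infty$, $g(w)w^{-\alpha}=O(w^{1-\alpha}+w^{q-\alpha})\to0$.
\item At $\infty$, $g'$ is bounded, so $g'(w)w^{1-\alpha}\to0$ because $\alpha>1$.
\end{itemize}
Checking these boundary terms is the only step that needs care, especially the sign cases $q<0$ versus $0<q<1$.

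\emph{Beta integral.} The last integral is a Beta integral:
\[
 \int_0^\infty w^{1-\alpha}(1+w)^{q-2}\dd w
 =B(2-\alpha,\alpha-q)=\frac{\Gamma(2-\alpha)\Gamma(\alpha-q)}{\Gamma(2-q)} .
\]
It is finite since $2-\alpha>0$ and $\alpha-q>0$, the latter because $q<1<\alpha$. Multiplying by $c_\alpha q(q-1)/(\alpha(\alpha-1))$, the factors $\alpha(\alpha-1)$ and $\Gamma(2-\alpha)$ cancel. This leaves $q(q-1)\Gamma(\alpha-q)/(\Gamma(\alpha)\Gamma(2-q))$, which is the first formula.

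\emph{Second formula.} For the ``in particular'' statement, put $q=-p<0<1$ in the first formula. Then $q(q-1)=p(p+1)$ and $\Gamma(2-q)=\Gamma(p+2)=p(p+1)\Gamma(p)$. The constant therefore simplifies to $\Gamma(p+\alpha_i)/(\Gamma(p)\Gamma(\alpha_i))$, multiplying $u^{-p-\alpha_i}$, as claimed.
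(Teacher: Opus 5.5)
Your proof is correct and follows the paper's route: the substitution $z=uw$ reduces the claim to evaluating the constant $\int_0^\infty[(1+w)^q-1-qw]\,w^{-1-\alpha_i}\dd w$. The paper takes this constant from the power-integral formulas of Lemma~11 in Xiong--Yang--Zhou (competing populations). You compute it directly by two integrations by parts and a Beta integral, with integrability and boundary terms checked correctly, which makes the argument self-contained.
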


\begin{proof}
{
With the change of variables \(z=uv\), the identities follow
from the power-integral formulas in \cite[Lemma~11]{XiongYangZhouCompeting}.
}
\end{proof}

{
The following lemma will be used to prove existence of an
invariant probability concentrated on \(E^\circ\).
}
\begin{lemma}
\label{lem:interior-lyapunov}
{Assume \eqref{eq:cooperative-regularity},
\(r_{10},r_{20}>1\), and \(\iota_1=\iota_2=0\).} Suppose that
\[
 V(x,y)=S(x,y)+G(x,y),
\]
where \(G\in C^2(E^\circ)\) is nonnegative and nonincreasing
in each variable and $S$ is given by \eqref{SVW}. Let \(W:E^\circ\to[0,\infty)\) be continuous.
Assume that, for every \(M>0\), both sets
\[
 \{(x,y)\in E^\circ:V(x,y)\le M\},\qquad
 \{(x,y)\in E^\circ:W(x,y)\le M\}
\]
are compact subsets of \(E^\circ\), and that
\begin{equation}
 \cL V\le C-cW\qquad\text{on }E^\circ
 \label{eq:interior-lyapunov-drift}
\end{equation}
holds for some \(C,c>0\). Then the CWP \(Z\) solving \eqref{eq:model}
with initial state \((x_0,y_0)\in E^\circ\) is global and remains in
\(E^\circ\) almost surely.
Its restricted semigroup is weak Feller, its time-averaged
transition laws are tight on \(E^\circ\), and every subsequential
limit is an invariant probability \(\pi\) satisfying
{\(\pi(E^\circ)=1\) and}
\(\int_{E^\circ}W\dd\pi\le C/c\).
\end{lemma}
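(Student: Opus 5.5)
We prove \Cref{lem:interior-lyapunov} in four steps: nonexplosion and nonattainment of the boundary, an occupation bound, the weak Feller property, and a Krylov--Bogoliubov limit.

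\textbf{Step 1: the process stays in \(E^\circ\) for all time.} Fix \((x_0,y_0)\in E^\circ\). For \(M>V(x_0,y_0)\), set
\[
\sigma_M:=\inf\{t\ge0:V(Z_t)\ge M\}.
\]
The sublevel set \(\{V\le M\}\) is a compact subset of \(E^\circ\). Hence it lies in some square \([\varepsilon,N]^2\). Before \(\sigma_M\), the process sits in such a square, where the coefficients are bounded. So \(\sigma_M<\tau_0\wedge\tau_\infty\) whenever \(\sigma_M<\infty\), and \(\sigma_M\uparrow\tau_0\wedge\tau_\infty\) as \(M\to\infty\).

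Because \(W\ge0\), the drift condition \eqref{eq:interior-lyapunov-drift} gives \(\cL V\le C\). The localized inequality \eqref{eq:localized-dynkin-inequality} applied to \(V\ge0\) then yields
\[
\E V(Z_{t\wedge\sigma_M})\le V(x_0,y_0)+Ct .
\]
The integrability of the jump parts of \(\cL V\) needs a brief check. The nonlocal terms are finite because \(G\) is \(C^2\), nonnegative and nonincreasing, so jumps only decrease \(G\); the linear part \(S\) is handled exactly as in \Cref{lem:imm-lyapunov}.

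Markov's inequality gives \(\Pp(\sigma_M\le t)\le (V(x_0,y_0)+Ct)/M\). Letting \(M\to\infty\) gives \(\Pp(\tau_0\wedge\tau_\infty\le t)=0\) for every \(t\). Thus \(Z\) is global and never leaves \(E^\circ\), so the CWP coincides with the solution of \eqref{eq:model}.

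\textbf{Step 2: occupation bound and tightness.} Again by \eqref{eq:localized-dynkin-inequality}, now keeping the \(W\) term,
\[
c\,\E\int_0^{t\wedge\sigma_M}W(Z_s)\dd s\le V(x_0,y_0)+Ct .
\]
Monotone convergence as \(M\to\infty\) gives
\[
\int W\dd\nu_T^{z}\le \frac{V(z)}{cT}+\frac Cc,\qquad z=(x_0,y_0).
\]
The sets \(\{W\le R\}\) are compact in \(E^\circ\). Markov's inequality therefore gives tightness of \(\{\nu_T^z:T\ge1\}\) on \(E^\circ\), the state space of \Cref{lem:time-average-limits} with \(D=E^\circ\). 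For any weak subsequential limit \(\pi\), \(W\) is continuous and nonnegative, and \(W\wedge R\) is bounded continuous on \(E^\circ\). Passing to the limit and then letting \(R\to\infty\) gives \(\int W\dd\pi\le C/c\) and \(\pi(E^\circ)=1\).

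\textbf{Step 3: weak Feller property on \(E^\circ\) (main obstacle).} Let \(f\in C_b(E^\circ)\), \(t>0\), and \(z_n\to z\) in \(E^\circ\). Choose a compact set \(K\ni z,z_n\) in \(E^\circ\). Take a smooth \(D_k\Subset E^\circ\) containing \(\{V\le k\}\), and modify the coefficients outside \(D_k\) so that they are Lipschitz and globally bounded (for the jump intensities, Lipschitz in the sense of the Dawson--Li/Fu--Li framework). The modified equation then has pathwise unique strong solutions that are continuous in probability with respect to the initial condition, via standard \(L^1\)/Yamada--Watanabe-type stability estimates. This is the localization in \Cref{prop:axis-realization}.

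The true and modified solutions coincide up to the exit time from \(D_k\). The Step 1 bound is uniform on \(K\):
\[
\sup_{w\in K}\Pp_w(\sigma_k\le t)\le\frac{\sup_K V+Ct}{k}.
\]
Hence
\[
|P_tf(z_n)-P_tf(z)|\le |\tilde P^{(k)}_tf(z_n)-\tilde P^{(k)}_tf(z)|+\frac{4\|f\|_\infty(\sup_KV+Ct)}{k}.
\]
Take \(n\to\infty\), then \(k\to\infty\).

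The delicate point is the continuity of the localized flow in the initial point when the jump coefficients are non-Lipschitz powers. I would try first to use the monotone coupling and \(L^1\)-contraction estimate for the truncated equations, which works because each coordinate's noise depends only on that coordinate. Interior boundedness away from zero makes the power coefficients locally Lipschitz, which is what makes this feasible.

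\textbf{Step 4: conclusion.} With the process conservative on \(E^\circ\) (Step 1) and weak Feller (Step 3), \Cref{lem:time-average-limits} with \(D=E^\circ\) shows that every subsequential limit of \(\nu_T^z\) is invariant. Together with the bounds from Step 2, this proves the lemma.
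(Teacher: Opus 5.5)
Your proposal is correct and follows essentially the same route as the paper's proof. Both arguments use Lyapunov stopping with Markov's inequality to show the process is global and stays in $E^\circ$, and the occupation bound to get tightness and $\int W\dd\pi\le C/c$. Both then get the weak Feller property from a synchronous coupling, the stopped $L^1$ estimate and the uniform exit bound, and finish with \Cref{lem:time-average-limits} for $D=E^\circ$.

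The only cosmetic difference is that the paper stops the coupled pair directly when either solution leaves $(n^{-1},n)^2$, rather than passing through a modified equation. Doing it the paper's way also avoids having to first extend $f\in C_b(E^\circ)$ (e.g.\ by Tietze from $\overline{D_k}$) before writing $\tilde P^{(k)}_t f$.
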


\begin{proof}
Fix \((x_0,y_0)\in E^\circ\), and, for all sufficiently large
\(n\), set
\[
 \tau_n=\inf\{t\ge0:Z_t\notin(n^{-1},n)^2\},
 \qquad
 v_n=\inf_{(x,y)\in E^\circ\setminus(n^{-1},n)^2}V(x,y).
\]
For every \(M>0\), compactness places \(\{V\le M\}\) inside
\((n^{-1},n)^2\) for all sufficiently large \(n\); hence
\(v_n\ge M\) eventually and \(v_n\to\infty\) as $n \rightarrow \infty$.
Before \(\tau_n\), the Brownian and small-jump integrands have
bounded quadratic norms. For a positive jump of size \(z\)
in coordinate \(i\), nonnegativity and monotonicity of \(G\)
give
\[
 |V(x+z,y)-V(x,y)|\le z+G(x,y),\qquad
 |V(x,y+z)-V(x,y)|\le c_1z+G(x,y).
\]
Thus the large-jump increments and their compensators are
integrable before \(\tau_n\), since \(G\) and the jump rates
are bounded on \([n^{-1},n]^2\) and
\(\int_1^\infty z\,\mu_i(\dd z)<\infty\).
By \eqref{eq:interior-lyapunov-drift} and
\eqref{eq:localized-dynkin-inequality}, we obtain
\[
 \E_{x_0,y_0}V(Z_{t\wedge\tau_n})
 +c\E_{x_0,y_0}\int_0^{t\wedge\tau_n}W(Z_s)\dd s
 \le V(x_0,y_0)+Ct.
\]
Consequently,
\begin{equation}
 \Pp_{x_0,y_0}(\tau_n\le t)
 \le\frac{V(x_0,y_0)+Ct}{v_n}\rightarrow0
\label{eq:interior-exit-bound}
\end{equation}
as $n \rightarrow \infty$. Thus the CWP \(Z\) is global and remains in
	\(E^\circ\) almost surely, proving the first assertion. Hence by $\tau_n \uparrow \infty$ as $n \rightarrow \infty$ and Fatou's lemma, it follows that
\begin{equation*}
 \E_{x_0,y_0}V(Z_t)
 +c\E_{x_0,y_0}\int_0^tW(Z_s)\dd s
 \le V(x_0,y_0)+Ct.
\end{equation*}
In particular, the time-averaged law
\(\Pi_T^{(x_0,y_0)}\) satisfies
\begin{equation}
 \int_{E^\circ}W(x, y)\Pi_T^{(x_0,y_0)}(\dd x, \dd y)
 \le\frac{V(x_0,y_0)}{cT}+\frac Cc.
\label{eq:interior-time-average-bound}
\end{equation}
The compact sublevel sets of \(W\) give tightness
of \(\{\Pi_T^{(x_0,y_0)}:T\ge1\}\) on \(E^\circ\).
To prove the weak Feller property, let \(z_k\to z\) in
\(E^\circ\) and couple the CWP \(Z^{(k)}\) and the CWP \(Z\) with the same driving noises. Stop both when
either leaves \((n^{-1},n)^2\), at a time denoted by
\(\sigma_n^{(k)}\). The argument yielding
\eqref{eq:stopped-L1-continuity}, with additional stopping
when either solution reaches the lower boundary of
\((n^{-1},n)^2\), gives
\beqlb\label{eq4} 
 \E\bigl|Z^{(k)}_{t\wedge\sigma_n^{(k)}}
          -Z_{t\wedge\sigma_n^{(k)}}\bigr|_1
 \le e^{C_nt}|z_k-z|_1,
\eeqlb
where \(|(x,y)|_1:=|x|+|y|\).
For \(K=\{z,z_k:k\ge1\}\), \eqref{eq:interior-exit-bound} gives
\[
 \Pp(\sigma_n^{(k)}\le t)
 \le \frac{2\bigl(\sup_{w\in K}V(w)+Ct\bigr)}{v_n}.
\]
Together with \eqref{eq4} and
uniform continuity of $f$ on \([n^{-1},n]^2\), it implies
\(P_tf(z_k)\to P_tf(z)\) for \(f\in C_b(E^\circ)\),
first letting \(k\to\infty\) and then \(n\to\infty\).
Applying \Cref{lem:time-average-limits} with
\(D=E^\circ\) shows that every weak limit is invariant for
the common interior process, and hence for both boundary realizations.
As a probability on \(E^\circ\), its extension to \(E\)
satisfies \(\pi(E^\circ)=1\).
Taking \(T=T_k\to\infty\) along the convergent subsequence
in \eqref{eq:interior-time-average-bound} gives
\[
 \int_{E^\circ}W\,\dd\pi
 \le\liminf_{k\to\infty}
 \int_{E^\circ}W\,\dd\Pi_{T_k}^{(x_0,y_0)}
 \le C/c.
\]
The result follows.
\end{proof}

\begin{lemma}
\label{lem:cooperative-critical-extinction}
{Assume \eqref{eq:cooperative-regularity} and
\Cref{ass:cooperative-dissipation}, with
\(\iota_1=\iota_2=0\).}
If \eqref{eq:cooperative-critical-powers} holds, then, for every
\((x_0,y_0)\in E\), $
 Z_t\rightarrow(0,0)$ almost surely and in $L^1$ as 
$t\to\infty.$
\end{lemma}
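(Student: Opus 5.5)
The plan is to use the weighted linear function $S(x,y)=x+c_1y$ from \eqref{SVW}, with $c_1$ chosen as in the last case of the proof of \Cref{lem:parameter-drift}. I would show that $S(Z_t)$ is a nonnegative supermartingale whose drift is bounded by $-cW_0$, and convert this into $L^1$ and almost sure convergence.

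\emph{Step 1 (drift bound).} Under \eqref{eq:cooperative-critical-powers} we are in the case $p+q=r+s=1$ of that proof, where \eqref{eq:cooperative-critical} is used. Since $\iota_1=\iota_2=0$, it gives
\[
\cL S\le -c\,W_0=-c\bigl(x^{r_{10}}+y^{r_{20}}\bigr)
\]
on $E^\circ$ with $C_0=0$. On the open axes the CWP dynamics give $\cL S(x,0)=-b_{10}x^{r_{10}}$ and $\cL S(0,y)=-c_1b_{20}y^{r_{20}}$, so the same bound holds on all of $E$. \Cref{lem:imm-lyapunov} applies and shows that the CWP is conservative. By \eqref{Vito}, localized at $\tau_N^{V_0}$, and the strong Markov property at $\tau_0$ (as in \Cref{lem:cw-occupation}), we get
\[
\E_{x_0,y_0}S(Z_t)+c\,\E_{x_0,y_0}\int_s^t W_0(Z_u)\dd u\le \E_{x_0,y_0}S(Z_s),\qquad 0\le s\le t.
\]
Moreover $S(Z_t)$ equals a nonincreasing process plus a local martingale and is nonnegative. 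By Fatou's lemma along the localizing sequence, $S(Z_t)$ is therefore a nonnegative supermartingale.

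\emph{Step 2 ($L^1$ convergence).} Set $m(t):=\E_{x_0,y_0}S(Z_t)$. Then $m$ is nonincreasing and bounded by $m(0)=S(x_0,y_0)$. Put $\rho:=r_{10}\vee r_{20}>1$. Comparing the cases $x\le1$ and $x\ge1$ gives $x^{r_{10}}\ge \min(x,x^\rho)$, and similarly for $y$. Hence there is a constant $c'>0$ with $W_0\ge c'g(S)$, where $g$ is a convex, increasing minorant of $s\mapsto\min(s,s^\rho)$ on $[0,\infty)$ with $g(s)>0$ for $s>0$. One choice is $s^\rho$ on $[0,s_*]$ continued tangentially, i.e.\ linearly, with $s_*$ small enough that the tangent line stays below $s$. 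Jensen's inequality then yields
\[
m(t)-m(s)\le -cc'\int_s^t g(m(u))\dd u .
\]
Because $m$ is monotone and $\int_0^\infty g(m(u))\dd u\le m(0)/(cc')<\infty$, we must have $g(m(t))\to0$, and so $m(t)\to0$. This is $L^1$ convergence of $Z_t$ to $(0,0)$, since $|Z_t|_1\le S(Z_t)/(1\wedge c_1)$.

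\emph{Step 3 (almost sure convergence).} A nonnegative càdlàg supermartingale converges almost surely to some limit $S_\infty\ge0$. Fatou's lemma gives $\E S_\infty\le\lim_t m(t)=0$, so $S_\infty=0$ almost surely, and hence $Z_t\to(0,0)$ almost surely. Boundary initial states are covered by the same argument applied to the axis dynamics, and the origin is trivial.

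The main obstacle is Step 1: justifying the supermartingale property across the boundary hitting time for the CWP, and handling the non-integrable large jumps in $M$. I would do this by localizing at $\tau_N^{V_0}$, applying the strong Markov property at $\tau_0$ using the axis realization of \Cref{prop:axis-realization}, and relying on nonnegativity plus Fatou's lemma to avoid any uniform integrability requirement.
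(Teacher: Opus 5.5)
Your proof is correct and follows the paper's argument. The common skeleton is: $S(Z_t)$ is a nonnegative supermartingale with drift at most $-cW_0$ (using $C_0=0$ in the critical case of \Cref{lem:parameter-drift}); $t\mapsto\E_{x_0,y_0}S(Z_t)$ is nonincreasing with $\int_0^\infty\E_{x_0,y_0}W_0(Z_u)\dd u<\infty$; and convexity turns this into $\E_{x_0,y_0}S(Z_t)\to0$. The differences are cosmetic: the paper applies H\"older coordinatewise along a sequence $t_n$ with $\E_{x_0,y_0}W_0(Z_{t_n})\to0$ instead of building your convex minorant $g$, identifies $S_\infty=0$ from $\int_0^\infty W_0(Z_s)\dd s<\infty$ a.s.\ rather than by Fatou, and needs no strong Markov step at $\tau_0$ because \eqref{Vito} already holds for the global nonnegative solution.
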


\begin{proof}
The assumptions imply \(r_{10},r_{20}>1\), and hence
	\eqref{eq:imm-regularity} and
	\Cref{ass:weighted-dissipation}\textup{(iii)} hold.
	By \Cref{lem:imm-lyapunov}, the CWP \(Z\) is conservative. By \Cref{lem:parameter-drift}, choose \(c_1,c>0\)
so that \eqref{eq:weighted-dissipation} holds with \(C_0=0\).
Fix \((x_0,y_0)\in E\), and set
\(S_t:=S(Z_t)\), with \(S\) defined in \eqref{SVW}.
Since \(S=V_0-1\), \eqref{Vito} yields
\[
 S_t=S(x_0,y_0)+\int_0^t\cL S(Z_s)\,\dd s+M_t \quad \text{and} 
 \quad \cL S\le-cW_0\le0.
\]
Thus \((S_t)_{t\ge0}\) is a nonnegative supermartingale. As in the proof of \Cref{lem:imm-lyapunov},
stopping at \(\tau_N^+\) and letting \(N\to\infty\) give
\[
 \E_{x_0,y_0}S_t
 +c\E_{x_0,y_0}\int_0^t
   W_0(Z_s)\dd s
 \le S(x_0,y_0).
\]
Consequently, $ S_t\rightarrow S_\infty<\infty$ almost surely, and 
\begin{equation}
 \E_{x_0,y_0}\int_0^\infty
   W_0(Z_s)\dd s
 \le\frac{S(x_0,y_0)}{c}.
\label{eq:critical-extinction-integral}
\end{equation}
For every \(\varepsilon>0\),
\[
 \inf_{\substack{(x,y)\in E\\ S(x,y)\ge\varepsilon}}
       W_0(x,y)>0.
\]
Together with \eqref{eq:critical-extinction-integral},
this implies \(S_\infty=0\) almost surely and yields a sequence
\(t_n\uparrow\infty\) such that $\E_{x_0,y_0}
 W_0(Z_{t_n})
 \rightarrow0.$
By H\"older's inequality, 
\[
 \E_{x_0,y_0}S_{t_n}
 \le\bigl(\E_{x_0,y_0}(X_{t_n})^{r_{10}}\bigr)^{1/r_{10}}
   +c_1\bigl(\E_{x_0,y_0}(Y_{t_n})^{r_{20}}\bigr)^{1/r_{20}}
 \rightarrow0
\]
as $n \rightarrow \infty$.
Since \(t\mapsto\E_{x_0,y_0}S_t\) is nonincreasing,
\(\E_{x_0,y_0}S_t\to0\) as \(t\to\infty\). Together with \(S_t\to0\) almost
surely and \(x+y\le\max\{1,c_1^{-1}\}S(x,y)\), we obtain that $
 Z_t\rightarrow(0,0)$ almost surely and in $L^1$ as $t \rightarrow \infty$.
\end{proof}

\begin{proof}[Proof of \Cref{thm:cooperative-noimm-boundary}]
By \eqref{eq:cooperative-regularity} and
	\eqref{eq:cooperative-powers}, we have \(r_{i0}>1\)
	for \(i=1,2\). Hence \eqref{eq:imm-regularity} and
	\Cref{ass:weighted-dissipation}\textup{(iii)} hold.
	By \Cref{lem:imm-lyapunov}, the CWP \(Z\) is conservative. The SSP \(Z^{\tau_0}\)
is then conservative as well.
By \Cref{def:coordinatewise,def:absorbed}, \(\partial E\) is invariant
for both processes. We prove \(\pi(\partial E)=1\) in the three cases below.

Suppose first that \textup{(i)} holds. By
\Cref{lem:cooperative-critical-extinction}, the CWP converges to \((0,0)\) almost surely and in \(L^1\). For
\(f(x,y):=xy/[(1+x)(1+y)]\), by the convergence and the
definition of the SSP, as $t \rightarrow \infty$, we have
\[
 P_tf(x,y)\rightarrow0,
 \qquad P_t^{\tau_0}f(x,y)\rightarrow0,
 \qquad (x,y)\in E.
\]
By invariance and bounded convergence, every invariant probability
for either realization satisfies \(\pi(f)=0\), and hence
\(\pi(E^\circ)=0\).

Suppose next that \textup{(ii)} holds.
Since \(\theta_i\ge1\), \eqref{eq:cooperative-accessible-noise}
and \cite[Theorem~1.7]{XiongYangZhouEnhancing} give $
 \Pp_{x,y}(\tau_0<\infty)>0$ for $(x,y)\in E^\circ.$
The solution in \cite[Theorem~1.7]{XiongYangZhouEnhancing}, stopped at \(\tau_0\), agrees
with both realizations up to that time. Since neither process
can return to \(E^\circ\) after reaching \(\partial E\),
\[
 P_t\1_{E^\circ}(x,y)
 =P_t^{\tau_0}\1_{E^\circ}(x,y)
 =\1_{E^\circ}(x,y)\Pp_{x,y}(\tau_0>t).
\]
Thus every invariant probability \(\pi\) satisfies, for \(n\ge1\),
\[
 \pi(E^\circ)
 =\int_{E^\circ}\Pp_{x,y}(\tau_0>n)\,\pi(\dd x,\dd y),
 \qquad
 \int_{E^\circ}\Pp_{x,y}(\tau_0\le n)\,\pi(\dd x,\dd y)=0.
\]
Letting \(n\to\infty\),
\[
 \int_{E^\circ}\Pp_{x,y}(\tau_0<\infty)\,\pi(\dd x,\dd y)=0,
\]
and hence \(\pi(E^\circ)=0\).

Suppose now that \textup{(iii)} holds. Choose \(c_1,c,C_0\)
as in \Cref{lem:parameter-drift}, set $h=(\theta_1+\kappa_1)\wedge(\theta_2+\kappa_2)$ and 
\begin{equation*}
\beta_i:=\min\bigl\{r_{i1}-2\ \text{if }b_{i1}>0,\;
             r_{i2}-\alpha_i\ \text{if }b_{i2}>0\bigr\},
 \quad i=1,2,\qquad \beta:=\beta_1\vee\beta_2.
\end{equation*}
Fix \(p\in(0,1)\), and set
\begin{align*}
 H_p(x,y):={}&p(1-p)S^{p-2}
       \bigl(b_{11}x^{r_{11}}+c_1^2b_{21}y^{r_{21}}\bigr)\\
 &+\frac{p(1-p)\Gamma(\alpha_1-p)}
        {\Gamma(\alpha_1)\Gamma(2-p)}
       b_{12}x^{r_{12}}S^{p-\alpha_1}
 +\frac{p(1-p)\Gamma(\alpha_2-p)}
        {\Gamma(\alpha_2)\Gamma(2-p)}
       c_1^{\alpha_2}b_{22}y^{r_{22}}S^{p-\alpha_2},
\end{align*}
where $S$ is the function defined by \eqref{SVW}. By \eqref{eq:generator} and
\Cref{lem:cooperative-power-calculus},
\[
 \cL S^p=pS^{p-1}\cL S-H_p(x,y).
\]
 Since \(x\le S\) and \(y\le S/c_1\),
\(\cL S\le CS^h\) for \(0<S\le1\).
The definition of \(\beta_i\) gives constants \(d_1,d_2>0\) such that
\begin{align*}
 H_p(x,y)
 \ge d_1S^{p+\beta_1}\1_{\{x\ge S/2\}}
       +d_2S^{p+\beta_2}\1_{\{c_1y\ge S/2\}}\ge (d_1\wedge d_2)S^{p+\beta},
 \qquad 0<S\le1.
\end{align*}
 Thus, for \(c_*:=d_1\wedge d_2>0\),
\[
 \cL S^p\le CS^{p+h-1}-c_*S^{p+\beta},
 \qquad 0<S\le1.
\]
Since \(\beta<h-1\), choose \(0<\varepsilon\le1\) with
\(C\varepsilon^{h-1-\beta}<c_*\). Then
\(\cL S^p<0\) for \(0<S\le\varepsilon\).
For \(\lambda>0\), let $f_\lambda(s):=1-e^{-\lambda s^p}$ for $s \ge 0$. By \eqref{eq:generator}, one can check that
\[
 \cL f_\lambda(S)
 \le\lambda e^{-\lambda S^p}\cL S^p<0,
 \qquad 0<S\le\varepsilon.
\]
By \eqref{eq:weighted-dissipation}, \(\cL S<0\) for
\(S\ge R\), with some \(R>1\). Since \(f_\lambda\) is increasing and concave,
\[
 \cL f_\lambda(S)
 \le f_\lambda'(S)\cL S<0,\qquad S\ge R.
\]
It remains to consider \(\varepsilon\le S\le R\).
Notice that
\[
 \inf_{\substack{(x,y)\in E\\\varepsilon\le{S(x,y)}\le R}}
 \bigl(b_{11}x^{r_{11}}+c_1^2b_{21}y^{r_{21}}
       +b_{12}x^{r_{12}}+c_1^{\alpha_2}b_{22}y^{r_{22}}\bigr)>0.
\]
Set \(\delta:=p\min_{\varepsilon\le s\le R}s^{p-1}>0\).
Then $
 f_\lambda''(s)/f_\lambda'(s)
 =(p-1)/s-\lambda ps^{p-1}\le-\lambda \delta.$
Moreover,
\[
 0\le\frac{f_\lambda(s+z)-f_\lambda(s)}{f_\lambda'(s)}
 =\frac{1-e^{-\lambda[(s+z)^p-s^p]}}
 {\lambda ps^{p-1}}
 \le\frac1{\lambda \delta},
 \qquad \varepsilon\le s\le R,\quad z>0.
\]
Hence, for \(\lambda \delta>2\), it follows that
\begin{align*}
  \frac1{f_\lambda'(s)}
 \int_0^\infty\bigl[f_\lambda(s+z)-f_\lambda(s)
                    -zf_\lambda'(s)\bigr]\mu_i(\dd z) \le-\frac12\int_{2/(\lambda \delta)}^1z\,\mu_i(\dd z)
 \le-d_i\lambda^{\alpha_i-1}+C_i.
\end{align*}
Then one can check that $\cL f_\lambda(S) < 0$ for $\varepsilon\le S\le R$, and then $\cL f_\lambda <0$ on $E^\circ$ for large enough $\lambda$.

Let \((Q_t)_{t\ge0}\) be either \((P_t)_{t\ge0}\) or
\((P_t^{\tau_0})_{t\ge0}\), and let \(\pi\) be invariant
for \(Q_t\). Suppose that \(\pi(E^\circ)>0\).
Since \(\partial E\) is absorbing, stationarity gives 
\[
 \pi(\partial E)
 =\pi(\partial E)+\int_{E^\circ}Q_t(z,\partial E)\,\pi(\dd z).
\]
It follows that
\[
 \int_{E^\circ}Q_t(z,\partial E)\,\pi(\dd z)=0,
 \qquad
 \pi(A)=\int_{E^\circ}Q_t(z,A)\,\pi(\dd z),
 \quad A\subset E^\circ.
\]
Consequently, \(\pi_\circ(\cdot):=\pi(\cdot\cap E^\circ)/\pi(E^\circ)\)
is invariant for the process killed upon leaving \(E^\circ\). Started from
\(\pi_\circ\), the process remains in \(E^\circ\): indeed,
\(\Pp_{\pi_\circ}(\tau_0\le n)=0\) for every integer \(n\ge1\).
By \eqref{eq:localized-dynkin-inequality} and Fatou's lemma,
using \(0\le f_\lambda\le1\), for \(t>0\), we have
\[
\E_{\pi_\circ}f_\lambda(S(Z_t)) < \E_{\pi_\circ}f_\lambda(S(Z_t))
 +\E_{\pi_\circ}\int_0^t[-\cL(f_\lambda\circ S)](Z_s)\dd s
 \le\E_{\pi_\circ}f_\lambda(S(Z_0)),
\]
which is a contradiction.
Thus \(\pi(E^\circ)=0\), or equivalently, \(\pi(\partial E)=1\), in all three cases.
By \Cref{def:absorbed}, every probability concentrated on
\(\partial E\) is invariant for the SSP.
For the CWP, similar to the axis argument in the proof of
\Cref{prop:noimm-classification}\textup{(ii)}, with
\(b_{10},b_{20}>0\), it gives \(\pi=\delta_{(0,0)}\).
\end{proof}

\begin{proof}[Proof of \Cref{thm:cooperative-interior}]
For \(p,q>0\), consider \(\chi(x)x^{-p}\) and \(\chi(y)y^{-q}\), where $\chi(\cdot)$ is defined in the beginning of \Cref{sec:cooperative-noimm-proofs}.
Both functions are nonnegative and nonincreasing in each coordinate.
For \(0<x\le1\), \(\chi(x)x^{-p}=x^{-p}\) and
\(\chi(x+z)(x+z)^{-p}\le(x+z)^{-p}\).
On \(1\le x\le2\), the derivatives and L\'evy integral of
\(\chi(x)x^{-p}\) are bounded; for \(x\ge2\), they vanish.
The second coordinate is treated in the same way.
Thus by \Cref{lem:cooperative-power-calculus},
\begin{equation}
\begin{aligned}
 \cL\bigl(\chi(x)x^{-p}\bigr)\le{}&C+\1_{\{x\le1\}}p x^{-p}
 \biggl(b_{10}x^{r_{10}-1}+(p+1)b_{11}x^{r_{11}-2}\\
 &\hspace{3em}+\frac{\Gamma(p+\alpha_1)}
 {\Gamma(p+1)\Gamma(\alpha_1)}b_{12}x^{r_{12}-\alpha_1}
 -a_1x^{\theta_1-1}y^{\kappa_1}\biggr),\\
 \cL\bigl(\chi(y)y^{-q}\bigr)\le{}&C+\1_{\{y\le1\}}q y^{-q}
 \biggl(b_{20}y^{r_{20}-1}+(q+1)b_{21}y^{r_{21}-2}\\
 &\hspace{3em}+\frac{\Gamma(q+\alpha_2)}
 {\Gamma(q+1)\Gamma(\alpha_2)}b_{22}y^{r_{22}-\alpha_2}
 -a_2y^{\theta_2-1}x^{\kappa_2}\biggr)
\end{aligned}
\label{eq:cooperative-cutoff-generator}
\end{equation}
for some constant \(C>0\), uniform in \((x,y)\in E^\circ\).
The interaction terms omitted on the cutoff intervals are
nonpositive.

Suppose first that \textup{(i)} holds. Choose
\[
 \frac{\kappa_2}{r_2-\theta_2+1}<\rho<\frac{r_1-\theta_1+1}{\kappa_1},
 \qquad p=q\rho,
\]
where \(q>0\) will be taken sufficiently large. Put
\[
 A=p+1-\theta_1,\qquad B=q+1-\theta_2,
 \qquad
 \eta=\frac{AB-\kappa_1\kappa_2}{B+\kappa_1},\qquad
 \zeta=\frac{AB-\kappa_1\kappa_2}{A+\kappa_2}.
\]
For large \(q\), the above four quantities are positive. Moreover,
as \(q\to\infty\),
\begin{align*}
 p-\eta
 &=\theta_1-1+\kappa_1\frac{A+\kappa_2}{B+\kappa_1}
 \rightarrow\theta_1-1+\kappa_1\rho<r_1,\\
 q-\zeta
 &=\theta_2-1+\kappa_2\frac{B+\kappa_1}{A+\kappa_2}
 \rightarrow\theta_2-1+\frac{\kappa_2}{\rho}<r_2.
\end{align*}
Choose \(q\) large enough such that \(p-\eta<r_1\) and
\(q-\zeta<r_2\). By Young's inequality,
\[ {
	\frac{B}{B+\kappa_1}x^{-A}y^{\kappa_1}
	+\frac{\kappa_1}{B+\kappa_1}x^{\kappa_2}y^{-B}
	 \ge
	\bigl(x^{-A}y^{\kappa_1}\bigr)^{B/(B+\kappa_1)}
	\bigl(x^{\kappa_2}y^{-B}\bigr)^{\kappa_1/(B+\kappa_1)} 
	=x^{-\eta} }
 \]
and 
\beqnn
 \frac{\kappa_2}{A+\kappa_2}x^{-A}y^{\kappa_1}
 +\frac{A}{A+\kappa_2}x^{\kappa_2}y^{-B}
  \ge y^{-\zeta}.
\eeqnn
Consequently,
\[
 x^{-A}y^{\kappa_1}+x^{\kappa_2}y^{-B}
 \ge x^{-\eta}\vee y^{-\zeta}
 \ge\frac12(x^{-\eta}+y^{-\zeta}).
\]
Since \(A>\eta\) and \(B>\zeta\), separating the regions
\(x,y\le1\), \(x\le1<y\), \(y\le1<x\), and \(x,y>1\), we have
\begin{equation}
 \1_{\{x\le1\}}x^{-A}y^{\kappa_1}
 +\1_{\{y\le1\}}x^{\kappa_2}y^{-B}
 \ge\frac12(x^{-\eta}+y^{-\zeta})-1.
 \label{eq:cooperative-cutoff-coercivity}
\end{equation}
For \(\beta_1\ge r_1-p>-\eta\),
\(\beta_2\ge r_2-q>-\zeta\), and every \(\varepsilon>0\),
there is a constant \(C_\varepsilon>0\) such that
\begin{equation}
 \1_{\{x\le1\}}x^{\beta_1}
 \le\varepsilon x^{-\eta}+C_\varepsilon,
 \qquad
 \1_{\{y\le1\}}y^{\beta_2}
 \le\varepsilon y^{-\zeta}+C_\varepsilon.
 \label{eq:cooperative-power-absorption}
\end{equation}
 Combining
\eqref{eq:cooperative-cutoff-generator},
\eqref{eq:cooperative-cutoff-coercivity}, and
\eqref{eq:cooperative-power-absorption}, and choosing \(\varepsilon\) sufficiently small shows that
\begin{equation}
\cL\bigl(\chi(x)x^{-p}\bigr)+\cL\bigl(\chi(y)y^{-q}\bigr)
 \le C-\frac14((pa_1)\wedge (qa_2))(x^{-\eta}+y^{-\zeta})
 \qquad\text{on }E^\circ.
 \label{eq:cooperative-subcritical-drift}
\end{equation}
In this case, set \(\Psi(x,y)=\chi(x)x^{-p}+\chi(y)y^{-q}\).

Suppose now that \textup{(ii)} holds. For \(v>0\), let
\[
\begin{aligned}
 C_i(v):={}&b_{i0}\1_{\{r_{i0}-1=r_i\}}
 +(v+1)b_{i1}\1_{\{r_{i1}-2=r_i\}} +\frac{\Gamma(v+\alpha_i)}
 {\Gamma(v+1)\Gamma(\alpha_i)}
 b_{i2}\1_{\{r_{i2}-\alpha_i=r_i\}},
 \qquad i=1,2.
\end{aligned}
\]
Then \(C_i(r_i)=b_i\). Set \(\sigma=\kappa_2/(r_1-\theta_1+1)\).
There is a
constant \(R_0>0\) such that
\[
 \frac{b_1}{a_1}<R_0<
 \left(\frac{a_2}{b_2}\right)^{1/\sigma}.
\]
Choose \(\lambda_1,\lambda_2>0\) so that
\[
 \lambda_1r_1a_1
 =\sigma\lambda_2r_2a_2R_0^{-\sigma-1}.
\]
The function
\(R\mapsto\lambda_1r_1a_1R+
\lambda_2r_2a_2R^{-\sigma}\) attains its minimum at \(R_0\).
Consequently, for some \(c_0>0\),
\[
 \lambda_1r_1({b_1}-a_1R)
 +\lambda_2r_2({b_2}-a_2R^{-\sigma})\le-c_0,
 \qquad R>0.
\]
For \(\varepsilon\ge0\), let
\beqlb\label{eqpq} 
 p=r_1+\frac{r_1-\theta_1+1}{\kappa_1}\varepsilon,
 \qquad q=r_2+\varepsilon,
\eeqlb
and consider
\beqnn
 \frac{\lambda_1p[C_1(p)-a_1R]
 +\lambda_2qR^{-\varepsilon/\kappa_1}
       [C_2(q)-a_2R^{-\sigma}]}
 {1+R^{-\varepsilon/\kappa_1}}.
 \eeqnn
This quotient tends to \(-\infty\) at both endpoints of
\((0,\infty)\), uniformly for \(0\le\varepsilon\le1\),
and is at most \(-c_0/2\) when \(\varepsilon=0\).
Uniform continuity on compact intervals therefore gives
a sufficiently small \(\varepsilon\in(0,1)\) and
\(c_2>0\) such that
\begin{equation}
\begin{aligned}
\lambda_1p[C_1(p)-a_1R]
 +\lambda_2qR^{-\varepsilon/\kappa_1}
       [C_2(q)-a_2R^{-\sigma}] \le-c_2(1+R^{-\varepsilon/\kappa_1}),
 \qquad R>0.
\end{aligned}
\label{eq:cooperative-critical-scalar}
\end{equation}
Set
\[
 \eta=\frac{r_1-\theta_1+1}{\kappa_1}\varepsilon,
 \qquad
 \Psi(x,y)=\lambda_1\chi(x)x^{-p}+\lambda_2\chi(y)y^{-q}.
\]
With \(R=y^{\kappa_1}/x^{r_1-\theta_1+1}\), by the assumption
\((r_1-\theta_1+1)(r_2-\theta_2+1)=\kappa_1\kappa_2\), we have
\[
 y^{-\varepsilon}=x^{-\eta}R^{-\varepsilon/\kappa_1},
 \qquad
 \frac{x^{\kappa_2}}{y^{r_2-\theta_2+1}}=R^{-\sigma}.
\]
For \(0<x,y\le1\), \eqref{eq:cooperative-cutoff-generator}
and \eqref{eqpq} yield
\begin{align*}
 \cL\Psi(x,y)\le{}&C+
 \lambda_1p x^{-\eta}[C_1(p)-a_1R+o(1)]+\lambda_2q y^{-\varepsilon}[C_2(q)-a_2R^{-\sigma}+o(1)]\\
 \le{}&C-c_2(x^{-\eta}+y^{-\varepsilon})
       +o(1)x^{-\eta}+o(1)y^{-\varepsilon},
\end{align*}
where the second inequality uses
\eqref{eq:cooperative-critical-scalar}.
Hence, for some \(r\in(0,1)\),
\[
 \cL\Psi(x,y)\le C-\frac{c_2}{2}
 (x^{-\eta}+y^{-\varepsilon}),\qquad 0<x,y\le r.
\]
If \(x\le r<y\), \eqref{eq:cooperative-cutoff-generator}
instead gives, for suitable constants \(C,c_3,c_4>0\),
\[
 \cL\Psi(x,y)
 \le-c_3x^{-\eta-r_1+\theta_1-1}+Cx^{-\eta}+C
 \le C-c_4x^{-\eta}.
\]
Interchanging the
coordinates treats \(y\le r<x\). On \(x,y\ge r\), the
positive terms and the two negative powers on the right hands of \eqref{eq:cooperative-cutoff-generator} are bounded.
Thus, after adjusting the constants,
\begin{equation}
 \cL\Psi(x,y)\le C-c_5(x^{-\eta}+y^{-\varepsilon})
 \qquad\text{on }E^\circ.
 \label{eq:cooperative-critical-drift}
\end{equation}

In either case, choose \(c_1,c,C_0\) as in
\Cref{lem:parameter-drift}, and define $
 V(x,y):=S(x,y)+\Psi(x,y).$
By \eqref{eq:weighted-dissipation},
\eqref{eq:cooperative-subcritical-drift}, and
\eqref{eq:cooperative-critical-drift}, there are constants
\(\widetilde C,\widetilde c>0\) such that
\[
 \cL V(x,y)\le
 \begin{cases}
 \widetilde C-\widetilde c\bigl(W_0(x,y)+x^{-\eta}+y^{-\zeta}\bigr),
       &\text{in case \textup{(i)}},\\
 \widetilde C-\widetilde c\bigl(W_0(x,y)+x^{-\eta}+y^{-\varepsilon}\bigr),
       &\text{in case \textup{(ii)}}.
 \end{cases}
\]
The result follows by \Cref{lem:interior-lyapunov}.
\end{proof}

We first establish two lemmas for the critical case \(\beta=1\).

\begin{lemma}\label{lem:sharp-boundary-lyapunov}
	Assume the hypotheses of \Cref{thm:cooperative-sharp} hold with
	\(\beta=1\) and \(\widehat q_1+\widehat q_2>1\).
	Then, for \(S,W_0\) defined in \eqref{SVW} with \(c_1=1\),
	there exists a nonnegative, coordinatewise nonincreasing
	\(F\in C^2(E^\circ)\) such that \(V=S+F\) and \(W=W_0+SF\)
	have compact sublevel sets in \(E^\circ\) and satisfy
	\(\cL V\le C-cW\) for some \(C,c>0\).
\end{lemma}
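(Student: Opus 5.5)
The plan is to build $F$ as a product of negative powers, $F(x,y)=g_1(x)g_2(y)$, with exponents $p=\widehat q_1$ and $q=\widehat q_2$ taken from \eqref{q1q2}. Here $g_1$ is smooth, positive and nonincreasing, with $g_1(x)=x^{-p}$ on $(0,1]$ and $g_1$ constant on $[2,\infty)$; $g_2$ is defined the same way with exponent $q$. For instance, take $g_1(x)=\chi(x)x^{-p}+(1-\chi(x))\,c$ for a suitable constant $c$, with $\chi$ as fixed at the start of \Cref{sec:cooperative-noimm-proofs}, adjusted so that $g_1$ remains monotone. Then $F$ is nonnegative, $C^2$ on $E^\circ$, and nonincreasing in each coordinate. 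Its sublevel sets, together with those of $S$, are compact in $E^\circ$, since $F\to\infty$ as $x\downarrow0$ or $y\downarrow0$ while $S$ stays bounded. The function $W=W_0+SF$ blows up in the same places, so its sublevel sets are compact as well.

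\textbf{Key computation.} Work on $(0,1]^2$, with $\beta=1$, $\theta_i=\kappa_i=1$, $r_{i1}=3$ and $r_{i2}=\alpha_i+1$. For $f=x^{-p}y^{-q}$, \eqref{eq:generator} and \Cref{lem:cooperative-power-calculus} give
\[
 \cL f=f\Bigl[x\Psi_1(p)-pa_1y+y\Psi_2(q)-qa_2x+pb_{10}x^{r_{10}-1}+qb_{20}y^{r_{20}-1}\Bigr].
\]
The three $x$-terms combine into $x\Psi_1(p)$ as follows.
\begin{itemize}
\item The diffusion term contributes $b_{11}p(p+1)x$.
\item The jump term contributes $b_{12}\Gamma(p+\alpha_1)/(\Gamma(p)\Gamma(\alpha_1))\,x$.
\end{itemize}
The $y$-terms combine into $y\Psi_2(q)$ in the same way.

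By \eqref{q1q2}, $\Psi_1(\widehat q_1)=a_2(1-\widehat q_1)$. The hypothesis $\widehat q_1+\widehat q_2>1$ then gives $\Psi_1(p)-qa_2=-a_2(\widehat q_1+\widehat q_2-1)<0$, and symmetrically $\Psi_2(q)-pa_1<0$. Since $r_{i0}>2$, the remaining terms $x^{r_{10}-1}$ and $y^{r_{20}-1}$ are $o(x)$ and $o(y)$. Hence on $(0,r]^2$, for small $r$,
\[
 \cL F\le -c(x+y)F\le -c'SF .
\]
This is exactly the cancellation that the criticality $\beta=1$ is designed to produce: the noise exponent $3=2+1$ makes the noise contribution linear in $x$, matching the bilinear interaction.

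\textbf{Remaining regions.} These are handled separately, and together with the previous step they give $\cL F\le C-cSF$ on $E^\circ$.
\begin{itemize}
\item On $\{x,y\ge r\}$, the function $F$, its derivatives and the Lévy integrals are bounded. The drift coefficients in front of $F_x',F_y'\le0$ grow at most like $W_0$, so this region contributes $C+\varepsilon W_0$ with $\varepsilon$ small. For this I would use the cutoff derivatives and $x^{\theta_1}y^{\kappa_1}\le x^{r_{10}}+y^{r_{20}}+C$, valid because $r_{i0}>2$.
\item On $\{x\le r<y\}$, the product is $F=x^{-p}g_2(y)$. Here $-pa_1y\,x^{-p}g_2(y)$ is strongly negative and dominates $x\Psi_1(p)F$ and the cutoff errors. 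Since $SF\asymp(x+y)x^{-p}g_2(y)$ in this region, the domination gives $-cSF$ up to constants.
\item The region $\{y\le r<x\}$ is symmetric.
\end{itemize}

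\textbf{Conclusion and main obstacle.} Finally, add \Cref{lem:parameter-drift} with $c_1=1$, which gives $\cL S\le C_0-cW_0$; this is admissible because \Cref{ass:cooperative-dissipation} holds with strict inequalities when $\theta_i=\kappa_i=1$ and $r_{i0}>2$. The resulting bound is $\cL V\le C-c(W_0+SF)$. I expect the main obstacle to be the mixed regions $\{x\le r<y\}$ with $y$ large. There the cutoff $g_2$ is nonconstant or constant, and one must check that jump terms of size $z$ in $y$, which shift $g_2$, and the growth $a_2xy$ do not spoil the negative term $-pa_1yF$. What helps is that $g_2$ is nonincreasing, so positive jumps decrease $F$; and any positive error is at most $x\cdot x^{-p}\cdot\mathrm{const}$, which the $y$-term absorbs. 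I would first check that this absorption works with $g_2$ constant beyond $2$, and if not, replace $g_2$ by a slowly decaying tail such as $y^{-q}$ on all of $(0,\infty)$ to keep the product structure exact.
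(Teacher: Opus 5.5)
There is a genuine gap. Your computation on $(0,r]^2$ is correct. The failure is your claim that on $\{x\le r<y\}$ the term $-\widehat q_1a_1yF$ dominates the remaining errors, which is false in general. It fails at \emph{moderate} $y$, not at large $y$; for $y\ge2$ your $g_2$ is constant and there is no problem.

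With $g_2(y)=y^{-\widehat q_2}$ on $(0,1]$, the self-regulation drift contributes $-b_{20}y^{r_{20}}g_2'(y)/g_2(y)=\widehat q_2b_{20}y^{r_{20}-1}$ to $\cL F/F$. On $[1,2]$ the cutoff contributes terms of order one relative to $F$. These contributions are $o(y)$ only as $y\downarrow0$. Near the $y$-axis, the only negative relative term is $-(\widehat q_1a_1-\Psi_2(\widehat q_2))y$. Since $\widehat q_1,\widehat q_2$ do not depend on $b_{10},b_{20}$, the hypotheses of \Cref{thm:cooperative-sharp} allow $b_{20}$ to be arbitrarily large. Your own formula then gives, for fixed $y\in(0,1)$,
\[
 \liminf_{x\downarrow0}\frac{\cL F(x,y)}{F(x,y)}
 \ge[\Psi_2(\widehat q_2)-\widehat q_1a_1]y+\widehat q_2b_{20}y^{r_{20}-1}-K,
\]
where $K$ bounds the correction from $y$-jumps into the cutoff region and does not depend on $b_{20}$. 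For large $b_{20}$ the right-hand side is positive, so $\cL F\ge c\,x^{-\widehat q_1}\to\infty$ as $x\downarrow0$. Meanwhile $\cL S$ stays bounded and $W\ge SF\to\infty$, so $\cL V\le C-cW$ cannot hold. Your fallback $g_2(y)=y^{-\widehat q_2}$ on all of $(0,\infty)$ makes this worse, since $\widehat q_2b_{20}y^{r_{20}-1}$ is then unbounded.

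The missing idea is to damp the power so that every non-noise contribution to $\mathcal A_ig_i/g_i$ is uniformly small relative to $u$. The paper takes $g_i(u)=1+\varepsilon_i\chi(u)u^{-\widehat q_i}$ with $\varepsilon_i>0$ small.
\begin{itemize}
\item On $(0,1]$ the $b_{i0}$-term becomes $\widehat q_ib_{i0}\varepsilon_iu^{r_{i0}-1}/(u^{\widehat q_i}+\varepsilon_i)$, which is at most $\delta u$ once $\varepsilon_i$ is small; this uses $r_{i0}>2$.
\item The noise part is at most $\Psi_i(\widehat q_i)u$.
\item On $[1,2]$ everything is $O(\varepsilon_i)$, while $g_i\ge1$.
\end{itemize}
Hence $\mathcal A_ig_i/g_i\le(\Psi_i(\widehat q_i)+\delta)u$ on all of $(0,\infty)$. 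With $h_i:=-ug_i'/g_i\ge0$, this yields the global bound
\[
 \frac{\cL F}{F}\le[\Psi_1(\widehat q_1)+\delta-a_2h_2(y)]x+[\Psi_2(\widehat q_2)+\delta-a_1h_1(x)]y
 \qquad\text{on }E^\circ.
\]
The price is that $h_i(u)\to\widehat q_i$ only as $u\downarrow0$. This is why the paper needs three further ingredients: the strict margins $\Psi_1(\widehat q_1)<a_2\widehat q_2$ and $\Psi_2(\widehat q_2)<a_1\widehat q_1$ (which is what $\widehat q_1+\widehat q_2>1$ supplies), a small $r$, and a second threshold $\eta\le r$. On $\{x\wedge y\le\eta\}$ one gets $\cL F\le-\frac{\tilde\delta}{4}SF$. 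On $\{x\wedge y>\eta\}$, where $F$ is bounded, one only gets $\cL F\le C_\eta S$, and this is absorbed by $\cL S\le C_0-c_0W_0$.
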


\begin{proof}
	By \eqref{q1q2}, we have $
	\Psi_1(\widehat q_1)<a_2\widehat q_2$ and
	$\Psi_2(\widehat q_2)<a_1\widehat q_1.$
	Choose $
	0<\delta<\frac14\min\bigl\{
	a_2\widehat q_2-\Psi_1(\widehat q_1),\,
	a_1\widehat q_1-\Psi_2(\widehat q_2)\bigr\}.$
	Using the cutoff \(\chi\)
	fixed at the beginning of
		\Cref{sec:cooperative-noimm-proofs}, we set
	\[
	g_i(u)=1+\varepsilon_i\chi(u)u^{-\widehat q_i},
	\qquad F(x,y)=g_1(x)g_2(y),
	\]
	where \(\varepsilon_i>0\) will be chosen below. Let
	\beqlb\label{eqmcA}
	\mathcal A_i g(u):= -b_{i0}u^{r_{i0}}g'(u)
	+b_{i1}u^3g''(u) +b_{i2}u^{\alpha_i+1}
	\int_0^\infty
	[g(u+z)-g(u)-zg'(u)]\,\mu_i(\dd z).
	\eeqlb
	For \(0<u\le1\), \Cref{lem:cooperative-power-calculus} and
	\(g_i(u+z)\le1+\varepsilon_i(u+z)^{-\widehat q_i}\)
	give {
		\[
		\begin{aligned}
			\frac{\mathcal A_i g_i(u)}{g_i(u)}
			\le\frac{\varepsilon_i\Psi_i(\widehat q_i)u
				+\widehat q_i b_{i0}\varepsilon_i u^{r_{i0}-1}}
			{u^{\widehat q_i}+\varepsilon_i}
			\le\Psi_i(\widehat q_i)u
			+\frac{\widehat q_i b_{i0}\varepsilon_i u^{r_{i0}-1}}
			{u^{\widehat q_i}+\varepsilon_i}.
		\end{aligned}
		\]
		Since \(r_{i0}>2\),
		\[
		\lim_{\varepsilon_i\downarrow0}\sup_{0<u\le1}
		\frac{\varepsilon_i u^{r_{i0}-2}}
		{u^{\widehat q_i}+\varepsilon_i}=0.
		\]
		On \([1,2]\), the three terms on the right hand of
		\eqref{eqmcA} are \(O(\varepsilon_i)\).
		For \(u\ge2\), all those terms vanish. We may therefore fix
		\(\varepsilon_1,\varepsilon_2>0\) sufficiently small that
		\[
		\frac{\mathcal A_i g_i(u)}{g_i(u)}
		\le[\Psi_i(\widehat q_i)+\delta]u,
		\qquad u>0,\quad i=1,2
		\]
		for constant $
		0<\delta<\frac14\min\{
		a_2\widehat q_2-\Psi_1(\widehat q_1),\,
		a_1\widehat q_1-\Psi_2(\widehat q_2)\}.$
		Let $
		h_i(u)=-\frac{ug_i'(u)}{g_i(u)}\ge0.$
		Then \(h_i(u)\to\widehat q_i\) as \(u\downarrow0\), and
		\eqref{eq:generator} gives
		\begin{align*}
			\frac{\cL F(x,y)}{F(x,y)}
			&=\frac{\mathcal A_1g_1(x)}{g_1(x)}
			+\frac{\mathcal A_2g_2(y)}{g_2(y)}
			-a_2xh_2(y)-a_1yh_1(x)\\
			&\le[\Psi_1(\widehat q_1)+\delta-a_2h_2(y)]x
			+[\Psi_2(\widehat q_2)+\delta-a_1h_1(x)]y.
		\end{align*}
		Choose \(r\in(0,1)\) so that the first bracket is at most
		\(-\tilde\delta\) for all \(y\le r\) and some
		$\tilde\delta>0$, and the second is at most
		\(-\tilde\delta\) for all \(x\le r\). Next choose
		\[
		0<\eta\le r\min\left\{1,\,
		\frac{\tilde\delta/2}{\Psi_1(\widehat q_1)+\delta},\,
		\frac{\tilde\delta/2}{\Psi_2(\widehat q_2)+\delta}\right\}.
		\]
		If \(x,y\le r\), then
		$\cL F(x,y)/F(x,y)$ is at most
		\(-\tilde\delta(x+y)\).
		If \(x\ge r\) and \(y\le\eta\), we have
		\[
		\frac{\cL F(x,y)}{F(x,y)}
		\le-\tilde\delta x+
		[\Psi_2(\widehat q_2)+ \delta]y
		\le-\frac{\tilde\delta x}{2}
		\le-\frac{\tilde\delta}{4}(x+y).
		\]
		Interchanging the coordinates treats
		\(y\ge r\) and \(x\le\eta\).
		Hence $
		\cL F\le-\frac{\tilde\delta}{4}SF$
		when $x\wedge y\le\eta.$
		On \(\{x\wedge y>\eta\}\), \(F\) is bounded and
		\(\cL F\le C_\eta S\), which yields $
		\cL F\le-\frac{\tilde\delta}{4}SF+C_\eta S$
		on \(E^\circ\).
		By \Cref{lem:parameter-drift},
		\(\cL S\le C_0-c_0W_0\) with \(c_1=1\).
		Thus
		\(\cL V\le C-cW\).
		At infinity, \(S\) controls \(V\) and \(W_0\) controls \(W\).
		Away from the origin, both \(F\) and \(SF\) diverge on
		approach to either axis. As $(x,y)\rightarrow(0,0)$,
		\[
		SF\ge\varepsilon_1\varepsilon_2
		(x+y)x^{-\widehat q_1}y^{-\widehat q_2}
		\ge\varepsilon_1\varepsilon_2
		(x+y)^{1-\widehat q_1-\widehat q_2}\rightarrow\infty.
		\]
		Also $\lim_{x,y\rightarrow0}F(x,y)=\infty$.}
	Thus \(V\) and \(W\) have compact sublevel sets
	in \(E^\circ\).
\end{proof}

\begin{lemma}\label{lem:sharp-stationary-tests}
	Assume the hypotheses of \Cref{thm:cooperative-sharp}
	with \(\beta=1\). Let \(\pi\) be an invariant probability
	{ for either the CWP \(Z\) or the SSP \(Z^{\tau_0}\),}
	with \(\pi(E^\circ)=1\). Then {$\int_{E^\circ}(x/y+y/x)
	 \pi(\dd x, \dd y)<\infty.$} 
	Moreover, for every \(p,q>0\) with \(p+q=1\), the function
	\(f(x,y)=x^{-p}y^{-q}\) satisfies $
	\cL f\in L^1(\pi)$ and $\int_{E^\circ}\cL f\,\dd\pi=0.$
\end{lemma}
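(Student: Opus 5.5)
The plan is to prove the integrability claim first, using the two one-variable tests \(x^{-1}\) and \(y^{-1}\) (the endpoint cases \(p=1\) and \(p=0\) of the family). Then I would prove the balance identity \(\int\cL f\,\dd\pi=0\) for \(f=x^{-p}y^{-q}\) by concave truncation, as in \Cref{lem:common-stationary-moments}.

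\emph{Setup.} Since \(\pi(E^\circ)=1\) and \(\partial E\) is absorbing, the argument at the end of the proof of \Cref{thm:cooperative-noimm-boundary} shows that \(\Pp_\pi(\tau_0<\infty)=0\). So under \(\Pp_\pi\) the CWP and the SSP coincide with the interior process, and localized It\^o calculus applies to bounded \(C^2(E^\circ)\) test functions whose generator is bounded.

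The hypotheses (\(\theta_i=\kappa_i=1\), \(r_{i0}>2\)) give strict inequalities in \eqref{eq:cooperative-powers}. Hence \Cref{lem:common-stationary-moments} yields \(\int(x^{r_{10}}+y^{r_{20}})\dd\pi<\infty\).

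With \(r_{i1}=3\) and \(r_{i2}=\alpha_i+1\), \Cref{lem:cooperative-power-calculus} gives, for \(f=x^{-p}y^{-q}\),
\[
\cL f=f\Bigl[\bigl(\Psi_1(p)-a_2q\bigr)x+\bigl(\Psi_2(q)-a_1p\bigr)y+pb_{10}x^{r_{10}-1}+qb_{20}y^{r_{20}-1}\Bigr],
\]
with the convention \(\Psi_i(0)=0\). Since \(p+q=1\), we have \(fx=(x/y)^q\) and \(fy=(y/x)^p\). In particular, taking \(f=1/x\) gives
\[
\cL(1/x)=\Psi_1(1)-a_1\,y/x+b_{10}x^{r_{10}-2}.
\]

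\emph{Step 1.} Apply the truncation \(f_n=n(1-e^{-f/n})\) to \(f=1/x\). Concavity gives \(\cL f_n\le e^{-f/n}\cL f\). Stationarity then gives
\[
a_1\int e^{-1/(nx)}\,\frac yx\,\dd\pi\le\Psi_1(1)+b_{10}\int x^{r_{10}-2}\dd\pi .
\]
The right-hand side is finite because \(r_{10}-2<r_{10}\). Fatou's lemma as \(n\to\infty\) gives \(\int y/x\,\dd\pi<\infty\). Symmetrically, \(\int x/y\,\dd\pi<\infty\).

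\emph{Step 2.} Next I would show \(\cL f\in L^1(\pi)\) for \(f=x^{-p}y^{-q}\). The terms \((x/y)^q\le1+x/y\) and \((y/x)^p\le1+y/x\) are integrable by Step~1. The delicate terms are \(x^{r_{10}-1}f\) and \(y^{r_{20}-1}f\). By Young's inequality,
\[
x^{r_{10}-1}f\le x^{r_{10}-1}\Bigl(\frac px+\frac qy\Bigr),
\]
so I need \(\int x^{r_{10}-1}/y\,\dd\pi<\infty\). To obtain it, I would repeat Step~1 with the test function \(\chi\)-cut-off of \(x^{s}y^{-1}\) for suitable \(s\le r_{10}-1\). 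The \(y\)-drift contributes the good term \(-a_2x^{s+1}/y\), and the damaging contributions are lower-order terms. The possibly positive second-order term \(s(s-1)b_{11}x^{s+1}/y\) has to be controlled, either by \(-a_2x^{s+1}/y\) or, increasing \(s\) step by step, by \(-b_{10}x^{s+r_{10}-1}/y\). This bookkeeping is where I expect the main obstacle. If the exponents do not close, I would instead use the \(L^1(\pi)\) bound on \(-\cL V_0\) from \Cref{lem:common-stationary-moments} to control mixed moments.

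\emph{Step 3.} Finally, I would justify \(\int\cL f\,\dd\pi=0\). Stationarity gives \(\int\cL f_n\,\dd\pi=0\), and
\[
\cL f_n=e^{-f/n}\cL f-R_n ,
\]
where \(R_n\ge0\) collects the second-order and jump corrections arising from the concavity of \(v\mapsto n(1-e^{-v/n})\). Since \(\cL f\in L^1(\pi)\), dominated convergence gives \(\int e^{-f/n}\cL f\,\dd\pi\to\int\cL f\,\dd\pi\).

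It then remains to show \(\int R_n\,\dd\pi\to0\). The diffusion part of \(R_n\) is bounded by
\[
n^{-1}e^{-f/n}\bigl(b_{11}x^3f_x^2+b_{21}y^3f_y^2\bigr)\le C\,\frac{f^2(x+y)}{n}\,e^{-f/n}\le C'f(x+y),
\]
using \(f e^{-f/n}\le n\). Here \(f(x+y)=(x/y)^q+(y/x)^p\in L^1(\pi)\) by Step~1. The \(R_n\) term tends to zero pointwise, so dominated convergence applies. The jump part is treated the same way, with \(\int[f(x+z,y)-f-zf_x]\mu_1(\dd z)\) of size \(fx^{-\alpha_1}\).

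Hence \(\int\cL f\,\dd\pi=0\).
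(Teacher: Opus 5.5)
Your Step~1 is essentially the paper's argument. The paper uses \(R=x^{-1}+y^{-1}\) jointly and a truncation \(\phi_n\) equal to the identity on \([0,n]\); you use \(1/x\) and \(1/y\) separately with \(n(1-e^{-f/n})\). You omit one justification, here and in Step~3: \(\cL f_n\) is not bounded (e.g.\ the term \(e^{-1/(nx)}y/x\)). So \(\int\cL f_n\,\dd\pi=0\) requires \(|\cL f_n|\le C_n(1+W_0)\), which follows from \(fe^{-f/n}\le n/e\). Together with \(W_0\in L^1(\pi)\), this makes the bounded local martingale a true martingale under \(\Pp_\pi\).

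\textbf{The gap is Step~2.} Step~3 rests on it, since you use \(\cL f\in L^1(\pi)\) for dominated convergence. You do not prove \(\int x^{r_{10}-1}/y\,\dd\pi<\infty\), and the sketch points the wrong way.
\begin{itemize}
\item For \(s>1\), in particular near \(r_{10}-1\), the \(x\)-second-order and \(x\)-jump parts of \(\cL(x^sy^{-1})\) are positive. The jump integral is even infinite when \(s\ge\alpha_1\) and \(b_{12}>0\).
\item The \(y\)-self-regulation contributes the positive term \(b_{20}x^sy^{r_{20}-2}\), which you overlook. For \(s=r_{10}-1\), its integrability does not follow from \(\int(x^{r_{10}}+y^{r_{20}})\,\dd\pi<\infty\) unless \(1/r_{10}+2/r_{20}\ge1\); this fails e.g.\ for \(r_{10}=r_{20}=4\).
\item With \(\iota_1=\iota_2=0\), your fallback \(-\cL V_0\in L^1(\pi)\) gives nothing beyond \(W_0\in L^1(\pi)\).
\end{itemize}
Your route can be rescued by taking \(s\in(0,1)\) with \(s\le 2r_{10}/r_{20}\). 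Then the \(x\)-second-order and \(x\)-jump terms are negative, and \(x^sy^{r_{20}-2}\) is integrable by Young. The truncation then gives \(\int x^{s+r_{10}-1}/y\,\dd\pi<\infty\), and \(x^{r_{10}-1}/y\le x/y+x^{s+r_{10}-1}/y\).

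The paper uses an idea that makes Step~2 unnecessary. The terms you call delicate, \(pb_{10}x^{r_{10}-1}f+qb_{20}y^{r_{20}-1}f\), are nonnegative. So only \((\cL f)^-\le C(xf+yf)=C[(x/y)^q+(y/x)^p]\) needs an a priori bound, and Step~1 supplies it. Your own Step~3 estimates (\(R_n\le Cf(x+y)\) and \(R_n\to0\) pointwise) use only Step~1. Hence stationarity gives \(\int e^{-f/n}\cL f\,\dd\pi=\int R_n\,\dd\pi\to0\). It follows that \(\int e^{-f/n}(\cL f)^+\,\dd\pi=\int R_n\,\dd\pi+\int e^{-f/n}(\cL f)^-\,\dd\pi\to\int(\cL f)^-\,\dd\pi<\infty\). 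Monotone convergence (\(e^{-f/n}\uparrow1\)) then yields both \((\cL f)^+\in L^1(\pi)\) and \(\int\cL f\,\dd\pi=0\) at once. This is exactly how the paper concludes, with \(\phi_n\) in place of your exponential truncation.
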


\begin{proof}
{
	Since \(\partial E\) is invariant for both the CWP \(Z\)
	and the SSP \(Z^{\tau_0}\), by stationarity and
	\(\pi(E^\circ)=1\), we have $
	\Pp_\pi(\tau_0\le t)
	\le\Pp_\pi(Z_t\in\partial E)=\pi(\partial E)=0$ for $t \ge 0$.
	Thus \(\Pp_\pi(\tau_0=\infty)=1\), and the two processes
	coincide under \(\Pp_\pi\).
	By \Cref{lem:common-stationary-moments}, $\int_{E^\circ}W_0(x,y)\pi(\dd x,\dd y)<\infty,$
	where \(W_0\) is defined in \eqref{SVW}.}
	Choose
	\[
	\phi(s)=
	\begin{cases}
		s,&0\le s\le1,\\
		\psi(s),&1<s<2,\\
		\psi(2),&s\ge2,
	\end{cases}
	\]
	where \(\psi\) is a smooth interpolation chosen so that
	\(\phi\) is smooth, nondecreasing and concave on
	\([0,\infty)\), and let \(\phi_n(s)=n\phi(s/n)\).
	For \(s>0\) and \(0\le d\le s\), one can check that $|\phi_n(s-d)-\phi_n(s)|^2
	\le Cn\frac{d^2}{s}$ and
	\beqlb\label{eq1007}
		0\le-[\phi_n(s-d)-\phi_n(s)+\phi_n'(s)d]
		\le C\frac{d^2}{s}\1_{\{s\ge n\}} 
	\eeqlb
	with \(C\) depending only on \(\phi\).
	{Indeed, \(0\le\phi_n'\le1\), \(\|\phi_n''\|_\infty\le C/n\) and	\(\|\phi_n\|_\infty\le Cn\). If \(s<n\), the Taylor	remainder vanishes because \(\phi_n\) is the identity	on \([0,n]\). For \(n\le s\le4n\), by Taylor's formula,
	\[	0\le-[\phi_n(s-d)-\phi_n(s)+\phi_n'(s)d]\le C\frac{d^2}{n}\le4C\frac{d^2}{s}	\]
	and
	\[	|\phi_n(s-d)-\phi_n(s)|^2\le d^2\le4n\frac{d^2}{s}.	\] 
 If \(s>4n\), then \(\phi_n'(s)=0\).
	When \(s-d\ge2n\), both \(s-d\) and \(s\) lie in the
	region where \(\phi_n\) is constant, so both expressions
	vanish.
	Otherwise, \(s-d<2n\), and hence $
	d>s-2n>\frac{s}{2}$, $\frac{d^2}{s}>\frac{s}{4}>n.$
	Using \(\|\phi_n\|_\infty\le Cn\), we obtain
	\[
	\begin{aligned}
		0\le
		-[\phi_n(s-d)-\phi_n(s)+\phi_n'(s)d]
		&=\phi_n(s)-\phi_n(s-d)
		\le Cn\le C\frac{d^2}{s},\\
		|\phi_n(s-d)-\phi_n(s)|^2
		&\le Cn^2\le Cn\frac{d^2}{s}.
	\end{aligned}
	\]
	This proves the two estimates for \(s>4n\).}

	First take \(R(x,y)=x^{-1}+y^{-1}\). By
	\Cref{lem:cooperative-power-calculus},
{	
	\begin{equation}
		\cL R(x,y)=\Psi_1(1)+\Psi_2(1)
		+b_{10}x^{r_{10}-2}+b_{20}y^{r_{20}-2}
		-a_1\frac{y}{x}-a_2\frac{x}{y}.
		\label{eq:sharp-R-generator}
	\end{equation}
  Writing \(R=R(x,y)\) for simplification,
	by the concavity of \(\phi_n\), \eqref{eq:generator}, \eqref{eq1007} and \eqref{eq:sharp-R-generator}, we have
	\[
	\begin{aligned}
		0&\le\phi_n'(R)\cL R-\cL(\phi_n\circ R)\\
		&\le-\phi_n''(R)
		\left(\frac{b_{11}}x+\frac{b_{21}}y\right)
		+\frac{C}{R}\1_{\{R\ge n\}}
		\Bigg[
		b_{12}x^{\alpha_1+1}
		\int_0^\infty \left(\frac1x-\frac1{x+z}\right)^2\,\mu_1(\dd z)\cr 
		&\qquad\qquad\qquad\qquad\qquad\qquad\qquad\qquad  +b_{22}y^{\alpha_2+1}
		\int_0^\infty \left(\frac1y-\frac1{y+z}\right)^2\,\mu_2(\dd z)
		\Bigg].
	\end{aligned}
	\]
	The substitutions \(z=xt\), respectively, yield
	\[
	x^{\alpha_i+1}\int_0^\infty \left(\frac1x-\frac1{x+z}\right)^2\,\mu_i(\dd z)
	=\frac{\alpha_i(\alpha_i-1)}x.
	\]
	Moreover, since \(\|\phi_n''\|_\infty\le C/n\) and
	\(\phi_n''=0\) on \([n,2n]^c\),
	\[
	0\le-\phi_n''(R)
	\left(\frac{b_{11}}x+\frac{b_{21}}y\right)
	\le\frac{CR}{n}\1_{\{n\le R\le2n\}}
	\le C\1_{\{R\ge n\}}.
	\]
	Combining these estimates with \(x^{-1}+y^{-1}=R\), we obtain
	\beqlb\label{eq1006b}
	0\le\phi_n'(R(x,y))\cL R(x,y)
	-\cL(\phi_n\circ R)(x,y)
	\le C\1_{\{R(x,y)\ge n\}}.
	\eeqlb
	For fixed \(n\), by \(\phi_n'(R(x,y))R(x,y)\le2n\), we have $
	\phi_n'(R(x,y))\frac{y}{x}\le2ny$ and $
	\phi_n'(R(x,y))\frac{x}{y}\le2nx.$
	Using \(r_{10},r_{20}>2\) and
	\eqref{eq:sharp-R-generator}, we obtain
	\[
	|\phi_n'(R(x,y))\cL R(x,y)|
	\le C_n(1+W_0(x,y)).
	\]
	Hence by the above and \eqref{eq1006b},
	\beqnn
	|\cL (\phi_n\circ R(x, y))| \le |\phi_n'(R(x, y))\cL R(x, y)| + C\1_{\{R(x,y)\ge n\}} \le C_n(1+W_0(x,y)) + C.
	\eeqnn
It follows that \(\cL(\phi_n\circ R)\in L^1(\pi)\).
	As in \eqref{eq:localized-dynkin-martingale}, by It\^o's
	formula,
	\beqlb\label{Mt} 
	M_t^{n,R}:=\phi_n(R(Z_t))-\phi_n(R(Z_0))
	-\int_0^t\cL(\phi_n\circ R)(Z_s)\dd s 
	\eeqlb 
	is a local martingale. 
	For every \(T>0\), 
	\[
	\sup_{0\le t\le T}|M_t^{n,R}|
	\le2\|\phi_n\|_\infty
	+\int_0^T|\cL(\phi_n\circ R)(Z_s)|\dd s.
	\]
	Under \(\Pp_\pi\), \(Z_s\) has distribution \(\pi\) for every
	\(s\ge0\). Hence,  
	\[
	\begin{aligned}
		\E_\pi\int_0^T
		|\cL(\phi_n\circ R)(Z_s)|\dd s
	  =T\int_{E^\circ}
		|\cL(\phi_n\circ R)(z)|\,\pi(\dd z)
		<\infty.
	\end{aligned}
	\]
  Thus localization and dominated
	convergence show that \(M^{n,R}\) is a martingale.
	Taking stationary expectations to \eqref{Mt}, we have
	\begin{equation}
		0=\E_\pi[\phi_n(R(Z_t))-\phi_n(R(Z_0))]
		=t\int_{E^\circ}\cL(\phi_n\circ R)(x,y)
		\pi(\dd x,\dd y),\qquad t>0.
		\label{eq:sharp-R-stationarity}
	\end{equation}
	Consequently, by \eqref{eq:sharp-R-generator},
	\eqref{eq:sharp-R-stationarity}, and the nonnegative
	generator difference above,
	\[
	\begin{aligned}
		\int_{E^\circ}\phi_n'(R(x,y))
		\left(a_1\frac{y}{x}+a_2\frac{x}{y}\right)\pi(\dd x,\dd y)
		&\le\Psi_1(1)+\Psi_2(1)\\
		&\quad+\int_{E^\circ}
		(b_{10}x^{r_{10}-2}+b_{20}y^{r_{20}-2})
		\pi(\dd x,\dd y).
	\end{aligned}
	\] }
	Letting \(n\to\infty\), the first assertion follows
	from Fatou's lemma.
	
	Now fix \(p,q>0\) with \(p+q=1\), and set
		\(f(x,y)=x^{-p}y^{-q}\). By \eqref{eq:generator},
	\[
	\frac{\cL f(x,y)}{f(x,y)}
	=[\Psi_1(p)-a_2q]x+[\Psi_2(q)-a_1p]y
	+p b_{10}x^{r_{10}-1}+q b_{20}y^{r_{20}-1}.
	\]
	Since \(xf(x,y)=(x/y)^q\) and \(yf(x,y)=(y/x)^p\),
	the first assertion gives $
	\int_{E^\circ}(xf(x,y)+yf(x,y))\pi(\dd x,\dd y)<\infty.$ 
Similar to \eqref{eq1006b}, one can check that
\[
0\le \phi_n'(f)\cL f-\cL(\phi_n\circ f)
\le C(xf+yf)\1_{\{f\ge n\}} 
\]
and \(\cL(\phi_n\circ f)\in L^1(\pi)\).
Repeating the argument leading to
\eqref{eq:sharp-R-stationarity}, we obtain $
\int_{E^\circ}\cL(\phi_n\circ f)(x, y)\,\pi(\dd x, \dd y)=0.$
Consequently,
\beqlb
	\label{eq:sharp-truncated-f-identity}
		0
		\ar\le\ar \int_{E^\circ}\phi_n'(f(x, y))\cL f(x, y)\,\pi(\dd x, \dd y) =\int_{E^\circ}
		[\phi_n'(f(x, y))\cL f(x, y)-\cL(\phi_n\circ f)(x, y)]\,\pi(\dd x, \dd y)\cr
		\ar\le\ar C\int_{E^\circ}(xf(x, y)+yf(x, y))
		\1_{\{f(x, y)\ge n\}}\,\pi(\dd x, \dd y).
	\eeqlb
	The right-hand side of \eqref{eq:sharp-truncated-f-identity} tends to zero as \(n\to\infty\)
	by dominated convergence.
	The nonnegative self-regulation terms in \(\cL f\) imply $
	(\cL f)^-(x,y)\le C(xf(x,y)+yf(x,y)),$
	so \((\cL f)^-\in L^1(\pi)\). Hence, as \(n\to\infty\),
	\[
	\begin{aligned}
		\int_{E^\circ}\phi_n'(f(x,y))(\cL f)^+(x,y)
		\pi(\dd x,\dd y)
		&=\int_{E^\circ}\phi_n'(f(x,y))\cL f(x,y)
		\pi(\dd x,\dd y)\\
		&\quad+\int_{E^\circ}\phi_n'(f(x,y))(\cL f)^-(x,y)
		\pi(\dd x,\dd y)\\
		&\longrightarrow\int_{E^\circ}(\cL f)^-(x,y)
		\pi(\dd x,\dd y).
	\end{aligned}
	\]
	Fatou's lemma gives \((\cL f)^+\in L^1(\pi)\), and hence
	\(\cL f\in L^1(\pi)\). Letting \(n\to\infty\) in
	\eqref{eq:sharp-truncated-f-identity} and applying dominated
	convergence, we obtain $
	\int_{E^\circ}\cL f(x,y)\pi(\dd x,\dd y)=0.$
	The result follows.
\end{proof}

\begin{proof}[Proof of \Cref{thm:cooperative-sharp}]
		Since \(\theta_i=\kappa_i=1\) and \(r_{10},r_{20}>2\),
	both inequalities in \eqref{eq:cooperative-powers} are
	strict. Thus \Cref{ass:cooperative-dissipation} holds. By \Cref{lem:imm-lyapunov}, the CWP \(Z\) is conservative,
	and hence so is the SSP \(Z^{\tau_0}\).
	If \(\beta<1\), the left-hand side of condition
	\textup{(iii)} in \Cref{thm:cooperative-noimm-boundary}
	equals \(\beta\), whereas its right-hand side equals \(1\).
	Hence the invariant probabilities
	of the SSP \(Z^{\tau_0}\) are exactly those concentrated on
	\(\partial E\), and \(\delta_{(0,0)}\) is the unique
	invariant probability of the CWP \(Z\).
	If \(\beta>1\), then $
	r_i=(r_{i0}-1)\wedge\beta>1$ for $i = 1, 2$. 
	It follows that $
	(r_1-\theta_1+1)(r_2-\theta_2+1)
	=r_1r_2>1=\kappa_1\kappa_2.$
	By \Cref{thm:cooperative-interior}\textup{(i)}, the CWP \(Z\)
	and the SSP \(Z^{\tau_0}\) have a common nondegenerate
	invariant probability concentrated on \(E^\circ\).
	
	It remains to consider \(\beta=1\). Each \(\Psi_i\)
	extends continuously to \(\Psi_i(0)=0\) and is strictly
	increasing. Hence the defining equations have unique
	solutions \(\widehat q_1,\widehat q_2\in(0,1)\).
	If \(\widehat q_1+\widehat q_2>1\),
	\Cref{lem:sharp-boundary-lyapunov,lem:interior-lyapunov}
	give an invariant probability concentrated on
	\(E^\circ\), common to the CWP \(Z\)
		and the SSP \(Z^{\tau_0}\).
	
	{
	Suppose that \(\widehat q_1+\widehat q_2\le1\), and that
	an invariant probability \(\pi\) for the CWP \(Z\)
	or the SSP \(Z^{\tau_0}\) satisfies \(\pi(E^\circ)>0\).
	Notice that \(\partial E\)
	is invariant for the CWP and the SSP.
	As in the proof of \Cref{thm:cooperative-noimm-boundary}
	\textup{(iii)}, the normalized restriction
	\(\pi(\cdot\cap E^\circ)/\pi(E^\circ)\) is also invariant.
	Replacing \(\pi\) by this restriction, assume
	\(\pi(E^\circ)=1\). Choose
	\(p\in[\widehat q_1,1-\widehat q_2]\) and  \(q=1-p\).
	Then by \(p,q\in(0,1)\), we have $\Psi_1(p) \ge\Psi_1(\widehat q_1)
	=a_2(1-\widehat q_1)\ge a_2q$ and $\Psi_2(q) \ge a_1p.$
  For \(f(x,y)=x^{-p}y^{-q}\)},
	\[
	\frac{\cL f(x,y)}{f(x,y)}
	=[\Psi_1(p)-a_2q]x+[\Psi_2(q)-a_1p]y
	+p b_{10}x^{r_{10}-1}+q b_{20}y^{r_{20}-1}>0.
	\]
	This contradicts
	\(\int_{E^\circ}\cL f(x,y)\pi(\dd x,\dd y)=0\)
	from \Cref{lem:sharp-stationary-tests}.
	Thus every invariant probability is concentrated on
	\(\partial E\).
	By \Cref{def:absorbed}, every probability concentrated on
	\(\partial E\) is invariant for the SSP \(Z^{\tau_0}\).
	For the CWP \(Z\), the axis argument in the proof of
	\Cref{prop:noimm-classification}\textup{(ii)}, using
	\(b_{10},b_{20}>0\), shows that \(\delta_{(0,0)}\) is
	the unique invariant probability.
\end{proof}

\section{Models with immigration}
\label{sec:imm-proofs}

Throughout this section, we assume \eqref{eq:imm-regularity} and
\Cref{ass:weighted-dissipation} hold. Fix \(c_1,c,C_0\) as in
\Cref{lem:parameter-drift}, so that
\eqref{eq:weighted-dissipation} holds. Here
\(\cL\) denotes the operator in \eqref{eq:generator}.

\subsection{{Proof of  \Cref{thm:imm-existence}}}

We first prove the weak Feller property.

\begin{lemma} 
\label{lem:imm-weak-feller}
The transition semigroup \((P_t)_{t\ge0}\) is weak Feller.
\end{lemma}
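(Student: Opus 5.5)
We will prove that $(P_t)_{t\ge0}$ is weak Feller by combining a localized $L^1$-stability estimate for synchronously coupled solutions with the uniform nonexplosion bound of \Cref{lem:imm-lyapunov}. Fix $t>0$, $f\in C_b(E)$ and a sequence $z_k\to z$ in $E$. Let $Z^{(k)}$ and $Z$ be the pathwise unique nonnegative strong solutions of \eqref{eq:model} from \Cref{prop:imm-wellposed}, started at $z_k$ and $z$ and driven by the same $B_1,B_2,N_1,N_2$. For $N>0$ put
\[
\sigma_N^{(k)}:=\inf\{s\ge0:\ |Z^{(k)}_s|\vee|Z_s|\ge N\}.
\]
Since $K:=\{z,z_k:k\ge1\}$ is compact, \Cref{lem:imm-lyapunov} gives
\[
\sup_k\Pp\bigl(\sigma_N^{(k)}\le t\bigr)\le 2\sup_{w\in K}\Pp_w(\tau_N^+\le t)\longrightarrow0
\qquad\text{as }N\to\infty .
\]

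The core step is the stopped estimate
\[
\E\bigl|Z^{(k)}_{t\wedge\sigma_N^{(k)}}-Z_{t\wedge\sigma_N^{(k)}}\bigr|_1\le \omega_N\bigl(|z_k-z|_1\bigr),
\]
where $\omega_N(r)\to0$ as $r\to0$ (in the Lipschitz-in-the-interior case it is of the form $e^{C_Nt}r$). This is where the appendix argument for pathwise uniqueness is reused, via a Yamada--Watanabe type computation applied to each coordinate difference.
\begin{itemize}
\item On $[0,N]^2$ the drift $\iota_1+a_1x^{\theta_1}y^{\kappa_1}-b_{10}x^{r_{10}}$ is Lipschitz, because $\theta_i,\kappa_i\ge1$ and $r_{i0}>1$ under \eqref{eq:imm-regularity}.
\item The Brownian coefficient $\sqrt{x^{r_{11}}}$ is $1/2$-H\"older, since $r_{11}\ge1$.
\item The jump rate $x^{r_{12}}$ is Lipschitz on $[0,N]$, and the jump part is handled in the standard Li--Pu/Fu--Li manner, using that the integrand $z\1_{\{u\le b_{12}x^{r_{12}}\}}$ is monotone in $x$.
\end{itemize}
Using smooth approximations $\phi_m$ of $|\cdot|$ and letting $m\to\infty$ yields a Gronwall inequality for $\E|X^{(k)}-X|$ plus $\E|Y^{(k)}-Y|$ up to $\sigma_N^{(k)}$. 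Because of the cross interaction terms, the two coordinates must be treated jointly.

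With this estimate in hand, split
\[
|P_tf(z_k)-P_tf(z)|\le \E\bigl|f(Z^{(k)}_{t\wedge\sigma})-f(Z_{t\wedge\sigma})\bigr|+2\|f\|_\infty\,\Pp\bigl(\sigma_N^{(k)}\le t\bigr),
\]
with $\sigma=\sigma_N^{(k)}$. By uniform continuity of $f$ on $[0,N]^2$ and the stopped $L^1$ estimate, the first term tends to zero as $k\to\infty$ for fixed $N$. Letting $N\to\infty$ then removes the second term. Conservativeness from \Cref{lem:imm-lyapunov} ensures that $P_t1=1$, so no killing term appears. Hence $P_tf\in C_b(E)$.

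The main obstacle is the coupling estimate near the boundary, where the diffusion coefficient fails to be Lipschitz. There we rely on the Yamada--Watanabe function sequence, which absorbs the H\"older-$1/2$ term exactly as in the pathwise uniqueness proof of \Cref{prop:imm-wellposed}. For the stable jump part we use that the compensated jump term contributes a nonpositive amount after the monotone coupling. If only $\omega_N$ rather than an exponential rate is available, the argument above still works.
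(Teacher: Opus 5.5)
Your proposal is correct and follows the paper's proof essentially step by step: synchronous coupling, the uniform exit bound from \Cref{lem:imm-lyapunov}, the stopped $L^1$ estimate \eqref{eq:stopped-L1-continuity}, and the same two-term split. The paper quotes \eqref{eq:stopped-L1-continuity} directly from the pathwise-uniqueness argument in \Cref{prop:imm-wellposed} rather than re-deriving it. One cosmetic point: the first term of your split should carry the indicator of $\{\sigma_N^{(k)}>t\}$, as the paper's does, because jumps can overshoot at $\sigma_N^{(k)}$, and uniform continuity of $f$ can then be applied where both states lie in the ball of radius $N$.
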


\begin{proof}
Fix \(t>0\) and let \((x_n,y_n)\to(x_0,y_0)\) in \(E\).
On one probability space, use \(B_1,B_2,N_1,N_2\) to construct
the solutions \((X^{(n)},Y^{(n)})\) and \(Z\) with these
initial states. For \(R>1\), let
\[
 \rho_R^{(n)}:=\inf\{s\ge0:
 X_s^{(n)}\vee Y_s^{(n)}\vee X_s\vee Y_s\ge R\}.
\]
By convergence of the initial states,
\Cref{lem:imm-lyapunov} gives
\begin{equation}
 \lim_{R\to\infty}\sup_{n\ge1}
 \Pp(\rho_R^{(n)}\le t)=0.
 \label{eq:imm-uniform-exit}
\end{equation}
For fixed \(R\), \eqref{eq:stopped-L1-continuity} in
\Cref{prop:imm-wellposed} yields
\[
 \E\bigl[|X_{t\wedge\rho_R^{(n)}}^{(n)}-X_{t\wedge\rho_R^{(n)}}|
 +|Y_{t\wedge\rho_R^{(n)}}^{(n)}-Y_{t\wedge\rho_R^{(n)}}|\bigr]
 \le\e^{C_Rt}(|x_n-x_0|+|y_n-y_0|),
\]
{ which gives}
\[
 |X_{t\wedge\rho_R^{(n)}}^{(n)}-X_{t\wedge\rho_R^{(n)}}|
 +|Y_{t\wedge\rho_R^{(n)}}^{(n)}-Y_{t\wedge\rho_R^{(n)}}|
 \rightarrow0
\]
{in probability as \(n\to\infty\).
For \(f\in C_b(E)\), we have}
\[
 \E\left[
 \left|f(X_t^{(n)},Y_t^{(n)})-f(Z_t)\right|
 \1_{\{\rho_R^{(n)}>t\}}
 \right]\rightarrow0\quad{\text{as }n\to\infty}.
\]
Consequently,
\begin{align*}
 &\left|P_tf(x_n,y_n)-P_tf({x_0,y_0})\right|\\
 &\quad\le
 \E\left[
 \left|f(X_t^{(n)},Y_t^{(n)})-f(Z_t)\right|
 \1_{\{\rho_R^{(n)}>t\}}
 \right]
 +2\lVert f\rVert_\infty\Pp(\rho_R^{(n)}\le t).
\end{align*}
Taking first \(n\to\infty\) and then \(R\to\infty\),
and using \eqref{eq:imm-uniform-exit}, we obtain
\(P_tf(x_n,y_n)\to P_tf(x_0,y_0)\).
Thus \(P_tf\in C_b(E)\). The result follows.
\end{proof}

The preceding estimates give the following compactness result.

\begin{lemma}
\label{lem:imm-time-averages}
For every \((x_0,y_0)\in E\), the family
\(\{\Pi_T^{(x_0,y_0)}:T\ge1\}\) is tight, and every weak
limit as \(T\to\infty\) is an invariant probability.
\end{lemma}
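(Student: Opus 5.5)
The argument combines the occupation bound of \Cref{lem:imm-lyapunov} with the Krylov--Bogoliubov lemma \Cref{lem:time-average-limits}, applied with \(D=E\).

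\textbf{Step 1: occupation bound.} Fix \((x_0,y_0)\in E\) and take \(c_1,c,C_0\) from \Cref{lem:parameter-drift}. By \eqref{eq:weighted-dissipation}, the integrand in \eqref{eq:imm-occupation-bound} satisfies \(\iota_1+c_1\iota_2+C_0-\cL V_0\ge cW_0\) on \(E\). Hence \eqref{eq:imm-occupation-bound} gives, for \(T\ge1\),
\[
 \int_E W_0\,\dd\Pi_T^{(x_0,y_0)}
 =\frac1T\E_{x_0,y_0}\int_0^T W_0(Z_s)\dd s
 \le\frac{V_0(x_0,y_0)}{cT}+\frac{\iota_1+c_1\iota_2+C_0}{c}.
\]
The right-hand side is bounded uniformly in \(T\ge1\). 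The interchange of expectation and time integral is justified by Tonelli's theorem, and \(\Pi_T^{(x_0,y_0)}\) is a genuine probability because \(Z\) is nonexplosive by \Cref{lem:imm-lyapunov}.

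\textbf{Step 2: tightness.} Since \(r_{10},r_{20}>1>0\), the function \(W_0(x,y)=x^{r_{10}}+y^{r_{20}}\) is continuous on \(E\) with compact sublevel sets \(\{W_0\le R\}\subset[0,R^{1/r_{10}}]\times[0,R^{1/r_{20}}]\). Markov's inequality then gives
\[
 \sup_{T\ge1}\Pi_T^{(x_0,y_0)}(\{W_0>R\})\le\frac{C'}{R}\to0
 \qquad\text{as }R\to\infty,
\]
where \(C'\) is the uniform bound from Step 1. This proves tightness of \(\{\Pi_T^{(x_0,y_0)}:T\ge1\}\) on \(E\).

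\textbf{Step 3: invariance of limits.} The semigroup \((P_t)_{t\ge0}\) is conservative by \Cref{lem:imm-lyapunov} and weak Feller by \Cref{lem:imm-weak-feller}. The solution has c\`adl\`ag paths in \(E\) by \Cref{prop:imm-wellposed}. \Cref{lem:time-average-limits} with \(D=E\) therefore shows that every weak subsequential limit of \(\Pi_T^{(x_0,y_0)}\) is invariant. Existence of such limits follows from tightness and Prokhorov's theorem.

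No step is a real obstacle, since the substantive work is already in the cited lemmas. The only point needing care is to use the occupation estimate \eqref{eq:imm-occupation-bound} rather than the first-moment bound \eqref{eq:V-first-moment}. The latter controls only \(V_0\), which already has compact sublevel sets on \(E\) and would also suffice for tightness. However, the \(W_0\) bound additionally yields \(\int W_0\,\dd\pi\le(\iota_1+c_1\iota_2+C_0)/c\) for limit points via lower semicontinuity, which is consistent with \Cref{lem:common-stationary-moments}.
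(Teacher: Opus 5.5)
Your proposal is correct and follows the paper's proof: a Lyapunov bound with Markov's inequality gives tightness, and then \Cref{lem:time-average-limits} applies, using conservativeness and the weak Feller property from \Cref{lem:imm-lyapunov,lem:imm-weak-feller}. The only difference is that the paper bounds $\int_E V_0\,\dd\Pi_T^{(x_0,y_0)}$ via \eqref{eq:V-first-moment}, whereas you bound $\int_E W_0\,\dd\Pi_T^{(x_0,y_0)}$ via \eqref{eq:imm-occupation-bound}; as you note, both functions have compact sublevel sets in $E$, so either bound suffices.
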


\begin{proof}
Nonexplosion and conservativeness follow from
\Cref{lem:imm-lyapunov}, while \Cref{lem:imm-weak-feller} gives the
weak Feller property.  {By \Cref{lem:imm-lyapunov} again,
there exists a constant \(C>0\) such that, for \(T\ge1\),}
\[
 \int_E {V_0}(u,v)\,\Pi_T^{({x_0,y_0})}(\dd u,\dd v)
 =\frac1T\int_0^T\E_{{x_0,y_0}}{V_0}(Z_t)\dd t
 \le\frac{{V_0}({x_0,y_0})}{T}+C.
\]
Markov's inequality therefore yields
\[
 \sup_{T\ge1}\Pi_T^{(x_0,y_0)}({V_0}>R)
 \le \frac{{V_0}(x_0,y_0)+C}{R}
 \rightarrow0\qquad\text{as }R\to\infty.
\]
Each set \(\{V_0\le R\}\) is compact in \(E\). Hence
\(\{\Pi_T^{(x_0,y_0)}:T\ge1\}\) is tight.
The invariance of every weak limit as \(T\to\infty\)
follows from \Cref{lem:time-average-limits}.
\end{proof}

\begin{proof}[Proof of \Cref{thm:imm-existence}]
By \Cref{lem:imm-time-averages}, the time-averaged transition
laws have a weakly convergent subsequence, and every such
limit is an invariant probability.
For any invariant probability \(\pi\), the bounds
in \eqref{eq:imm-moment} follow from
\Cref{lem:common-stationary-moments}, after substituting
\eqref{eq:weighted-generator-expansion} into its first
inequality and cancelling \(C_0\).
\end{proof}

\subsection{Proofs of Corollaries \ref {cor:stationary-balance} and \ref{cor:individual-interaction-balances}}
We use the local martingale \((M_t)_{t \ge 0}\) defined by \eqref{M} 
to justify taking expectations in the stationary equations.

\begin{lemma}
	\label{lem:stationary-noise-martingale}
	Let \(Z\) be a stationary process with marginal distribution \(\pi\).
	Assume that \eqref{eq:imm-moment} and
	\eqref{eq:noise-moment-order} hold.
Then \((M_t)_{t\ge0}\), defined in \eqref{M}, is an
integrable martingale under \(\Pp_\pi\), and
\(\E_\pi M_t=0\) for every \(t\ge0\).
\end{lemma}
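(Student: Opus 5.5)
We treat the four pieces of $M$ in \eqref{M} separately and show that each is a martingale under $\Pp_\pi$ with zero mean. Stationarity turns time integrals of functions of $Z_s$ into $t$ times a $\pi$-integral. By \eqref{eq:imm-moment}, $\int_E(x^{r_{10}}+y^{r_{20}})\dd\pi<\infty$. Condition \eqref{eq:noise-moment-order} gives $r_{ij}\le r_{i0}$, hence $x^{r_{1j}}\le 1+x^{r_{10}}$, and similarly for $y$. So all integrands appearing below have finite $\pi$-integrals.

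\textbf{Brownian parts.} The bracket is
\[
\E_\pi\int_0^t 2b_{11}X_s^{r_{11}}\dd s=2b_{11}t\int_E x^{r_{11}}\dd\pi<\infty .
\]
Hence $\int_0^\cdot\sqrt{2b_{11}X_s^{r_{11}}}\dd B_1(s)$ is an $L^2$ martingale with mean zero. The same argument applies to the $Y$-term.

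\textbf{Jump parts.} We split each jump integral at $z=1$.

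For the small jumps $z\le1$, the compensated integral is an $L^2$ martingale. Its second moment is
\[
\E_\pi\int_0^t b_{12}X_s^{r_{12}}\dd s\int_0^1 z^2\mu_1(\dd z),
\]
which is finite because $\int_0^1 z^2\mu_1(\dd z)<\infty$.

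For the large jumps $z>1$, we use the $L^1$ criterion: a compensated Poisson integral with integrand $H\ge0$ satisfying $\E\int_0^t\!\int H\,\dd s\,\mu\,\dd u<\infty$ is a martingale with zero mean. This is checked by writing it as the difference of the raw integral and its compensator, both integrable, with the martingale property following from predictability. Here
\[
\E_\pi\int_0^t b_{12}X_s^{r_{12}}\dd s\int_1^\infty z\,\mu_1(\dd z)<\infty,
\]
since $\alpha_1>1$.

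\textbf{Conclusion.} Summing the four pieces, $M$ is an integrable martingale with $\E_\pi M_t=0$.

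The only delicate point is that $\pi$ is not assumed compactly supported, so the local martingale may genuinely need these global moment bounds. This is exactly why \eqref{eq:noise-moment-order} is imposed: it is what allows the noise rates to be dominated by $W_0$. If $b_{ij}=0$, the corresponding term is absent, which is consistent with the convention that the condition applies only when $b_{ij}>0$.
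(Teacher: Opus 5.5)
Your proposal is correct and takes the same route as the paper's proof. Stationarity, together with \eqref{eq:imm-moment}, \eqref{eq:noise-moment-order} and $u^{q}\le 1+u^{r}$, gives finiteness of $\E_\pi\int_0^t\bigl(b_{1j}X_s^{r_{1j}}+b_{2j}Y_s^{r_{2j}}\bigr)\dd s$; after splitting at $z=1$, It\^o's isometry handles the Brownian and small-jump parts, and $\int_1^\infty z\,\mu_i(\dd z)<\infty$ makes the compensated large-jump parts integrable martingales with zero mean.
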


\begin{proof}
Since \(u^q\le1+u^r\) for \(u\ge0\) and \(0\le q\le r\), by
stationarity, \eqref{eq:imm-moment} and
\eqref{eq:noise-moment-order}, we have, for \(j=1,2\),
\[
\begin{aligned}
 \E_\pi\int_0^t
 \bigl(b_{1j}X_s^{r_{1j}}+b_{2j}Y_s^{r_{2j}}\bigr)\dd s=t\int_E
 \bigl(b_{1j}x^{r_{1j}}+b_{2j}y^{r_{2j}}\bigr)\pi(\dd x,\dd y)
 <\infty.
\end{aligned}
\]
Split each stable integral at \(z=1\). By It\^o's isometry,
the Brownian and compensated small-jump terms of $M_t$ are square-integrable
martingales. Since \(\int_1^\infty z\,\mu_i(\dd z)<\infty\),
the preceding bound also makes each compensated large-jump term of $M_t$
are integrable martingale. Thus \((M_t)_{t \ge 0}\) is an integrable martingale
with \(\E_\pi M_t=0\).
\end{proof}

\begin{proof}[Proof of \Cref{cor:stationary-balance}]
\textup{(i)} Let \(Z_0\) have law \(\pi\) and $V_0$ define in \eqref{SVW}.
The moment bound \eqref{eq:imm-moment} and \eqref{eq:weighted-generator-expansion} imply \({V_0}\in L^1(\pi)\).
Moreover, \(\iota_1+c_1\iota_2+C_0-\cL {V_0}\) is nonnegative
by \eqref{eq:weighted-dissipation} and integrable by
\Cref{lem:common-stationary-moments}; hence \(\cL {V_0}\in L^1(\pi)\). Taking expectations in \eqref{Vito} under
\(\Pp_\pi\), using stationarity and
\Cref{lem:stationary-noise-martingale}, gives
\[
 0=t\int_E\cL {V_0}(x,y)\pi(\dd x,\dd y),\qquad t>0.
\]
Then \eqref{eq:stationary-balance} follows by \eqref{eq:weighted-generator-expansion}.

\textup{(ii)} Under \eqref{eq:balanced-interactions}, choose
\(c_1=a_1/(-a_2)\) as in the proof of \Cref{lem:parameter-drift}.
The interaction terms cancel, giving part \textup{(ii)}.
\end{proof}

\begin{proof}[Proof of \Cref{cor:individual-interaction-balances}]
As shown in the proof of
\Cref{cor:stationary-balance}, \(\cL V_0\in L^1(\pi)\).
Together with \eqref{eq:weighted-generator-expansion} and
\eqref{eq:imm-moment}, this implies
\[
a_1x^{\theta_1}y^{\kappa_1}
 +c_1a_2x^{\kappa_2}y^{\theta_2}\in L^1(\pi).
\]
Since \(a_1a_2>0\), the two terms have the same sign, so each is integrable. The proof of \Cref{lem:stationary-noise-martingale} shows that
each term of $M_t$ (defined in \eqref{M}) is a martingale.
Taking expectations in the two equations of \eqref{eq:model}
under \(\Pp_\pi\), and using stationarity, gives
\eqref{eq:individual-interaction-balances}.
\end{proof}

\section{Stationary coexistence for square-root diffusions}
\label{sec:sqrt-proofs}

We prove \Cref{cor:sqrt-main}, beginning with the boundary
properties. For the local comparison and boundary results below,
assume \(a_1,a_2\in\R\), \(\iota_i,b_{i0},b_{i1}>0\),
\(r_{i0}>1\), and \(\theta_i,\kappa_i>0\), \(i=1,2\).
Let \((U_t)_{t \ge 0}\) be the  Cox--Ingersoll--Ross (CIR) diffusion satisfying
\[
 U_t=u+\int_0^t(a-cU_s)\dd s
 +\int_0^t\sqrt{2bU_s}\dd B_s,
 \qquad u\ge0,
\]
where \(a,b>0\) and \(c\ge0\), and \((B_t)_{t \ge 0}\) is a Brownian motion.
The CIR--Bessel correspondence and the transition densities in
\cite[Section~1 and Appendix~A.2]{GoingJaeschkeYor2003} give
\begin{equation}
 \Pp_u(U_t=0)=0,\qquad u\ge0,\quad t>0.
 \label{eq:CIR-no-atom}
\end{equation}
Moreover, if \(a\ge b\), the corresponding squared Bessel
process has dimension \(2a/b\ge2\) and is positive at every
positive time, see \cite[Section~1]{GoingJaeschkeYor2003}.
Hence
\begin{equation}
 \Pp_u(U_t>0\text{ for every }t>0)=1,\qquad u\ge0.
 \label{eq:CIR-pathwise-positive}
\end{equation}
We next compare each coordinate with a CIR diffusion up to
\(\tau_N^+\).

\begin{lemma}
\label{lem:localized-cir-comparison}
Let \(Z\) be a nonnegative solution to
\eqref{eq:sqrt-system} with initial state \((x_0,y_0)\in E\).
{Assume that {\(a_1>0>a_2\)},
\eqref{eq:balanced-interactions} holds, and
\(\kappa_1,\kappa_2>0\).}  For \(N>1+({x_0\vee y_0})\), set $\tau_N^+:=\inf\{t\ge0:X_t\vee Y_t\ge N\}$ and $c_{1,N}:=b_{10}N^{r_{10}-1}.$
Then there exists \(c_{2,N}>0\) such that the solutions {\(U^{(i,N)}=(U_t^{(i,N)})_{t\ge0}\), \(i=1,2\)} of
\[
\left\{
\begin{aligned}
 U_t^{(1,N)}
 &=x_0+\int_0^t\bigl(\iota_1-c_{1,N}U_s^{(1,N)}\bigr)\dd s
 +\int_0^t\sqrt{2b_{11}U_s^{(1,N)}}\dd B_1(s),\\
 U_t^{(2,N)}
 &=y_0+\int_0^t\left(\frac{\iota_2}{2}-c_{2,N}U_s^{(2,N)}\right)\dd s
 +\int_0^t\sqrt{2b_{21}U_s^{(2,N)}}\dd B_2(s),
\end{aligned}
\right.
\]
obey
\begin{equation}
 X_t\ge U_t^{(1,N)},
 \qquad Y_t\ge U_t^{(2,N)},
 \qquad 0\le t<\tau_N^+,
 \quad\text{almost surely}.
 \label{eq:cir-localized-comparison}
\end{equation}
If, in addition, \(\kappa_1\ge1\), set
{\(\widetilde c_{2,N}:=(-a_2)N^{\kappa_1+\kappa_2-1}
 +b_{20}N^{r_{20}-1}\)}, and let
\(\widetilde U^{(2,N)}:=(\widetilde U_t^{(2,N)})_{t\ge0}\) solve
\[
 \widetilde U_t^{(2,N)}=y_0
 +\int_0^t\bigl(\iota_2-\widetilde c_{2,N}\widetilde U_s^{(2,N)}\bigr)\dd s
 +\int_0^t\sqrt{2b_{21}\widetilde U_s^{(2,N)}}\dd B_2(s),
\]
then
\begin{equation}
 Y_t\ge\widetilde U_t^{(2,N)},
 \qquad 0\le t<\tau_N^+,
 \quad\text{almost surely}.
 \label{eq:cir-strengthened-comparison}
\end{equation}
\end{lemma}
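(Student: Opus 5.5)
The plan is to prove both comparisons by the same scheme. First, up to \(\tau_N^+\), I bound the drift of each coordinate of \eqref{eq:sqrt-system} from below by an affine function of that coordinate alone. Then I run a pathwise comparison argument of Yamada--Watanabe type for one-dimensional diffusions that share the Hölder-\(1/2\) coefficient \(\sqrt{2b_{i1}u}\) and the same Brownian motion \(B_i\). The CIR equations in the statement have pathwise unique nonnegative strong solutions, so \(U^{(i,N)}\) and \(\widetilde U^{(2,N)}\) are defined on the same space as \(Z\).

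\emph{Drift bounds.} On \(\{t<\tau_N^+\}\) we have \(0\le X_t,Y_t<N\).
\begin{itemize}
\item For \(X\), use \(a_1X^{\theta_1}Y^{\kappa_1}\ge0\) and \(X^{r_{10}}=X^{r_{10}-1}X\le N^{r_{10}-1}X\). This gives
\[
\iota_1+a_1X^{\theta_1}Y^{\kappa_1}-b_{10}X^{r_{10}}\ge \iota_1-c_{1,N}X .
\]
\item For \(Y\), the balanced condition \eqref{eq:balanced-interactions} turns the interaction term into \(a_2Y^{\kappa_1}X^{\theta_1}\), and \(a_2<0\). Using \(X^{\theta_1}\le N^{\theta_1}\), it remains to control \(Y^{\kappa_1}\) linearly.
\item If \(\kappa_1\ge1\), use \(Y^{\kappa_1}\le N^{\kappa_1-1}Y\), together with \(\theta_1=\kappa_2\) and \(Y^{r_{20}}\le N^{r_{20}-1}Y\). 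This yields drift \(\ge\iota_2-\widetilde c_{2,N}Y\), which is the constant in \eqref{eq:cir-strengthened-comparison}.
\item For general \(\kappa_1>0\), use the elementary bound \(y^{\kappa_1}\le\delta+C_\delta y\) on \([0,N]\). Choose \(\delta\) with \((-a_2)N^{\theta_1}\delta\le\iota_2/2\). This gives drift \(\ge\iota_2/2-c_{2,N}Y\) with
\[
c_{2,N}:=(-a_2)N^{\theta_1}C_\delta+b_{20}N^{r_{20}-1}.
\]
\end{itemize}

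\emph{Comparison.} Fix one coordinate, say \(X\) against \(U:=U^{(1,N)}\), and let \(\Delta_t:=U_t-X_t\). Take the standard Yamada functions \(\phi_k\in C^2\) with \(\phi_k(u)\uparrow u^+\), \(0\le\phi_k'\le1\), and \(\phi_k''(u)\le 2/(k u)\) supported on \(\{a_k<u<a_{k-1}\}\) for \(u>0\), vanishing for \(u\le0\).

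Apply Itô's formula to \(\phi_k(\Delta_{t\wedge\tau_N^+})\). The quadratic-variation term is \(\tfrac12\phi_k''(\Delta)\,2b_{11}(\sqrt U-\sqrt X)^2\). Since \((\sqrt U-\sqrt X)^2\le|U-X|\), it is bounded by \(2b_{11}/k\to0\). The martingale part is a true martingale up to \(\tau_N^+\), because the integrands are bounded there.

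For the drift part, on \(\{\Delta>0\}\) the drift bound gives
\[
(\iota_1-c_{1,N}U)-(\text{drift of }X)\le(\iota_1-c_{1,N}U)-(\iota_1-c_{1,N}X)=-c_{1,N}\Delta\le0 .
\]
Letting \(k\to\infty\) yields \(\E(\Delta_{t\wedge\tau_N^+})^+\le(\Delta_0)^+=0\). Hence \(U_{t\wedge\tau_N^+}\le X_{t\wedge\tau_N^+}\) a.s. for each \(t\), and by path continuity for all \(t<\tau_N^+\) simultaneously. Both coordinates of \eqref{eq:sqrt-system} are continuous, since there are no jumps. The identical argument with \(\iota_2/2,c_{2,N}\), respectively \(\iota_2,\widetilde c_{2,N}\), gives \eqref{eq:cir-localized-comparison} and \eqref{eq:cir-strengthened-comparison}.

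\emph{Main obstacle.} The only delicate point is the non-Lipschitz interaction \(Y^{\kappa_1}\) when \(\kappa_1<1\). That is why half of \(\iota_2\) is sacrificed to absorb the constant \(\delta\). The dependence of the \(Y\)-drift on the random \(X_t\) is harmless, because the lower bound holds pathwise before \(\tau_N^+\). Once this is in place, the comparison step is the routine Yamada--Watanabe argument, since the comparison drift \(u\mapsto\iota-cu\) is Lipschitz and nonincreasing.
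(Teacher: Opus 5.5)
Your proposal is correct and follows the paper's route. You use the same pathwise affine lower bounds on the drifts before \(\tau_N^+\); your bound \(y^{\kappa_1}\le\delta+C_\delta y\), which sacrifices half of \(\iota_2\), is exactly the paper's \(h_N(v)\le\iota_2/2+c_{2,N}v\). You then apply a one-dimensional comparison theorem, which the paper cites from Ikeda--Watanabe and you reprove with Yamada functions.

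One small correction: stopping at \(\tau_N^+\) bounds \(X\) but not the CIR process \(U\). The stochastic integral with integrand \(\phi_k'(\Delta_s)\bigl(\sqrt{2b_{11}U_s}-\sqrt{2b_{11}X_s}\bigr)\) is therefore a true martingale because \(\E\int_0^t U_s\dd s<\infty\) (or after also stopping \(U\) at a large level), not because the integrand is bounded.
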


\begin{proof}
For \(t<\tau_N^+\), we have $X_t^{r_{10}}\le N^{r_{10}-1}X_t,$
and hence the first drift in \eqref{eq:sqrt-system} is bounded below by
\(\iota_1-c_{1,N}X_t\).  For \(0\le v\le N\), set {$h_N(v):=(-a_2)N^{\kappa_2}v^{\kappa_1}+b_{20}v^{r_{20}}.$}
Since \(h_N\) is continuous and \(h_N(0)=0\), choose
\(\delta_N\in(0,N)\) such that $h_N(v)\le\frac{\iota_2}{2}$ for $0\le v\le\delta_N,$
and set $c_{2,N}:=\max_{\delta_N\le v\le N}\frac{h_N(v)}{v}.$
It follows that $h_N(v)\le\frac{\iota_2}{2}+c_{2,N}v$
for $0\le v\le N,$
so the second drift {in \eqref{eq:sqrt-system}} is bounded below by
\(\iota_2/2-c_{2,N}Y_t\) before \(\tau_N^+\). Moreover, \eqref{eq:cir-localized-comparison} follows by the comparison theorem, see, e.g., 
\cite[Theorem~1.1 and Remark~1.1]{IkedaWatanabe1977}.

If \(\kappa_1\ge1\), then, for \(0\le u,v\le N\), we have $v^{\kappa_1}u^{\kappa_2}
 \le N^{\kappa_1+\kappa_2-1}v$ and $v^{r_{20}}\le N^{r_{20}-1}v.$
Thus the second drift in \eqref{eq:sqrt-system} is bounded below by
\(\iota_2-\widetilde c_{2,N}Y_t\) before \(\tau_N^+\).  Then {\eqref{eq:cir-strengthened-comparison}} follows by the same
comparison theorem, see, e.g.,
\cite[Theorem~1.1 and Remark~1.1]{IkedaWatanabe1977}. 
\end{proof}

\begin{lemma}
\label{lem:generalized-cir-comparison}
Let \(Z\) be a nonnegative solution to
\eqref{eq:sqrt-system} with initial state \((x_0,y_0)\in E\).
Assume that \(\theta_i\ge1\), \(i=1,2\), and allow
{\(a_1,a_2\in\R\)}. For \(N>1+(x_0\vee y_0)\), set
\[
 d_{1,N}:=b_{10}N^{r_{10}-1}+(-a_1)^+N^{\theta_1+\kappa_1-1},
 \qquad
 d_{2,N}:=b_{20}N^{r_{20}-1}+(-a_2)^+N^{\theta_2+\kappa_2-1}.
\]
Let \(V^{(1,N)}\) and \(V^{(2,N)}\) solve
\[
\left\{
\begin{aligned}
 V_t^{(1,N)}
 &=x_0+\int_0^t\bigl(\iota_1-d_{1,N}V_s^{(1,N)}\bigr)\dd s
 +\int_0^t\sqrt{2b_{11}V_s^{(1,N)}}\dd B_1(s),\\
 V_t^{(2,N)}
 &=y_0+\int_0^t\bigl(\iota_2-d_{2,N}V_s^{(2,N)}\bigr)\dd s
 +\int_0^t\sqrt{2b_{21}V_s^{(2,N)}}\dd B_2(s).
\end{aligned}
\right.
\]
Then
\begin{equation}
 X_t\ge V_t^{(1,N)},\qquad Y_t\ge V_t^{(2,N)},
 \qquad 0\le t<\tau_N^+,\quad\text{almost surely}.
 \label{eq:cir-generalized-comparison}
\end{equation}
\end{lemma}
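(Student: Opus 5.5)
The plan is to mirror the proof of \Cref{lem:localized-cir-comparison}: bound each drift in \eqref{eq:sqrt-system} from below by an affine function of the corresponding coordinate alone, valid on \([0,\tau_N^+)\), and then apply the one-dimensional comparison theorem \cite[Theorem~1.1 and Remark~1.1]{IkedaWatanabe1977}. Compared with the balanced predator--prey case, there are two changes. The interaction term may now carry either sign, and we use \(\theta_i\ge1\) instead of \(\kappa_1\ge1\). Because the interaction term is written as \(x^{\theta_1}\) times a power of the other coordinate, it can be linearized directly in the coordinate being compared, and no splitting near zero is needed.

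\textbf{Drift bounds.} Fix \(N>1+(x_0\vee y_0)\) and take \(t<\tau_N^+\), so that \(0\le X_t,Y_t< N\). Since \(r_{10}>1\), we have \(X_t^{r_{10}}\le N^{r_{10}-1}X_t\). For the interaction term in the first equation, argue by the sign of \(a_1\).
\begin{itemize}
\item If \(a_1\ge0\), the term \(a_1X_t^{\theta_1}Y_t^{\kappa_1}\) is nonnegative and can be dropped.
\item If \(a_1<0\), then \(\theta_1\ge1\) gives \(X_t^{\theta_1}\le N^{\theta_1-1}X_t\), and \(Y_t^{\kappa_1}\le N^{\kappa_1}\). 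Hence \(a_1X_t^{\theta_1}Y_t^{\kappa_1}\ge-(-a_1)^+N^{\theta_1+\kappa_1-1}X_t\).
\end{itemize}
In both cases the first drift is at least \(\iota_1-d_{1,N}X_t\). The same reasoning, using \(\theta_2\ge1\) and \(X_t^{\kappa_2}\le N^{\kappa_2}\), shows that the second drift is at least \(\iota_2-d_{2,N}Y_t\).

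\textbf{Comparison.} Each \(V^{(i,N)}\) is a CIR diffusion with positive immigration, so it has a pathwise unique nonnegative strong solution. It is driven by the same Brownian motion \(B_i\) as the \(i\)th coordinate of \(Z\), with the same diffusion coefficient \(\sqrt{2b_{i1}v}\). This coefficient is \(1/2\)-H\"older and therefore satisfies the Yamada--Watanabe condition needed for the comparison theorem. The drift of \(V^{(i,N)}\) is Lipschitz in its own variable, and the two processes start from the same initial value.

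To handle the dependence of \(X\)'s drift on \(Y\), view \(X\) as a one-dimensional semimartingale whose drift \(\beta_t\) is an adapted process satisfying \(\beta_t\ge\iota_1-d_{1,N}X_t\) on \([0,\tau_N^+)\). The comparison argument covers this situation. The standard Yamada--Watanabe proof shows that \(\E(V_{t\wedge\tau_N^+}^{(1,N)}-X_{t\wedge\tau_N^+})^+=0\): apply It\^o's formula to smooth approximations of \(x\mapsto x^+\), then use Gronwall's inequality. Since both processes are continuous, this gives \eqref{eq:cir-generalized-comparison} almost surely on \([0,\tau_N^+)\). The argument for \(Y\) is identical.

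\textbf{Main obstacle.} The drift bounds are routine, so the real difficulty is the comparison step. The cited theorem is stated for drifts depending only on the state, whereas here \(X\)'s drift depends on the other coordinate and the inequality is only needed up to a stopping time. I would handle this exactly as the paper does in \Cref{lem:localized-cir-comparison}: apply the theorem to the stopped processes, with the stopped drift of \(Z\) dominating the CIR drift pointwise. If the citation does not cover this directly, I would fall back on the explicit Yamada--Watanabe computation described above.
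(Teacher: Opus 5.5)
Your proposal is correct and follows the paper's proof. The paper also bounds both drifts on $[0,N]^2$ from below by $\iota_i-d_{i,N}$ times the compared coordinate (your sign split on $a_i$ is what the factor $(-a_i)^+$ encodes), then invokes the Ikeda--Watanabe comparison theorem exactly as in the preceding lemma. Your worry about the cross-coordinate dependence of the drift is unnecessary, since that theorem is formulated for adapted drift processes dominating a function of the state, but your explicit Yamada--Watanabe fallback on the stopped processes is also valid.
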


\begin{proof}
For \(0\le x,y\le N\),
\[
\begin{aligned}
 \iota_1+a_1x^{\theta_1}y^{\kappa_1}-b_{10}x^{r_{10}}
 &\ge\iota_1-(-a_1)^+N^{\theta_1+\kappa_1-1}x
                 -b_{10}N^{r_{10}-1}x
 =\iota_1-d_{1,N}x,\\
 \iota_2+a_2y^{\theta_2}x^{\kappa_2}-b_{20}y^{r_{20}}
 &\ge\iota_2-(-a_2)^+N^{\theta_2+\kappa_2-1}y
                 -b_{20}N^{r_{20}-1}y
 =\iota_2-d_{2,N}y.
\end{aligned}
\]
Similar to the proof of \Cref{lem:localized-cir-comparison}, the result follows.
\end{proof}

The preceding comparison lemmas and the boundary properties
of CIR diffusions yield positivity at each fixed time \eqref{eq:CIR-no-atom} and,
under \eqref{eq:Feller-thresholds}, simultaneously at all
positive times.

\begin{proposition} 
\label{prop:sqrt-boundary}
{Let \(Z=(X,Y)\) be a nonnegative, nonexplosive solution
of \eqref{eq:sqrt-system} starting from \((x_0,y_0)\in E\).}
\begin{enumerate}[label=\textup{(\roman*)}]
\item {If either {\(a_1>0>a_2\)},
\eqref{eq:balanced-interactions} holds, and
\(\kappa_1,\kappa_2>0\), or \(\theta_i\ge1\) for \(i=1,2\), then}
\begin{equation}
 \Pp(X_t>0,Y_t>0)=1,\qquad t>0.
 \label{eq:fixed-time-interior}
\end{equation}
\item {Assume that either {\(a_1>0>a_2\)},
\eqref{eq:balanced-interactions} holds, and \(\kappa_1\ge1\),
or \(\theta_i\ge1\) for \(i=1,2\).
If \eqref{eq:Feller-thresholds} holds, then}
\begin{equation}
 {\Pp(X_t>0,Y_t>0\text{ for every }t>0)=1.}
 \label{eq:pathwise-interior}
\end{equation}
\end{enumerate}
\end{proposition}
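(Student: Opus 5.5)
The plan is to deduce both parts from the localized CIR comparisons, \Cref{lem:localized-cir-comparison} and \Cref{lem:generalized-cir-comparison}, together with the CIR boundary facts \eqref{eq:CIR-no-atom} and \eqref{eq:CIR-pathwise-positive}, and then to remove the localization using nonexplosion. Fix \((x_0,y_0)\in E\) and \(N>1+(x_0\vee y_0)\). In each case of (i) one of the two lemmas supplies CIR processes \(U^{(1,N)},U^{(2,N)}\) with \(X_t\ge U^{(1,N)}_t\) and \(Y_t\ge U^{(2,N)}_t\) for \(t<\tau_N^+\). In the predator--prey case these come from \Cref{lem:localized-cir-comparison}, with immigration rates \(\iota_1\) and \(\iota_2/2\). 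In the case \(\theta_i\ge1\) they are the processes \(V^{(i,N)}\) of \Cref{lem:generalized-cir-comparison}, with rates \(\iota_1,\iota_2\). All immigration rates and diffusion constants are positive, so these are genuine CIR diffusions started from \(x_0\ge0\) or \(y_0\ge0\).

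For (i), fix \(t>0\). By \eqref{eq:CIR-no-atom}, \(\Pp(U^{(1,N)}_t=0)=\Pp(U^{(2,N)}_t=0)=0\). Hence
\[
\Pp(X_t=0\ \text{or}\ Y_t=0)\le \Pp(\tau_N^+\le t)+\Pp(U^{(1,N)}_t=0)+\Pp(U^{(2,N)}_t=0)=\Pp(\tau_N^+\le t).
\]
Since \(Z\) is nonexplosive and has c\`adl\`ag paths, \(\tau_N^+\uparrow\infty\) almost surely, so \(\Pp(\tau_N^+\le t)\to0\) as \(N\to\infty\). This gives \eqref{eq:fixed-time-interior}.

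For (ii), the comparison process must have immigration rate at least its diffusion constant, so that \eqref{eq:CIR-pathwise-positive} applies. Here the drift constant \(c\) in the CIR equation may be arbitrarily large; only \(a\ge b\) matters for the squared Bessel dimension.
\begin{itemize}
\item In the case \(\theta_i\ge1\), the processes \(V^{(i,N)}\) have immigration rates \(\iota_i\ge b_{i1}\) by \eqref{eq:Feller-thresholds}.
\item In the predator--prey case with \(\kappa_1\ge1\), we use \(U^{(1,N)}\) (rate \(\iota_1\ge b_{11}\)) and the strengthened process \(\widetilde U^{(2,N)}\) from \eqref{eq:cir-strengthened-comparison}, whose rate is \(\iota_2\ge b_{21}\). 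The process \(U^{(2,N)}\), with rate \(\iota_2/2\), may fail this threshold, which is exactly why the hypothesis \(\kappa_1\ge1\) is needed.
\end{itemize}
By \eqref{eq:CIR-pathwise-positive}, almost surely both comparison processes are positive at every \(t>0\). Therefore \(X_t>0\) and \(Y_t>0\) for all \(t\in(0,\tau_N^+)\) outside a null set \(\Omega_N\). Taking the union of the countably many null sets \(\Omega_N\), \(N\in\mathbb N\), and using \(\tau_N^+\uparrow\infty\) almost surely, we obtain \eqref{eq:pathwise-interior}.

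The main point needing care is the comparison step itself: the comparison theorem of \cite{IkedaWatanabe1977} is applied only up to \(\tau_N^+\), and the coefficients are merely H\"older-\(1/2\) in the diffusion. \Cref{lem:localized-cir-comparison,lem:generalized-cir-comparison} already deliver this, so the remaining work is the bookkeeping above. One should also note that when \(x_0=0\) or \(y_0=0\), both \eqref{eq:CIR-no-atom} and \eqref{eq:CIR-pathwise-positive} are stated for every \(u\ge0\). Boundary initial states therefore need no separate treatment.
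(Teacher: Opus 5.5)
Your proposal is correct and follows the paper's proof essentially step for step. You use the same localized CIR lower bounds, including $\widetilde U^{(2,N)}$ in place of $U^{(2,N)}$ in the predator--prey case of (ii) to meet the Feller threshold, together with \eqref{eq:CIR-no-atom} and \eqref{eq:CIR-pathwise-positive}, and you remove the localization via nonexplosion as the paper does.
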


\begin{proof}
\textup{(i)} Fix \(t>0\) and \(N>1+(x_0\vee y_0)\).
In the case \(a_1>0>a_2\), under \eqref{eq:balanced-interactions},
\eqref{eq:cir-localized-comparison} and \eqref{eq:CIR-no-atom} give
\[
 \Pp(X_t=0,\ \tau_N^+>t)
 \le\Pp(U_t^{(1,N)}=0)=0,
\]
and the same argument applies to \(Y_t\).
When \(\theta_i\ge1\), \(i=1,2\), these estimates follow
instead from \eqref{eq:cir-generalized-comparison}, using
\(V^{(1,N)}\) and \(V^{(2,N)}\).
Thus, in either case,
\[
 \Pp(X_t=0\text{ or }Y_t=0)\le\Pp(\tau_N^+\le t).
\]
Since the solution is nonexplosive, \(\lim_{N\to\infty}\tau_N^+=\infty\)
almost surely. Letting \(N\to\infty\) proves
\eqref{eq:fixed-time-interior}.

\textup{(ii)} In the case \(a_1>0>a_2\), use the lower
processes \(U^{(1,N)}\) and \(\widetilde U^{(2,N)}\) from
\eqref{eq:cir-localized-comparison} and
\eqref{eq:cir-strengthened-comparison}.
When \(\theta_i\ge1\), \(i=1,2\), use
\(V^{(1,N)}\) and \(V^{(2,N)}\) from
\eqref{eq:cir-generalized-comparison}.
In both cases, the lower CIR processes have immigration
rates \(\iota_i\) and diffusion coefficients
\(v\mapsto\sqrt{2b_{i1}v}\).
By \eqref{eq:Feller-thresholds} and \eqref{eq:CIR-pathwise-positive},
they remain positive at every positive time almost surely.
The corresponding comparisons therefore give
\begin{equation*}
 \Pp(X_t>0,Y_t>0\text{ for every }0<t<\tau_N^+)=1.
\end{equation*}
Letting \(N\to\infty\) and using nonexplosion proves
\eqref{eq:pathwise-interior}.
\end{proof}

For the remainder of this section, assume
\(\iota_1,\iota_2>0\), \eqref{eq:imm-regularity} and
\Cref{ass:weighted-dissipation},
with \(b_{12}=b_{22}=0\), \(r_{11}=r_{21}=1\), and
\(b_{11},b_{21}>0\). By \Cref{lem:imm-lyapunov}, the solution
of \eqref{eq:sqrt-system} is nonexplosive. For the next two lemmas,
write the drift and diffusion coefficients of
\eqref{eq:sqrt-system} as
\[
 b(x,y):=
 \begin{pmatrix}
 \iota_1+a_1{x^{\theta_1}y^{\kappa_1}}-b_{10}x^{r_{10}}\\
 \iota_2+a_2{y^{\theta_2}x^{\kappa_2}}-b_{20}y^{r_{20}}
 \end{pmatrix},
 \qquad
 \sigma(x,y):=
 \begin{pmatrix}
 \sqrt{2b_{11}x}&0\\
 0&\sqrt{2b_{21}y}
 \end{pmatrix}.
\]
On \(E^\circ\), both coefficients are smooth, and
\[
 \mathsf a(x,y):=\sigma(x,y)\sigma(x,y)^\top
 =\begin{pmatrix}
 2b_{11}x&0\\
 0&2b_{21}y
 \end{pmatrix}
\]
{is uniformly elliptic on every compact
\(K\subset E^\circ\), i.e., there exists \(\eta_K>0\) such that}
\[
 {w^\top\mathsf a(x,y)w\ge\eta_K|w|^2,
 \qquad (x,y)\in K,\quad w\in\R^2.}
\]
{Here \(\top\) denotes transpose.}  For an open set
\(G\subset E^\circ\) and
\((u,v)\in G\), let
\[
\tau_G^{(u,v)}:=\inf\{s\ge0:
 (X_s^{(u,v)},Y_s^{(u,v)})\notin G\}.
\]
Here \((X^{(u,v)},Y^{(u,v)})\) denotes the solution
{of \eqref{eq:sqrt-system}} starting from
\((u,v)\); {under \(\Pp_{u,v}\), we write
\(\tau_G:=\tau_G^{(u,v)}\).}

We next prove the strong Feller property on \(E^\circ\) by
localization to bounded domains.  We call \(E^\circ\) invariant if
\(P_t((x_0,y_0),E^\circ)=1\) for every \((x_0,y_0)\in E^\circ\)
and \(t\ge0\).  On this set, the strong Feller property means that
\(P_tf\in C_b(E^\circ)\) for every \(t>0\) and
\(f\in B_b(E^\circ)\).

\begin{lemma}\label{lem:sqrt-interior-strong-feller}
	 {Assume \eqref{eq:imm-regularity} and \eqref{eq:Feller-thresholds}.}
	Then \(E^\circ\) is invariant, and the restriction of
	\((P_t)_{t\ge0}\) to \(E^\circ\) is strong Feller.
\end{lemma}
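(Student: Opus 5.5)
The plan has two parts: show that $E^\circ$ is invariant, then prove the strong Feller property by localization. Throughout we use the standing assumptions of this section: $\iota_i>0$, \eqref{eq:imm-regularity}, \Cref{ass:weighted-dissipation}, square-root noise, and nonexplosion from \Cref{lem:imm-lyapunov}.

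\textbf{Invariance of $E^\circ$.} Under \eqref{eq:imm-regularity} we have $\theta_i\ge1$. Hence \Cref{prop:sqrt-boundary}(ii) applies and gives $\Pp_{x_0,y_0}(X_t>0,\ Y_t>0 \text{ for every } t>0)=1$ for every $(x_0,y_0)\in E$. In particular $P_t((x_0,y_0),E^\circ)=1$ for $(x_0,y_0)\in E^\circ$.

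\textbf{Strong Feller property.} Fix $t>0$, $f\in B_b(E^\circ)$ with $\|f\|_\infty\le1$, and $z\in E^\circ$. Exhaust $E^\circ$ by bounded smooth domains $G_n\Subset E^\circ$, for instance smoothed versions of $(n^{-1},n)^2$. On $\overline{G_n}$ the coefficients $b$ and $\sigma$ are smooth, and $\mathsf a$ is uniformly elliptic. Modify the coefficients outside $G_n$ to obtain globally smooth, bounded, uniformly elliptic coefficients $b^{(n)},\sigma^{(n)}$ that agree with $b,\sigma$ on $G_n$. By classical parabolic theory (Friedman, or Stroock--Varadhan), the diffusion $Z^{(n)}$ with these coefficients has a semigroup $P^{(n)}_t$ that is strong Feller on $\R^2$. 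By pathwise uniqueness, $Z$ and $Z^{(n)}$ coincide up to the exit time $\tau_{G_n}$. Therefore
\[
 |P_tf(w)-P^{(n)}_tf(w)|\le 2\,\Pp_w(\tau_{G_n}\le t),\qquad w\in G_n .
\]
Consequently, $P_tf$ is continuous at $z$ as soon as
\[
 \lim_{n\to\infty}\sup_{w\in B(z,r)}\Pp_w(\tau_{G_n}\le t)=0
\]
for some $r>0$. To see this, let $w_k\to z$. Then $\limsup_k|P_tf(w_k)-P_tf(z)|\le 4\sup_{w\in B(z,r)}\Pp_w(\tau_{G_n}\le t)$ for every $n$, and letting $n\to\infty$ finishes the argument. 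Boundedness of $P_tf$ is trivial.

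\textbf{Uniform exit estimate (main obstacle).} Exit through the upper part of the boundary is handled by \Cref{lem:imm-lyapunov}, which gives $\sup_{w\in K}\Pp_w(\tau_R^+\le t)\to0$ as $R\to\infty$. Exit through the lower part, near the axes, needs the comparison of \Cref{lem:generalized-cir-comparison}. Before $\tau_N^+$, each coordinate dominates a CIR process $V^{(i,N)}$ with immigration $\iota_i\ge b_{i1}$. Its starting value is $x_0$ or $y_0$, which lies in a compact subset of $(0,\infty)$ when $w\in\overline{B(z,r)}\subset E^\circ$. Then
\[
 \Pp_w\Bigl(\inf_{s\le t}X_s\le\varepsilon,\ \tau_N^+>t\Bigr)\le \Pp_{x_0}\Bigl(\inf_{s\le t}V^{(1,N)}_s\le\varepsilon\Bigr).
\]
For a CIR process with $\iota_1\ge b_{11}$, which never hits zero by \eqref{eq:CIR-pathwise-positive}, this probability tends to $0$ as $\varepsilon\downarrow0$. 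It does so uniformly in starting points from a compact interval, because the law depends continuously on the initial point and CIR processes are monotone in their initial condition (comparison theorem). So it suffices to use the smallest starting value. The same argument applies to $Y$ with $V^{(2,N)}$. To conclude, first choose $N$ so that the $\tau_N^+$ contribution is small, then $\varepsilon$ so that the axis contribution is small, and finally $n$ with $G_n\supset(\varepsilon,N)^2$. This yields the uniform exit estimate and hence the strong Feller property of $P_t$ restricted to $E^\circ$.
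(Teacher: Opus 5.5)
Your proof is correct. The invariance step and the overall localization scheme agree with the paper's: a smooth, uniformly elliptic modification of the coefficients, the strong Feller property of the modified semigroup, and the error bound $2\|f\|_\infty\Pp_w(\tau\le t)$.

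The difference is in how you control exit from the localizing domain for starting points near $z$. The paper uses only a single-point fact: from $(x_0,y_0)$ the exit times $\tau_{D_m}$ increase to $\infty$ almost surely, by \Cref{prop:sqrt-boundary}(ii) and nonexplosion. It transfers this to nearby initial points by an $L^2$ flow-stability estimate, using BDG and Gronwall with coefficients Lipschitz on $D'$. This gives $\Pp_{x_n,y_n}(\tau_{D'}\le t)\le\Pp_{x_0,y_0}(\tau_{D''}\le t)+C_{D',t}d^{-2}|(x_n,y_n)-(x_0,y_0)|^2$.

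You instead prove an exit bound that is uniform over a whole neighbourhood. Upper exit is handled by the uniform estimate in \Cref{lem:imm-lyapunov}. Lower exit is handled by the CIR minorants of \Cref{lem:generalized-cir-comparison} together with monotonicity of the CIR flow in its initial value. This reduces everything to non-attainment of zero by a single CIR process started at the smallest initial coordinate, which is where $\iota_i\ge b_{i1}$ enters via \eqref{eq:CIR-pathwise-positive}.

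Your route avoids the coupling and stability step and yields a quantitative, neighbourhood-uniform exit estimate. The cost is that it leans on model-specific structure: the coordinatewise CIR comparison and the monotonicity of one-dimensional square-root diffusions. The paper's argument is softer and essentially model-independent once pointwise non-exit from $E^\circ$ is known.

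One small remark: your appeal to continuous dependence of the CIR law on the initial point is unnecessary. Monotonicity alone already gives the uniformity over a compact interval of starting values.
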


\begin{proof}
Since \eqref{eq:imm-regularity} gives \(\theta_i\ge1\),
\(i=1,2\), \Cref{prop:sqrt-boundary}\textup{(ii)} implies that
\(E^\circ\) is invariant.
  Fix
\(t>0\), \(({x_0,y_0})\in E^\circ\), and
\((x_n,y_n)\to({x_0,y_0})\) in \(E^\circ\).
Let \((X^{(n)},Y^{(n)})\) and \(Z\) solve
\eqref{eq:sqrt-system} with initial states \((x_n,y_n)\) and
\((x_0,y_0)\), respectively.
Let \((D_m)_{m\ge1}\) be a sequence of bounded smooth domains such that
\(D_m\Subset D_{m+1}\Subset E^\circ\) and
\(\bigcup_{m\ge1}D_m=E^\circ\).
{By Proposition \ref{prop:sqrt-boundary} and
nonexplosion, the exit times \(\tau_m:=\tau_{D_m}^{(x_0,y_0)}\)
satisfy \(\tau_m\uparrow\infty\) almost surely as $m \rightarrow \infty$. Fix \(\varepsilon>0\). Choose \(m\) large
enough such that \((x_0,y_0)\in D_m\) and}
\begin{equation}
 \Pp_{{x_0,y_0}}(\tau_m\le t)<\varepsilon.
 \label{eq:exit-xy-small}
\end{equation}
Choose a bounded smooth domain \(D'\) with
\(\overline D_m\subset D'\Subset E^\circ\).
Choose a smooth function \(\chi\) with compact support in \(E^\circ\),
\(0\le\chi\le1\), and \(\chi=1\) on a neighborhood of
\(\overline{D'}\).  Set
\[
 \widetilde b:=\chi b,
 \qquad \widetilde\sigma:=\chi\sigma+(1-\chi)I_2
 \quad\text{on }E^\circ,
\]
and extend these coefficients by \(0\) and \(I_2\), respectively,
outside \(E^\circ\), where \(I_2\) is the \(2\times2\) identity 
matrix. Then \(\widetilde b,\widetilde\sigma\in C_b^\infty(\R^2)\),
and they agree with \(b,\sigma\) near \(\overline{D'}\).
  Since \(\sigma\) is
diagonal with positive entries on the compact support of \(\chi\),
there is a constant \(\lambda>0\) such that
\[
 \widetilde\sigma(w)\widetilde\sigma(w)^\top\ge\lambda I_2,
 \qquad w\in\R^2.
\]
For an initial state \((u,v)\in\R^2\), let
\((\widetilde X,\widetilde Y)\) solve
{
\begin{equation}
\begin{aligned}
 \begin{pmatrix}\widetilde X_t\\\widetilde Y_t\end{pmatrix}
 = \begin{pmatrix}u\\v\end{pmatrix}
 +\int_0^t\widetilde b(\widetilde X_s,\widetilde Y_s)\dd s +\int_0^t\widetilde\sigma(\widetilde X_s,\widetilde Y_s)
 \begin{pmatrix}\dd B_1(s)\\\dd B_2(s)\end{pmatrix},
\end{aligned}
 \label{eq:sqrt-modified-system}
\end{equation}
and denote its}
transition semigroup by \((\widetilde P_t)_{t\ge0}\).
  Then by the smoothness, boundedness, and uniform
ellipticity of these coefficients, one sees that
$\widetilde P_t$ maps $B_b(\R^2)$ to $C_b(\R^2)$ for every \(t>0\), see
\cite[Theorem~7.2.4]{StroockVaradhan2006}.

Choose a smooth domain \(D''\) with
\(\overline D_m\subset D''\Subset D'\).
{The coefficients are Lipschitz on \(\overline{D'}\).
By the Burkholder--Davis--Gundy inequality and Gronwall's
inequality, one obtains}
\begin{equation}
 \E\sup_{0\le s\le t\wedge\tau_{D'}^{(x_n,y_n)}
                         \wedge\tau_{D'}^{({x_0,y_0})}}
 \bigl|(X_s^{(n)},Y_s^{(n)})-Z_s\bigr|^2
 \le C_{D',t}\bigl|(x_n,y_n)-({x_0,y_0})\bigr|^2,
 \label{eq:interior-flow-stability}
\end{equation}
{where \(C_{D',t}>0\) is independent of \(n\).}
For all sufficiently large \(n\), \((x_n,y_n)\in D_m\).
On the event
\(\{\tau_{D'}^{(x_n,y_n)}\le t<\tau_{D''}^{({x_0,y_0})}\}\),
set \(s:=\tau_{D'}^{(x_n,y_n)}\).
Then \(s\le t<\tau_{D''}^{({x_0,y_0})}
\le\tau_{D'}^{({x_0,y_0})}\), since \(D''\subset D'\), and hence
\(s=t\wedge\tau_{D'}^{(x_n,y_n)}
\wedge\tau_{D'}^{({x_0,y_0})}\).
By continuity of the sample paths,
\[
 (X_s^{(n)},Y_s^{(n)})\in\partial D',
 \qquad Z_s\in D''.
\]
Therefore, on \(\{\tau_{D'}^{(x_n,y_n)}\le t<
\tau_{D''}^{(x_0,y_0)}\}\),
\beqnn
 \sup_{0\le r\le t\wedge\tau_{D'}^{(x_n,y_n)}
                        \wedge\tau_{D'}^{(x_0,y_0)}}
 \bigl|(X_r^{(n)},Y_r^{(n)})-(X_r,Y_r)\bigr|
 \ge\bigl|(X_s^{(n)},Y_s^{(n)})-Z_s\bigr| \ge d,
\eeqnn
where \(d:=\operatorname{dist}(\overline{D''},\partial D')>0\).
Consequently, by Markov's inequality
 and
\eqref{eq:interior-flow-stability}, for all sufficiently large
\(n\), we have
\begin{equation}
 \Pp_{x_n,y_n}(\tau_{D'}\le t)
 \le\Pp_{x_0,y_0}(\tau_{D''}\le t)
 +\frac{C_{D',t}}{d^2}|(x_n,y_n)-(x_0,y_0)|^2.
 \label{eq:nearby-exit-bound}
\end{equation}
Since \(D_m\subset D''\subset D'\), \eqref{eq:exit-xy-small} yields
\[
 \Pp_{x_0,y_0}(\tau_{D'}\le t)
 \le\Pp_{x_0,y_0}(\tau_{D''}\le t)
 \le\Pp_{x_0,y_0}(\tau_m\le t)<\varepsilon.
\]
Together with \eqref{eq:nearby-exit-bound}, this implies, for all
sufficiently large \(n\),
\begin{equation}
 \Pp_{x_n,y_n}(\tau_{D'}\le t)<2\varepsilon.
 \label{eq:nearby-exit-small}
\end{equation}
Let \(f\in B_b(E^\circ)\) and extend it to a bounded Borel function
\(\widetilde f\) on \(\R^2\) with the same supremum norm.
For a common initial state \((u,v)\in D'\), the solutions of
\eqref{eq:sqrt-system} and \eqref{eq:sqrt-modified-system} satisfy
\[
 Z_s=(\widetilde X_s,\widetilde Y_s),
 \qquad 0\le s\le\tau_{D'},
\]
by pathwise uniqueness and continuity, since
\(b=\widetilde b\) and \(\sigma=\widetilde\sigma\) on \(D'\).
Recall that \(\widetilde f=f\) on \(E^\circ\) and
\(\|\widetilde f\|_\infty=\|f\|_\infty\).
Thus, on \(\{\tau_{D'}\ge t\}\),
\(f(Z_t)=\widetilde f(\widetilde X_t,\widetilde Y_t)\).
Consequently,
\begin{equation}
\begin{aligned}
 |P_tf(u,v)-\widetilde P_t\widetilde f(u,v)|
 &=\left|\E_{u,v}\left[
 \bigl(f(Z_t)-\widetilde f(\widetilde X_t,\widetilde Y_t)\bigr)
 \1_{\{\tau_{D'}<t\}}\right]\right|\\
 &\le2\|f\|_\infty\,\Pp_{u,v}(\tau_{D'}\le t).
\end{aligned}
\label{eq:localized-semigroup-bound}
\end{equation}
Since \(\widetilde P_t\widetilde f\in C_b(\R^2)\),
\begin{align*}
  |P_tf(x_n,y_n)-P_tf(x_0,y_0)|&\le |P_tf(x_n,y_n)-\widetilde P_t\widetilde f(x_n,y_n)|\\
 &\quad+|\widetilde P_t\widetilde f(x_n,y_n)
            -\widetilde P_t\widetilde f(x_0,y_0)| +|\widetilde P_t\widetilde f(x_0,y_0)-P_tf(x_0,y_0)|.
\end{align*}
The middle term tends to zero as \(n\to\infty\). By
\eqref{eq:exit-xy-small}, \eqref{eq:nearby-exit-small}, and
\eqref{eq:localized-semigroup-bound}, the first and last terms
are bounded by \(4\varepsilon\|f\|_\infty\) and
\(2\varepsilon\|f\|_\infty\), respectively, for all sufficiently
large \(n\). Hence
\[
 \limsup_{n\to\infty}|P_tf(x_n,y_n)-P_tf(x_0,y_0)|
 \le6\varepsilon\lVert f\rVert_\infty.
\]
Letting \(\varepsilon\downarrow0\) proves the assertion.
\end{proof}

The next lemma shows that every nonempty open subset of \(E^\circ\) is reached with positive probability at each positive time.

\begin{lemma} 
\label{lem:sqrt-interior-irreducibility}
For every \(t>0\), \((x_0,y_0)\in E^\circ\), and nonempty open
set \(O\subset E^\circ\), we have \(P_t((x_0,y_0),O)>0\).
\end{lemma}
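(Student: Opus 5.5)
The plan is to use the Stroock--Varadhan support theorem for the localized, uniformly elliptic system \eqref{eq:sqrt-modified-system}, and then transfer the conclusion back to $Z$ by pathwise agreement before the exit time from a bounded domain. Fix $t>0$, $(x_0,y_0)\in E^\circ$, and a nonempty open $O\subset E^\circ$. Pick a point $w_1\in O$ and a ball $B(w_1,r)\subset O$. Join $(x_0,y_0)$ to $w_1$ by a smooth path $\phi:[0,t]\to E^\circ$, with $\phi(0)=(x_0,y_0)$ and $\phi(t)=w_1$; a straight segment suffices, since $E^\circ$ is convex. Then choose a bounded smooth domain $D'$ with $\phi([0,t])\cup\overline{B(w_1,r)}\subset D'\Subset E^\circ$. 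Let $\delta>0$ be smaller than both $r$ and $\operatorname{dist}(\phi([0,t]),\partial D')$.

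Construct $\widetilde b,\widetilde\sigma$ exactly as in the proof of \Cref{lem:sqrt-interior-strong-feller}, with the cutoff $\chi$ equal to $1$ near $\overline{D'}$. These coefficients lie in $C_b^\infty(\R^2)$, agree with $b,\sigma$ near $\overline{D'}$, and satisfy $\widetilde\sigma\widetilde\sigma^\top\ge\lambda I_2$. Since $\widetilde\sigma$ is invertible everywhere, the control system
\[
\dot\psi=\widetilde b(\psi)+\widetilde\sigma(\psi)h,\qquad h\in L^2([0,t];\R^2),
\]
can follow $\phi$ exactly by taking $h:=\widetilde\sigma(\phi)^{-1}\bigl(\dot\phi-\widetilde b(\phi)\bigr)$, which is smooth and bounded. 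In the Stratonovich form required by the support theorem the drift changes by a smooth bounded correction term, but invertibility of $\widetilde\sigma$ still allows the same choice of $h$, with the correction absorbed. By the support theorem (Stroock--Varadhan), the law of $(\widetilde X,\widetilde Y)$ started at $(x_0,y_0)$ gives positive probability to the tube $\{\sup_{s\le t}|(\widetilde X_s,\widetilde Y_s)-\phi(s)|<\delta\}$.

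On this tube the path stays inside $D'$ up to time $t$. By pathwise uniqueness, as used in \eqref{eq:localized-semigroup-bound}, $Z_s=(\widetilde X_s,\widetilde Y_s)$ for $s\le t$ on the event $\{\tau_{D'}>t\}$. Both processes can be driven by the same Brownian motions and stopped at the same exit time from $D'$, so this event has the same probability under the two laws. On the tube we also have $Z_t\in B(w_1,\delta)\subset O$. Hence $P_t((x_0,y_0),O)\ge\Pp(\text{tube})>0$.

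The main obstacle is the careful invocation of the support theorem. It requires Itô-to-Stratonovich conversion of the coefficients, which is smooth here, and we need the tube event to coincide for the original and modified systems. Localization to $D'$ handles the second point. Alternatively, one could avoid the support theorem entirely: use the strictly positive transition density of the uniformly elliptic operator with smooth coefficients killed on exiting $D'$ (strong maximum principle / Harnack), which gives positivity of $\widetilde P^{D'}_t((x_0,y_0),O)$ and hence of $P_t((x_0,y_0),O)$. I would try the support-theorem route first and keep the killed-density argument as a backup.
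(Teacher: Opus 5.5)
Your proposal is correct and follows essentially the same route as the paper's proof. Both arguments take a straight path to a point of $O$ and localize to a bounded smooth domain using the extended coefficients $\widetilde b,\widetilde\sigma$. They build the control by inverting $\sigma$ along $\phi$ against the Stratonovich drift $b-\tfrac12(b_{11},b_{21})^\top$. The support theorem (the paper cites Millet--Sanz-Sol\'e) then gives positive probability of the $\delta$-tube, and local pathwise uniqueness transfers this event back to $Z$, so the killed-density backup is not needed.
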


\begin{proof}
Fix \(({x_0,y_0})\in E^\circ\), a nonempty open set
\(O\subset E^\circ\), and \(t>0\).
Choose \(q\in O\) and set
\[
 \phi(s):=\left(1-\frac{s}{t}\right)(x_0,y_0)
             +\frac{s}{t}q,\qquad 0\le s\le t.
\]
Define
\[
b^{\mathrm S}(x,y)
:=b(x,y)-\frac12\begin{pmatrix}b_{11}\\b_{21}\end{pmatrix}.
\]
Since \(b_{11},b_{21}>0\), the matrix \(\sigma(x,y)\)
is invertible on \(E^\circ\). Define
\begin{equation}
 u(s):=\sigma(\phi(s))^{-1}
       \bigl(\dot\phi(s)-b^{\mathrm S}(\phi(s))\bigr) 
 \label{eq:support-control}
\end{equation}
By compactness, there exist \(0<\varepsilon<N<\infty\) such that
\(\phi([0,t])\subset[\varepsilon,N]^2\).
The inverse of \(\sigma\) is continuous and bounded on
\([\varepsilon,N]^2\).
  Moreover, both \(\dot\phi\) and
\(b^{\mathrm S}\circ\phi\) are continuous and bounded on \([0,t]\).
Hence \(u\) is bounded, and therefore
\[
 \int_0^t |u(s)|^2\dd s
 \le t\sup_{0\le s\le t}|u(s)|^2<\infty.
\]
 By \eqref{eq:support-control}, 
\[
 {\dot\phi(s)
 =b^{\mathrm S}(\phi(s))+\sigma(\phi(s))u(s),
 \qquad 0\le s\le t.}
\]
Choose a bounded domain \(D\) with smooth boundary such that
\(\phi([0,t])\subset D\Subset E^\circ\).
As in the proof of \Cref{lem:sqrt-interior-strong-feller},
construct coefficients \(\widetilde b,\widetilde\sigma\in
C_b^\infty(\R^2)\), with uniformly elliptic covariance,
that agree with \(b,\sigma\) near \(\overline D\).
Let \(\widetilde Z := (\widetilde X,\widetilde Y)\) denote the solution to 
\eqref{eq:sqrt-modified-system} with 
initial state \((x_0,y_0)\), and the Brownian motions
\(B_1,B_2\) driving \(Z\).
Choose \(\delta>0\) such that
\[
 \delta<\operatorname{dist}\bigl(\phi([0,t]),\R^2\setminus D\bigr),
 \qquad B(q,\delta)\subset O.
\]
The path \(\phi\) satisfies the controlled equation for the
extended coefficients as well. Thus the support theorem
\cite[Theorem~3.5]{MilletSanzSole1994}, applied with
\(h(s):=\int_0^s u(r)\dd r\) and a deterministic rescaling
of \([0,t]\) to \([0,1]\), gives
\begin{equation}
 \Pp_{{x_0,y_0}}\left(
 \sup_{0\le s\le t}
 |(\widetilde X_s,\widetilde Y_s)-\phi(s)|<\delta
 \right)>0.
 \label{eq:support-event}
\end{equation}
On the event in \eqref{eq:support-event},
\(\widetilde Z_s\in B(\phi(s),\delta)\subset D\)
for every \(0\le s\le t\). Since the coefficients agree on \(D\), local pathwise uniqueness
gives \(Z_s= \widetilde Z_s\) for all
\(s\in[0,t]\) on this event. In particular,
\[
 Z_t= \widetilde Z_t 
 \in B(\phi(t),\delta)=B(q,\delta)\subset O.
\]
The event in \eqref{eq:support-event} has positive probability.
Hence \(P_t((x_0,y_0),O)>0\).
\end{proof}

We now use \Cref{prop:sqrt-boundary},
\Cref{lem:sqrt-interior-strong-feller,lem:sqrt-interior-irreducibility},
and \Cref{thm:imm-existence}
and \Cref{cor:stationary-balance}
to prove \Cref{cor:sqrt-main}.

\begin{proof}[Proof of \Cref{cor:sqrt-main}]
\textup{(i)}
The square-root model \eqref{eq:sqrt-system} is the special case of
\eqref{eq:model} with \(r_{11}=r_{21}=1\) and \(b_{12}=b_{22}=0\).
Existence and the moment bounds follow from
\Cref{thm:imm-existence}. Since
\(r_{11}=r_{21}=1<r_{i0}\), \(i=1,2\),
\eqref{eq:stationary-balance} follows by \Cref{cor:stationary-balance}.
To prove the remaining assertions, let \(\pi\) be
any invariant probability and fix \(t>0\).
By \Cref{prop:sqrt-boundary}\textup{(i)},
\(P_t((x_0,y_0),E^\circ)=1\) for every
\((x_0,y_0)\in E\). Consequently,
\[
 \pi(E^\circ)
 =\int_EP_t((x,y),E^\circ)\pi(\dd x,\dd y)=1.
\]
Let \(G=E\cap O\ne\varnothing\), where \(O\) is open in \(\R^2\).
Since \(E^\circ\) is dense in \(E\), \(G\cap E^\circ\) is a nonempty open
subset of \(E^\circ\). Recall that \(\pi(E^\circ)=1\). By
\Cref{lem:sqrt-interior-irreducibility},
\[
 \pi(G)\ge\pi(G\cap E^\circ)
 =\int_{E^\circ}P_t((u,v),G\cap E^\circ)\pi(\dd u,\dd v)>0
\]
for any \(t>0\). Since \(G\) was arbitrary, we conclude that
\(\supp\pi=E\).

 \textup{(ii)}
We first prove uniqueness. Existence follows from \textup{(i)}.
Let \(\pi_1\) and \(\pi_2\) be invariant probabilities on \(E\).
By \textup{(i)}, both assign mass one to \(E^\circ\).
Since \(E^\circ\) is invariant by
\Cref{lem:sqrt-interior-strong-feller}, both probabilities are
invariant for the restricted semigroup on \(E^\circ\).
The restricted semigroup is strong Feller by
\Cref{lem:sqrt-interior-strong-feller}. It is also irreducible:
\Cref{lem:sqrt-interior-irreducibility} gives
\(P_t(z,O)>0\) for every \(t>0\), \(z\in E^\circ\),
and nonempty open \(O\subset E^\circ\). Thus every
\(P_t(z,\cdot)\) has full support in \(E^\circ\).
To apply Doob's uniqueness theorem, we show that, for each
\(t>0\), the measures \(P_{2t}(z,\cdot)\), \(z\in E^\circ\),
are mutually absolutely continuous. Fix a Borel set
\(A\subset E^\circ\). If
\(P_{2t}((x,y),A)=0\) for some \((x,y)\in E^\circ\), then
\[
 0=\int_{E^\circ}P_t((u,v),A)\,P_t((x,y),\dd u,\dd v).
\]
The integrand is continuous and nonnegative by the strong Feller
property, while \(P_t((x,y),\cdot)\) has full support on
\(E^\circ\). Hence \(P_t((u,v),A)=0\) for every
\((u,v)\in E^\circ\). It follows that all the measures
\(P_{2t}((u,v),\cdot)\), \((u,v)\in E^\circ\), are
equivalent. By \cite[Theorem~1, p.~84]{KulikScheutzow2015},
the chain obtained by observing the process at times
\(0,2t,4t,\ldots\) has at most one invariant probability.
Both \(\pi_1\) and \(\pi_2\) are invariant for this chain,
so \(\pi_1=\pi_2\).

We next prove convergence of the time-averaged transition laws.
Let \(\pi\) be the unique invariant probability.
By \Cref{lem:imm-time-averages}, the family
\(\{\Pi_T^{(x_0,y_0)}:T\ge1\}\) is tight, and every weak
subsequential limit is invariant. Hence every such limit equals
\(\pi\), and \(\Pi_T^{(x_0,y_0)}\Rightarrow\pi\) as
\(T\to\infty\).
\end{proof}

\appendix
\section{Existence, uniqueness, and boundary continuation}
\label{sec:appendix}

\begin{proposition}
\label{prop:axis-realization}
Assume that \(\iota_1=\iota_2=0\), {\(a_1,a_2\in\R\)},
and \eqref{eq:boundary-compatible} holds.
Then, for every initial state \((x_0,y_0)\in E\), the construction
in \Cref{def:coordinatewise} defines the CWP \(Z\) as a strong Markov process with lifetime \(\tau_\infty\).
If \eqref{eq:model} is source-conservative, then
\(\Pp_{x_0,y_0}(\tau_\infty=\infty)=1\) for every
\((x_0,y_0)\in E\).
\end{proposition}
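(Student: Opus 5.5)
The plan is to build the CWP in three stages and then attach a strong Markov structure to the pieces. The stages are: pathwise unique solutions of \eqref{eq:model} on \([0,\tau_0\wedge\tau_\infty)\) from interior states; pathwise unique solutions of the one-dimensional axis equations in \Cref{def:coordinatewise}, started at the boundary value; and concatenation at \(\tau_0\) using the same driving noises shifted in time.

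\textbf{Interior stage.} For \((x_0,y_0)\in E^\circ\) and each \(n\), I would replace the coefficients of \eqref{eq:model} by versions that agree with the originals on \([1/n,n]^2\) and are globally Lipschitz. In the jump part the relevant quantity is the integrand \(u\mapsto\1_{\{u\le b_{i2}x^{r_{i2}}\}}z\), for which the standard \(L^1\) estimate for Poisson integrals with Lipschitz rate functions applies, as in Dawson--Li type constructions. This yields a unique strong c\`adl\`ag solution up to the exit time from \([1/n,n]^2\). Pathwise uniqueness makes the solutions consistent in \(n\), so they paste to a solution on \([0,\tau_0\wedge\tau_\infty)\), and along the way this gives the stopped \(L^1\)-continuity estimate cited elsewhere.

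\textbf{Boundary value.} On \(\{\tau_0<\tau_\infty\}\) one needs \(Z_{\tau_0}=\lim_{t\uparrow\tau_0}Z_t\) to exist. On this event the path stays bounded near \(\tau_0\). Each coordinate is the sum of a drift integral with bounded integrand, a continuous local martingale with bounded quadratic variation rate, and a compensated jump integral with bounded rate. Each of these converges as \(t\uparrow\tau_0<\infty\), for instance by martingale convergence for semimartingales stopped at the exit from a bounded box. So the left limit exists, and one coordinate of it is zero.

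\textbf{Axis stage and concatenation.} If \(Y_{\tau_0}=0<X_{\tau_0}\), the surviving equation is one-dimensional with coefficients \(b_{10}x^{r_{10}}\), \(b_{11}x^{r_{11}}\), \(b_{12}x^{r_{12}}\), all vanishing at \(0\) by \eqref{eq:boundary-compatible}. Localizing on \([1/n,n]\) gives a unique solution until it hits \(0\) or explodes. At \(0\) the solution is absorbed, which is consistent because every coefficient vanishes there. I would drive this equation with the noises shifted by \(\tau_0\): by the strong Markov property of Brownian motion and Poisson random measures, \(B_i(\tau_0+\cdot)-B_i(\tau_0)\) and the shifted \(N_i\) are again noises of the same law, independent of \(\mathcal F_{\tau_0}\). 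The case \(X_{\tau_0}=0<Y_{\tau_0}\) is symmetric, and the origin is absorbing. Killing at \(\tau_\infty\) is handled by the usual cemetery convention. Boundary initial states are covered by the same axis construction with \(\tau_0=0\).

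\textbf{Strong Markov property and nonexplosion.} The main obstacle is the strong Markov property of the concatenated process. I would prove it via pathwise uniqueness and Yamada--Watanabe. Uniqueness in law holds for the interior equation stopped at each \([1/n,n]^2\) exit and for each localized axis equation. For a stopping time \(\sigma\), the regular conditional law of \(Z_{\sigma+\cdot}\) given \(\mathcal F_\sigma\) is then the law of the construction started at \(Z_\sigma\). This uses that after time \(\sigma\) the process solves the same piecewise-defined equations driven by shifted noise, and that the switching time \(\tau_0\circ\theta_\sigma\) equals \((\tau_0-\sigma)^+\). Measurability in the initial state follows from the pasting of measurable solution maps.

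Finally, suppose \eqref{eq:model} is source-conservative. Then explosion cannot occur before or at \(\tau_0\). After \(\tau_0\) the surviving coordinate follows a one-dimensional equation with nonpositive drift \(-b_{10}x^{r_{10}}\), so \(x\) is a nonnegative local supermartingale along the localizing sequence. Optional stopping gives \(\Pp(\sup_{s\le t}X_s\ge N)\le X_{\tau_0}/N\to0\), so the axis dynamics cannot explode either. Hence \(\Pp_{x_0,y_0}(\tau_\infty=\infty)=1\) for every \((x_0,y_0)\in E\).
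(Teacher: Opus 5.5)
Your proposal is correct and follows essentially the paper's route. The paper likewise truncates and pastes pathwise unique solutions up to $\tau_0\wedge\tau_\infty$, restarts the absorbed one-dimensional axis equations from $Z_{\tau_0}$ with shifted noises, obtains the strong Markov property from uniqueness, and gets nonexplosion from source-conservativeness plus conservativeness of the axis dynamics. The differences are minor: the paper cites Ethier--Kurtz, Theorem~4.4.2, where you sketch a shift and pathwise-uniqueness argument, and it cites Li--Yang--Zhou for axis conservativeness, where you prove it directly with the supermartingale bound.

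One detail to tighten: the small-jump $L^1$ estimate of Yamada--Watanabe type needs the truncated jump rates to be nondecreasing in the state, not merely Lipschitz. The natural choice $b_{i2}\bigl((u\vee n^{-1})\wedge n\bigr)^{r_{i2}}$, which is the truncation the paper uses, has this property.
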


\begin{proof}
	First suppose that \((x_0,y_0)\in E^\circ\). For all sufficiently
	large integers \(n\), let
	\(p_n(u):=(u\vee n^{-1})\wedge n\) and
	\(D_n:=(n^{-1},n)^2\).
	Replace each state variable \(u\) in \eqref{eq:model} by
	\(p_n(u)\). The truncated drift is bounded and Lipschitz. Moreover, for \(u,v\in\R\) and \(i=1,2\), we have
	\(b_{i2}|p_n(u)^{r_{i2}}-p_n(v)^{r_{i2}}|
	\le C_n|u-v|\) and
	\[
	\bigl|\sqrt{2b_{i1}p_n(u)^{r_{i1}}}
	-\sqrt{2b_{i1}p_n(v)^{r_{i1}}}\bigr|^2
	\le C_n|u-v|^2.
	\]
	Since \(\int_0^\infty(z^2\wedge z)\mu_i(\dd z)<\infty\), by \cite[Lemma~A.1]{JiXiongXuYang2026}, there exists a
	pathwise unique \(\R^2\)-valued strong c\`adl\`ag truncated
	solution \(Z^{(n)}=(X^{(n)},Y^{(n)})\).
	Construct these solutions with the same initial state and
	driving noises. For \(m\ge n\), let
	\(\sigma_n^{(m)}:=\inf\{t\ge0:Z_t^{(m)}\notin D_n\}\).
	Then by pathwise uniqueness, we have $
	\sigma_n^{(m)}=\sigma_n^{(n)}=:\sigma_n,$ and $
Z_{t\wedge\sigma_n}^{(m)}=Z_{t\wedge\sigma_n}^{(n)}$ for $t \ge 0.$
	Consequently, $
	\sigma_n\uparrow\tau_0\wedge\tau_\infty$ and $Z_t:=Z_t^{(n)}$ if $t<\sigma_n$, which
	defines the pathwise unique strong c\`adl\`ag  solution.
	
	For \(i=1,2\) and \(u\ge0\), the one-dimensional localization
	in \cite[Theorem~3.1(i)]{LiYangZhou2019}, with absorption at
	zero, gives the axis process \(U^{(i)}\) satisfying
	\[
		U_t^{(i)}= u-b_{i0}\int_0^t(U_s^{(i)})^{r_{i0}}\dd s
		+\int_0^t\sqrt{2b_{i1}(U_s^{(i)})^{r_{i1}}}\dd B_i(s) +\int_0^t\int_0^\infty
		\int_0^{b_{i2}(U_{s-}^{(i)})^{r_{i2}}}
		z\,\widetilde N_i(\dd s,\dd z,\dd v),
	\]
	up to its explosion time. In fact, by
	\cite[Theorem~2.8(i)]{LiYangZhou2019}, one can see that the axis processes are conservative.
On \(\{\tau_0<\tau_\infty\}\), we restart the axis equations
from \(X_{\tau_0}\) and \(Y_{\tau_0}\) by using the shifted
driving noises, and define
\[
Z_{\tau_0+t}:=(U_t^{(1)},U_t^{(2)}),\qquad t\ge0.
\]
Since zero is absorbing for each axis process, this gives
the continuation in \Cref{def:coordinatewise}.
Boundary initial states are treated with \(\tau_0=0\),
and the origin $(0, 0)$ is absorbing. On
\(\{\tau_\infty\le\tau_0\}\), the process is killed at
\(\tau_\infty\).
By \cite[Theorem~4.4.2]{EthierKurtz1986}, by the uniqueness in law
for the interior and absorbed axis processes, together
with localization, there is a Borel strong Markov family
with lifetime \(\tau_\infty\).
If \eqref{eq:model} is source-conservative, the conservativeness
of the axis processes implies that the continuation is
conservative. This completes the proof.
\end{proof}

\begin{proposition}
\label{prop:imm-wellposed}
Let {\(a_1,a_2\in\R\)}. For \(i=1,2\), suppose that
\(\iota_i\ge0\), \(b_{i0}>0\), \(r_{i0}>1\),
\(\theta_i,\kappa_i\ge1\), and
\(r_{ij}\ge1\) whenever \(b_{ij}>0\), \(j=1,2\).
Then, for every \((x_0,y_0)\in E\),
equation \eqref{eq:model} admits a unique
\(E\)-valued strong c\`adl\`ag solution on
\([0,\tau_\infty)\).
\end{proposition}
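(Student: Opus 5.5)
The plan is to follow the truncation-and-localization scheme already used for the interior in the proof of \Cref{prop:axis-realization}, now on all of \(E\). Immigration keeps the solution in \(E\), and the lower exponent bounds \(\theta_i,\kappa_i,r_{ij}\ge1\) make every coefficient locally Lipschitz up to the boundary. For \(n\ge1\), let \(p_n(u):=(u\vee0)\wedge n\) and replace every state variable in \eqref{eq:model} by \(p_n(\cdot)\).

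First, I check the regularity of the truncated coefficients.
\begin{itemize}
\item Since \(\theta_i,\kappa_i\ge1\) and \(r_{i0}>1\), the drift \(\iota_1+a_1p_n(x)^{\theta_1}p_n(y)^{\kappa_1}-b_{10}p_n(x)^{r_{10}}\) is bounded and Lipschitz on \(\R^2\). The same holds for the second drift.
\item Since \(r_{i2}\ge1\), the jump rate \(b_{i2}p_n(u)^{r_{i2}}\) is Lipschitz.
\item For the diffusion, \(r_{i1}\ge1\) gives \(|\sqrt{p_n(u)^{r_{i1}}}-\sqrt{p_n(v)^{r_{i1}}}|^2\le C_n|u-v|\). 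This is the Yamada--Watanabe type H\"older-\(1/2\) bound, rather than a Lipschitz one, because of the square-root behaviour at zero when \(r_{i1}=1\).
\end{itemize}
So I would invoke \cite[Lemma~A.1]{JiXiongXuYang2026}, or the Dawson--Li/Li--Pu framework for noises of this type, to obtain a pathwise unique strong c\`adl\`ag solution \(Z^{(n)}\). I expect that lemma to allow such a H\"older diffusion coefficient together with a monotone, Lipschitz jump rate, since \(\int(z\wedge z^2)\mu_i(\dd z)<\infty\).

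Second, I prove that \(Z^{(n)}\) stays nonnegative. On \(\{X^{(n)}=0\}\) the drift equals \(\iota_1\ge0\) and both the diffusion and the jump rate vanish. Jumps are positive. The standard argument applies Itô--Tanaka to \(x^-\) (or a smooth approximation \(\phi_k(x)\uparrow x^-\)). It shows that \(\E(X^{(n)}_t)^-\) satisfies a Gronwall inequality with zero initial value, so it vanishes. The same argument handles \(Y^{(n)}\).

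Third, I glue the truncations. Let \(\sigma_n:=\inf\{t:X^{(n)}_t\vee Y^{(n)}_t\ge n\}\). Pathwise uniqueness gives \(Z^{(m)}=Z^{(n)}\) on \([0,\sigma_n]\) for \(m\ge n\), so \(\sigma_n\) increases. The jump process stopped at \(\sigma_n\) may overshoot \(n\), but it is still consistent. Defining \(Z_t:=Z^{(n)}_t\) for \(t<\sigma_n\) gives a solution on \([0,\tau_\infty)\) with \(\tau_\infty=\lim\sigma_n\). Uniqueness follows because any \(E\)-valued solution, stopped at its exit from \([0,n)^2\), solves the \(n\)-th truncated equation.

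Along the way I record the stopped \(L^1\) estimate \eqref{eq:stopped-L1-continuity} used in \Cref{lem:imm-weak-feller}. It comes from the same Yamada--Watanabe computation applied to \(|X^{(n)}-X'^{(n)}|\): the diffusion term disappears in the limit of the approximating functions, and the jump and drift terms are Lipschitz. This yields \(\E|Z_{t\wedge\rho}-Z'_{t\wedge\rho}|_1\le e^{C_Rt}|z-z'|_1\).

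The main obstacle is the pathwise uniqueness step for the jump part. The jump rate enters through the \(u\)-integration limit, so the difference of the two compensated integrals involves \(\int z\,|\1_{u\le g(X)}-\1_{u\le g(X')}|\). For this I would use the Li--Pu technique: take \(\phi_k\) with \(\phi_k''\) supported on \([a_k,a_{k-1}]\), bound the jump contribution by \(\int (z\wedge z^2)\mu\,|g(X)-g(X')|\), and use the monotonicity of \(g\).
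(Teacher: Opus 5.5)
Your scheme matches the paper's proof in every step but one:
\begin{itemize}
\item the same truncation $p_n(u)=u^+\wedge n$;
\item nonnegativity from the fact that on $\{u\le0\}$ every coefficient vanishes except the drift $\iota_i\ge0$ (your smooth approximation of $x^-$ is equivalent to the paper's Tanaka formula with a zero-local-time criterion);
\item pathwise uniqueness and the stopped bound \eqref{eq:stopped-L1-continuity} via a Yamada--Watanabe/Li--Mytnik computation, using the H\"older-$1/2$ diffusion and the monotone Lipschitz jump rate;
\item patching along the exit times.
\end{itemize}

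The step that is not justified is \emph{existence} for the truncated system. You rest it on \cite[Lemma~A.1]{JiXiongXuYang2026}, which you only expect to tolerate a H\"older diffusion coefficient. The one place this paper uses that lemma is the proof of \Cref{prop:axis-realization}. There it truncates away from zero precisely so that $|\sqrt{2b_{i1}p_n(u)^{r_{i1}}}-\sqrt{2b_{i1}p_n(v)^{r_{i1}}}|^2\le C_n|u-v|^2$ holds. With $p_n(u)=u^+\wedge n$, this Lipschitz bound fails near $0$ whenever $r_{i1}<2$, not only for $r_{i1}=1$, so you cannot assume the lemma applies. Your fallback, the Dawson--Li and Li--Pu theorems, are stated for one-dimensional equations, not for a two-dimensional system coupled through the drift. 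Your pathwise-uniqueness computation, although correct, does not by itself produce a solution.

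The missing ingredient is weak existence followed by a Yamada--Watanabe argument, which is how the paper proceeds:
\begin{enumerate}
\item The truncated drift, covariance and jump rates are bounded and continuous, and $\int_0^\infty(z^2\wedge z)\,\mu_i(\dd z)<\infty$. So \cite[Theorem~2.2]{Stroock1975} gives a solution of the martingale problem.
\item \cite[Theorem~2.3]{Kurtz2011} turns this into a weak c\`adl\`ag solution of \eqref{eq:imm-truncated-model}.
\item Combined with the $L^1$ pathwise-uniqueness estimate, \cite[Theorems~1 and~2]{BarczyLiPap2015} yield a pathwise unique strong solution.
\end{enumerate}
Alternatively, you could approximate the diffusion coefficient by Lipschitz ones and show, by the same Yamada--Watanabe estimate, that the approximating solutions are Cauchy in $L^1$. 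With this step supplied, the rest of your argument goes through.
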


\begin{proof}
	Fix \((x_0,y_0)\in E\). For \(m\ge1\), let
	\(p_m(u):=u^+\wedge m\) and consider the truncated equation
	\begin{equation}
		\left\{
		\begin{aligned}
			X_t^{(m)}={}&x_0+\int_0^t
			\bigl[\iota_1+a_1p_m(X_s^{(m)})^{\theta_1}
			p_m(Y_s^{(m)})^{\kappa_1}
			-b_{10}p_m(X_s^{(m)})^{r_{10}}\bigr]\dd s\\
			&+\int_0^t
			\sqrt{2b_{11}p_m(X_{s-}^{(m)})^{r_{11}}}\dd B_1(s)
			+\int_0^t\int_0^\infty
			\int_0^{b_{12}p_m(X_{s-}^{(m)})^{r_{12}}}
			z\,\widetilde N_1(\dd s,\dd z,\dd u),\\
			Y_t^{(m)}={}&y_0+\int_0^t
			\bigl[\iota_2+a_2p_m(Y_s^{(m)})^{\theta_2}
			p_m(X_s^{(m)})^{\kappa_2}
			-b_{20}p_m(Y_s^{(m)})^{r_{20}}\bigr]\dd s\\
			&+\int_0^t
			\sqrt{2b_{21}p_m(Y_{s-}^{(m)})^{r_{21}}}\dd B_2(s)
			+\int_0^t\int_0^\infty
			\int_0^{b_{22}p_m(Y_{s-}^{(m)})^{r_{22}}}
			z\,\widetilde N_2(\dd s,\dd z,\dd u).
		\end{aligned}
		\right.
		\label{eq:imm-truncated-model}
	\end{equation}
The truncated drift, covariance matrix and jump rates are
bounded and continuous, and
\(\int_0^\infty(z^2\wedge z)\mu_i(\dd z)<\infty\).
By \cite[Theorem~2.2]{Stroock1975}, the martingale
problem associated with \eqref{eq:imm-truncated-model}
has a solution on \(D([0,\infty),\R^2)\). By
\cite[Theorem~2.3]{Kurtz2011}, on an enlarged
probability space, there exists an adapted c\`adl\`ag weak solution
\(Z^{(m)}=(X^{(m)},Y^{(m)})\) of
\eqref{eq:imm-truncated-model}.
	For each active Brownian coefficient and \(u,v\in[0,R]\),
	\beqnn
		\bigl|\sqrt{2b_{i1}u^{r_{i1}}}
		-\sqrt{2b_{i1}v^{r_{i1}}}\bigr|^2
		\le2b_{i1}|u^{r_{i1}}-v^{r_{i1}}|
		\le C_R|u-v|.
		\eeqnn
	Moreover, there exists a constant $C_m > 0$ such that $
	2b_{i1}p_m(u)^{r_{i1}}\le C_m u^+$ for $u \in \R$.
	Since \(\int_{0+}(C_m u)^{-1}\dd u=\infty\), by the
	zero-local-time criterion, one can check that
	\(L_t^0(X^{(m)})=L_t^0(Y^{(m)})=0\).
	On \(\{X_{s-}^{(m)}\le0\}\), the noise coefficients vanish
	and the drift equals \(\iota_1\).
	All jumps of \(X^{(m)}\) are nonnegative, and no jump occurs
	when \(X_{s-}^{(m)}\le0\).
By Tanaka's
	formula, almost surely for every \(t\ge0\),
	\begin{align*}
		0\le(X_t^{(m)})^-
		&=(x_0)^-
		-\int_0^t\1_{\{X_{s-}^{(m)}\le0\}}\dd X_s^{(m)}
		+\frac12L_t^0(X^{(m)})\\
		&\quad+\sum_{0<s\le t}
		\Bigl[(X_s^{(m)})^--(X_{s-}^{(m)})^-
		+\1_{\{X_{s-}^{(m)}\le0\}}
		(X_s^{(m)}-X_{s-}^{(m)})\Bigr]\\
		&=-\iota_1\int_0^t
		\1_{\{X_{s-}^{(m)}\le0\}}\dd s\le0.
	\end{align*}
	Thus \((X_t^{(m)})^-=0\) for every \(t\ge0\), almost surely.
	The same argument gives \((Y_t^{(m)})^-=0\).	Thus \(Z^{(m)}\) takes values in \(E\).
	
	We next prove pathwise uniqueness. The truncated drift is
	globally Lipschitz, and, for \(u,v\in\R\),
	\[
	b_{i2}|p_m(u)^{r_{i2}}-p_m(v)^{r_{i2}}|
	\le C_m|u-v|.
	\]
	Let \(Z^{(m)}\) and \(\widetilde Z^{(m)}\) be two solutions
	of \eqref{eq:imm-truncated-model} driven by the same noises, and let $\Delta X^{(m)}:=X^{(m)}-\widetilde X^{(m)},$ $
	\Delta Y^{(m)}:=Y^{(m)}-\widetilde Y^{(m)}$ and $D_t^{(m)}:=|\Delta X_t^{(m)}|+|\Delta Y_t^{(m)}|.$
	By the same zero-local-time criterion, one can check that
	\(L_t^0(\Delta X^{(m)})=L_t^0(\Delta Y^{(m)})=0\).
	Applying Tanaka's formula and taking expectations, as in
	\cite[Proposition~3.1]{LiMytnik2011}, we have
	\[
	\E D_t^{(m)}\le D_0^{(m)}
	+C_m\int_0^t\E D_s^{(m)}\dd s.
	\]
It follows that $\E D_t^{(m)}\le\e^{C_mt}D_0^{(m)}$.
	By \cite[Theorems~1 and~2]{BarczyLiPap2015},
	\eqref{eq:imm-truncated-model} therefore has a pathwise unique
	adapted strong c\`adl\`ag solution for every \(m\).
	
	For later use, the same argument applies to two solutions
	\(Z=(X,Y)\) and \(\widetilde Z=(\widetilde X,\widetilde Y)\)
	of \eqref{eq:model}, driven by the same noises, with initial
	states \((x_0,y_0)\) and \((\widetilde x_0,\widetilde y_0)\).
	For
	\(R>x_0\vee y_0\vee\widetilde x_0\vee\widetilde y_0\), let
	\[
	\sigma_R:=\inf\{t\ge0:
	X_t\vee Y_t\vee\widetilde X_t\vee\widetilde Y_t\ge R\},
	\qquad D_t:=|X_t-\widetilde X_t|+|Y_t-\widetilde Y_t|.
	\]
	By Gronwall's inequality, it gives that
	\begin{equation}
		\E D_{t\wedge\sigma_R}\le\e^{C_Rt}D_0,
		\qquad t\ge0.
		\label{eq:stopped-L1-continuity}
	\end{equation}
	In particular, pathwise uniqueness holds up to the explosion
	time. 
As in the proof of \Cref{prop:axis-realization},
we construct all \(Z^{(m)}\) with the same initial state and
driving noises, and $\rho_n^{(m)}:=\inf\{t\ge0:
X_t^{(m)}\vee Y_t^{(m)}\ge n\}$ for $m\ge n>x_0\vee y_0.$
By \eqref{eq:stopped-L1-continuity},
\(\rho_n^{(m)}=\rho_n^{(n)}\) and the stopped solutions
are consistent. The same patching argument shows that $
\tau_\infty:=\lim_{n\to\infty}\rho_n^{(n)}$ and $Z_t:=Z_t^{(n)}$ if $t<\rho_n^{(n)}$
defines an \(E\)-valued strong c\`adl\`ag solution of
\eqref{eq:model} on \([0,\tau_\infty)\). Then
pathwise uniqueness follows from
\eqref{eq:stopped-L1-continuity} by letting \(R\to\infty\).
\end{proof}

\end{document}